\documentclass{article}
\usepackage[dvipsnames]{xcolor}
\usepackage{gastex}
\usepackage{amsmath}
\usepackage{amssymb}
\usepackage{upref}
\usepackage{multirow}
\usepackage{multicol}
\usepackage{url}

\usepackage{theorem}
\newtheorem{theorem}{Theorem}
\newtheorem{lemma}[theorem]{Lemma}
\newtheorem{proposition}[theorem]{Proposition}
\newtheorem{corollary}[theorem]{Corollary}
{\theorembodyfont{\rmfamily}%
  \newtheorem{example}[theorem]{Example}
   }
\newenvironment{proof}{\noindent\textit{Proof.}}
{\QED\vskip\theorempostskipamount} 
\newenvironment{proofof}[1]{\noindent\textit{Proof
    \protect{#1}.}}
                       {\QED\vskip\theorempostskipamount}
\def\petitcarre{\vrule height4pt width 4pt depth0pt}
\def\QED{\relax\ifmmode\eqno{\hbox{\petitcarre}}\else{%
  \unskip\nobreak\hfil\penalty50\hskip2em\hbox{}\nobreak\hfil
  \petitcarre
  \parfillskip=0pt \finalhyphendemerits=0\par\smallskip}
  \fi}

\newcommand\A{\mathcal{A}}
\newcommand\B{\mathcal{B}}

\newcommand\RR{\mathcal{R}}

\newcommand{\Z}{\mathbb{Z}}

\let\edge\xrightarrow
\def\un(#1){\underline{#1}\,}
\DeclareMathOperator{\Card}{Card}

\definecolor{ivoire}{rgb}{0.99,0.99,0.8}
\usepackage{calc}
\newcounter{hours}\newcounter{minutes}
\newcommand\computetime{\setcounter{hours}{\time/60}%
  \setcounter{minutes}{\time-\value{hours}*60}%
  \thehours\,h\,\theminutes}
\newcommand\dateandtime{\today\quad\computetime}
\numberwithin{theorem}{section}
\numberwithin{equation}{section}
\numberwithin{figure}{section}
\numberwithin{table}{section}
\title{Acyclic, connected and tree sets}
\author{Val\'erie Berth\'e$^1$, Clelia De Felice$^2$, 
Francesco Dolce$^3$, Julien Leroy$^4$,\\
 Dominique Perrin$^3$,
Christophe  Reutenauer$^5$,
Giuseppina Rindone$^3$\\\\
$^1$CNRS, Universit\'e Paris 7,
$^2$Universit\`a degli Studi di Salerno,\\
$^3$Universit\'e Paris Est, LIGM,
$^4$Universit\'e du Luxembourg,\\
 $^5$Universit\'e du Qu\'ebec \`a Montr\'eal}
\date{\dateandtime}

\makeatletter
\def\@listI{%
  \leftmargin\leftmargini
  \setlength{\parsep}{0pt plus 1pt minus 1pt}
  \setlength{\topsep}{2pt plus 1pt minus 1pt}
  \setlength{\itemsep}{0pt}
}
\let\@listi\@listI
\@listi
\def\@listii {%
  \leftmargin\leftmarginii
  \labelwidth\leftmarginii
  \advance\labelwidth-\labelsep
  \setlength{\topsep}{0pt plus 1pt minus 1pt}
}
\def\@listiii{%
  \leftmargin\leftmarginiii
  \labelwidth\leftmarginiii
  \advance\labelwidth-\labelsep
  \setlength{\topsep}{0pt plus 1pt minus 1pt}
  \setlength{\parsep}{0pt} 
  \setlength{\partopsep}{1pt plus 0pt minus 1pt}
}
\makeatother

\begin{document}

\maketitle

\begin{abstract}
Given a set $S$ of words,
one associates to each word $w$ in $S$ an undirected graph, called its extension
graph, and which describes the possible extensions of $w$ in $S$
on the left and on the right.
We investigate the family of sets of words defined
by the property of the extension graph of each word in the set
to be acyclic or connected or a tree. We exhibit for this family
various connexions between word combinatorics, bifix codes, group automata and
free groups. We prove that in a uniformly
recurrent tree set, the sets of first return words are bases
of the free group on the alphabet. Concerning acyclic sets,
we prove as a main result that a set $S$ is
acyclic if and only if any  bifix code included in $S$ is a basis
of the subgroup that it generates.
\end{abstract}
\paragraph{Keywords} combinatorics on words; combinatorial group theory; symbolic dynamics
\paragraph{AMS-Classification-Numbers} 05A Enumerative Combinatorics,
37B Topological Dynamics
\tableofcontents
\section{Introduction}
This paper studies properties of  classes of sets which occur
as the set of factors of infinite words
of linear factor complexity. It is part of a series of
papers devoted to this subject initiated
in~\cite{BerstelDeFelicePerrinReutenauerRindone2012}. These classes of sets,
called acyclic, connected or tree sets,
are defined by a limitation to the possible two-sided
extensions of a word of the set. We will see that Sturmian sets
 are tree sets (by Sturmian we mean
the sets of factors of strict episturmian words, also called
Arnoux-Rauzy words). Moreover, the sets obtained by
coding a regular interval exchange set are also tree sets
(see \cite{BertheDeFeliceDolceLeroyPerrinReutenauerRindone2013}).
Any word $w$ in a tree set is neutral in the sense that the number
of pairs $(a,b)$ of letters such that $awb\in S$ is equal to
the number of letters $a$ such that $aw\in S$ plus the number
of letters $b$ such that $wb\in S$ minus $1$. We express this property
saying that it is a neutral set.

We study sets of first return words in a tree set $S$.
Our main result on return words is that
if $S$ is a uniformly recurrent
tree set containing the alphabet $A$, the set of first return words
to any word of $S$ is a basis of the free group on $A$
(Theorem~\ref{corollaryJulien}  referred to as the Return Theorem).
 For this, we use Rauzy graphs, obtained by restricting
  de Bruijn graphs to the set of vertices formed by the words of
given length in a set $S$. We first show
that if $S$ is a recurrent connected set  containing the alphabet $A$, the group described by
any Rauzy graph of $S$
with respect to some vertex is the free group
on $A$ (Theorem~\ref{proposition3}). Next, we prove that
in a uniformly recurrent connected set $S$ containing $A$,
the set of first return words to any word in $S$ generates the free
group 
on $A$ (Theorem~\ref{theoremJulien}). The proof uses the fact
that in a uniformly recurrent neutral set $S$ containing the alphabet $A$,
the number of first return words to any word of $S$ is equal to
$\Card(A)$,
a result obtained in~\cite{BalkovaPelantovaSteiner2008}.

We also study bifix codes in acyclic sets.
Our main result is that a set $S$ is acyclic if and only if
any bifix code contained in $S$ 
is a basis of the subgroup that it generates
(Theorem~\ref{basisTheorem} referred to as the Freeness Theorem). This is related to the
main result of~\cite{BerstelDeFelicePerrinReutenauerRindone2012},
referred to as the Finite Index Basis Theorem,
proving that, in a Sturmian set $S$, a finite bifix code is
$S$-maximal of $S$-degree $d$ if and only if it is a basis
of a subgroup of index $d$. This result is generalized
in~\cite{BertheDeFeliceDolceLeroyPerrinReutenauerRindone2013} to uniformly
recurrent tree sets. The proof uses the results of this paper and,
in particular the Return Theorem (Theorem~\ref{corollaryJulien}).
In the case of an acyclic set, the subgroup generated by a bifix code need
not be of finite index, even if the bifix code is $S$-maximal
(and even if the set $S$ is uniformly recurrent, see
Example~\ref{exampleBasisJulien}).

We also prove a more technical result. We say that a submonoid $M$
of the free monoid is saturated in a set $S$ if the subgroup $H$
of the free group generated by $M$ satisfies $M\cap S=H\cap S$.
We prove that if $S$ is acyclic, the submonoid generated by a bifix
code
contained in $S$ is saturated in $S$ (Theorem~\ref{saturationTheorem}
referred to as the Saturation Theorem). This property plays an
important role in the proof of the Finite Index Basis Theorem.

Our paper is organized as follows. 

In
Section~\ref{sectionPreliminaries}
we present the definitions and basic properties used in the paper.
We introduce strong, weak and
neutral sets.  We 
 prove a result
on the cardinality of sets of first return words 
(Theorem~\ref{theoremCardReturn}) which
is a generalization of a result from~\cite{BalkovaPelantovaSteiner2008}.

In Section~\ref{sectionAcyclic}, we define the extension graph
of a word with respect to a set $S$. This notion appears
already in~\cite{PelantovaStarosta2014} with a purpose  similar to
ours. We define acyclic, connected
and tree sets
by the corresponding property of the extension graph of each
word in the set to be acyclic, connected or a tree. We also
introduce more general extension
graphs where left (resp. right) extensions are relative to a finite
suffix (resp. prefix) code. We prove that in acyclic sets,
these more general extension graphs  are
also acyclic (Proposition~\ref{PropStrongTreeCondition}).

In Section~\ref{sectionReturnTreeSets}, we study sets of first return words in tree sets. We first show
that if $S$ is a recurrent connected set containing the alphabet $A$,
 the group described by
any Rauzy graph of $S$,
with respect to some vertex is the free group
on $A$ (Theorem~\ref{proposition3}). Next, we prove that
in a uniformly recurrent connected set $S$
containing $A$, the set of first return words
to any word of $S$ generates the free group on $A$
(Theorem~\ref{theoremJulien}). We use Theorem~\ref{theoremCardReturn}
to prove that if $S$ is additionally acyclic, then every set
of first return words is a basis of the free group on $A$
(Theorem~\ref{corollaryJulien}).

In Section~\ref{sectionMainResult} we state and prove our main results
(Theorem~\ref{basisTheorem} and Theorem~\ref{saturationTheorem}).
The proof uses the notion of incidence graph of a bifix code
(already introduced
in~\cite{BerstelDeFelicePerrinReutenauerRindone2012}).

\begin{figure}[hbt]
\gasset{Nmr=0,Nadjust=wh}
\begin{picture}(120,30)(0,-5)
\node(BS)(15,20)
{\begin{tabular}{c}Bifix codes and \\ Sturmian words~\cite{BerstelDeFelicePerrinReutenauerRindone2012}\end{tabular}}
\node(ACT)(15,0){\begin{tabular}{c}Acyclic, connected\\ and tree sets\end{tabular}}
\node(BIE)(55,0){\begin{tabular}{c}Bifix codes and\\
 interval exchanges~\cite{BertheDeFeliceDolceLeroyPerrinReutenauerRindone2014}\end{tabular}}
\node(FIB)(60,20){\begin{tabular}{c}The finite \\index basis property~\cite{BertheDeFeliceDolceLeroyPerrinReutenauerRindone2013}\end{tabular}}
\node(MBD)(105,20){\begin{tabular}{c}Maximal bifix\\
    decoding~\cite{BertheDeFeliceDolceLeroyPerrinReutenauerRindone2013b}\end{tabular}}
\node(NCLI)(100,0){\begin{tabular}{c}Natural coding\\ of linear involutions~\cite{BertheDolceLeroyPerrinReutenauerRindone2013c}\end{tabular}}
\drawedge(BS,ACT){}
\drawedge(ACT,FIB){}\drawedge(ACT,BIE){}
\drawedge(FIB,MBD){}
\drawedge(BIE,NCLI){}
\drawedge(FIB,BIE){}
\drawedge(FIB,NCLI){}
\end{picture}
\end{figure}
Some  results used in this paper are proved in our first paper
\cite{BerstelDeFelicePerrinReutenauerRindone2012}. In turn, the
results of this paper are used in other papers in preparation
on similar objects. We include for clarity the logical
dependency between these papers.

\paragraph{Ackowledgement} This work was supported by grants from
Region Ile-de-France, the ANR projects Dyna3S and Eqinocs, the FARB Project
``Aspetti algebrici e computazionali nella teoria dei codici,
degli automi e dei linguaggi formali'' (University of Salerno, 2013)
and the MIUR PRIN 2010-2011 grant
``Automata and Formal Languages: Mathematical and Applicative Aspects''.

The authors thank the referee for his suggestions which helped
to improve the presentation of our paper.

\section{Preliminaries}\label{sectionPreliminaries}
In this section, we first recall some definitions concerning words,
codes and automata (see~\cite{BerstelPerrinReutenauer2009}
 for a more complete presentation).
We give the definition of recurrent and uniformly recurrent
sets of words. We also give the definitions and basic
properties of bifix codes (see~\cite{BerstelDeFelicePerrinReutenauerRindone2012} for a more detailed
presentation). We define basic notions concerning automata.
We present the class of reversible automata and its connection
with the Stallings automaton of a subgroup of a free group.
We finally introduce strong, weak and neutral sets
and state some results concerning the factor complexity of these sets.
 We also introduce return words and we
 recall a result from~\cite{BalkovaPelantovaSteiner2008}
on the cardinality of sets of first return words 
(Theorem~\ref{theoremCardReturn}) we is used later.

\subsection{Recurrent sets}
Let $A$ be a finite nonempty alphabet. All words considered below,
unless
stated explicitly, are supposed to be on the alphabet $A$.
We denote by $A^*$ the set of all words on $A$.
We denote by $1$ or by $\varepsilon$ the empty word.
We denote by $|x|$ the length of a word $x$.
A set of words is said to be \emph{factorial} if it contains the
factors of its elements.

For a set $X$ of words and a word $u$, we denote
\begin{displaymath}
u^{-1}X=\{v\in A^*\mid uv\in X\}.
\end{displaymath}
the right \emph{residual} of $X$ with respect to $u$.

Let $S$ be a set of words on the alphabet $A$.
 For $w\in S$,
we denote
\begin{displaymath}
L(w)=\{a\in A\mid aw\in S\},\quad
R(w)=\{a\in A\mid wa\in S\}
\end{displaymath}
\begin{displaymath}
E(w)=\{(a,b)\in A\times A\mid awb\in S\}
\end{displaymath}
and further $\ell(w)=\Card(L(w))$, $r(w)=\Card(R(w))$, $e(w)=\Card(E(w))$.

A word $w$ is \emph{right-extendable} if $r(w)>0$,
\emph{left-extendable} if $\ell(w)>0$ and \emph{biextendable} if
$e(w)>0$. A 
factorial set
$S$ is called \emph{right-extendable}
(resp. \emph{left-extendable}, resp. \emph{biextendable}) if every word in $S$ is
right-extendable (resp. left-extendable, resp. biextendable).

A word $w$ is called
\emph{right-special}
if $r(w)\ge 2$. It is called \emph{left-special} if $\ell(w)\ge 2$.
It is called \emph{bispecial} if it is both right and left-special.

A set of words $S\ne\{1\}$ is \emph{recurrent} if it is factorial and if for every
$u,w\in S$ there is a $v\in S$ such that $uvw\in S$. A recurrent set
 is biextendable.

A set of words $S$ is said to be \emph{uniformly recurrent} if it is
right-extendable and if, for any word $u\in S$, there exists an integer $n\ge
1$
such that $u$ is a factor of every word of $S$ of length $n$.
A uniformly recurrent set is recurrent.

A \emph{morphism} $f:A^*\rightarrow B^*$ is a monoid morphism from
$A^*$ into $B^*$. If $a\in A$ is such that the word $f(a)$ begins with
$a$ and if $|f^n(a)|$ tends to infinity with $n$, there is a unique
infinite word denoted $f^\omega(a)$ which has all words $f^n(a)$
as prefixes. It is called a \emph{fixpoint} of the morphism $f$.

A morphism $f:A^*\rightarrow A^*$ is called \emph{primitive} if there
is an integer $k$ such that for all $a,b\in A$, the letter $b$
appears in $f^k(a)$. If $f$ is a primitive morphism, the set
of factors of any fixpoint
of $f$ is uniformly recurrent (see~\cite{PytheasFogg2002}
Proposition 1.2.3 for example).

An infinite word is \emph{episturmian} if the set of its factors 
is closed under reversal and contains for each $n$ at most one word
of length $n$ which is right-special (see~\cite{BerstelDeFelicePerrinReutenauerRindone2012} for more references). It is a \emph{strict episturmian} word
if it has exactly one right-special word of each length and
moreover each right-special factor $u$ is such that $r(u)=\Card(A)$.


A \emph{Sturmian set} is a set of words which 
is the set of factors of a strict episturmian word.
Any Sturmian set is uniformly recurrent (see~\cite{BerstelDeFelicePerrinReutenauerRindone2012}).
\begin{example}\label{exampleFibonacci}
Let $A=\{a,b\}$.
The \emph{Fibonacci morphism} is the morphism
 $f:A^*\rightarrow A^*$ defined by $f(a)=ab$ and $f(b)=a$.
The \emph{Fibonacci word}
\begin{displaymath}
x=abaababaabaababaababa\ldots
\end{displaymath}
is the fixpoint $x=f^\omega(a)$ of the
Fibonacci morphism.
It is a Sturmian word (see~\cite{Lothaire2002}). The set $F(x)$ of factors of $x$ is the
\emph{Fibonacci set}.
\end{example}
\begin{example}\label{exampleTribonacci}
Let $A=\{a,b,c\}$.
The Tribonacci word 
\begin{displaymath}
x=abacabaabacababacabaabacaba\cdots
\end{displaymath}
is the fixpoint $x=f^\omega(a)$ of the morphism
$f:A^*\rightarrow A^*$ defined by $f(a)=ab$, $f(b)=ac$, $f(c)=a$.
It is a strict episturmian word (see~\cite{JustinVuillon2000}).
The set $F(x)$ of factors of $x$ is the \emph{Tribonacci set}.
\end{example}

We fix our notation concerning free groups
(see~\cite{LyndonSchupp2001} for
example).

We denote by $F_A$ the free group on the alphabet $A$. It is
identified with
the set of all words on the alphabet $A\cup A^{-1}$ which are
\emph{reduced}, in the sense that they do not have any factor $aa^{-1}$
or $a^{-1}a$ for $a\in A$. Note that the exponent $-1$
used in this context should not be confused with the one used to define
the residual of a set of words. We extend the bijection
$a\mapsto a^{-1}$ to an involution on $A\cup A^{-1}$ by
defining $(a^{-1})^{-1}=a$.

For any word $w$ on $A\cup A^{-1}$ there is a unique reduced
word equivalent to $w$ modulo the relations $aa^{-1}\equiv a^{-1}a\equiv 1$
for $a\in A$. If $u$ is the reduced word
equivalent to $w$, we say that $w$ \emph{reduces} to $u$
and we denote $w\equiv u$. We also denote $u=\rho(w)$.
The product of two elements $u,v\in F_A$
is the reduced word $w$ equivalent to $uv$, namely $\rho(uv)$.

For a set $X$ of reduced words, we denote $X^{-1}=\{x^{-1}\mid x\in X\}$.

\subsection{Bifix codes}

A \emph{prefix code} is a set of nonempty words which does not contain any
proper prefix of its elements. A suffix code is defined symmetrically.
A  \emph{bifix code} is a set which is both a prefix code and a suffix
code.

We denote by $X^*$ the submonoid generated by a set $X$ of words. The submonoid $M$
generated
by a prefix code satisfies the following property: if $u,uv\in M$,
then $v\in M$. Such a submonoid is said to be \emph{right unitary}.
The definition of a left unitary submonoid is symmetric
and the submonoid generated by a suffix code is left unitary.
Conversely, any right unitary (resp. left unitary) submonoid of $A^*$
is 
generated by a unique prefix code (resp. suffix code)
(see~\cite{BerstelPerrinReutenauer2009}).

A \emph{coding morphism} for a prefix code $X\subset A^+$ is a morphism
$f:B^*\rightarrow A^*$ which maps bijectively $B$ onto $X$ (note that in this paper we use $\subset$ to denote
the inclusion allowing equality).

Let $S$ be a set of words. A prefix code $X\subset S$ is $S$-maximal 
if it is not properly contained in any prefix code 
$Y\subset S$.

A set $X\subset S$ is \emph{right $S$-complete} if any word of $S$
is a prefix of a word in $X^*$. 

For a factorial set $S$, a prefix code is $S$-maximal if
and only if it is right $S$-complete (Proposition 3.3.2
in~\cite{BerstelDeFelicePerrinReutenauerRindone2012}).

Similarly a bifix code $X\subset S$ is $S$-maximal if it is
not properly contained in a bifix code $Y\subset S$.
For a recurrent set $S$, a finite bifix code is $S$-maximal as a bifix code if
and only if it is an $S$-maximal prefix code 
(see~\cite{BerstelDeFelicePerrinReutenauerRindone2012}, Theorem
4.2.2). For a uniformly recurrent set $S$, any finite bifix code 
$X\subset S$ is contained in a finite $S$-maximal bifix code
(Theorem 4.4.3 in~\cite{BerstelDeFelicePerrinReutenauerRindone2012}).

A \emph{parse} of a word $w$ with respect to a bifix code $X$ is
a triple $(v,x,u)$ such that $w=vxu$ where $v$ has no suffix in $X$,
$u$ has no prefix in $X$ and $x\in X^*$.
We denote by $d_X(w)$ the number of parses of $w$.
By definition, the $S$-\emph{degree} of $X$, denoted $d_X(S)$, is the maximal number
of parses of a word in $S$.  It can be finite or infinite.

Let $X$ be a bifix code.
The number of parses of a word $w$ is also equal to the number
of suffixes of $w$ which have no prefix in $X$ and to the
 number of prefixes of $w$ which have no suffix in $X$
(see Proposition 6.1.6 in~\cite{BerstelPerrinReutenauer2009}).

The set of \emph{internal factors} of a set of words $X$,
denoted $I(X)$
is the set of words $w$ such that 
there exist nonempty words $u,v$ with $uwv\in X$.

Let $S$ be a recurrent set and let
 $X$ be a finite  bifix code.
By Theorem 4.2.8 in~\cite{BerstelDeFelicePerrinReutenauerRindone2012},
$X$ is $S$-maximal if and only if its $S$-degree $d$ is finite. Moreover, in
this case,
a word $w\in S$ is such that
$d_X(w)< d$ if and only if it is an internal factor of $X$,
that is
\begin{displaymath}
I(X)=\{w\in S\mid d_X(w)<d\}.
\end{displaymath}
In particular, any word of $X$ of maximal length has $d$ parses.

\begin{example}\label{exampleUniform}
Let $S$ be a recurrent set. For any integer $n\ge 1$, the set
$S\cap A^n$ is an $S$-maximal bifix code of $S$-degree $n$.
\end{example}

\subsection{Automata and groups}\label{sectionAutomata}
We denote $\A=(Q,i,T)$ a deterministic automaton with a  set
$Q$  of
states,
$i\in Q$ as initial state and $T\subset Q$ as set of terminal states.
For $p\in Q$ and $w\in A^*$, we denote $p\cdot w=q$ if
 there is a path labeled $w$ from $p$ to the state $q$ and $p\cdot
 w=\emptyset$
otherwise. The automaton is \emph{finite} when $Q$ is finite.

The set \emph{recognized}\index{recognized by an automaton} by the
automaton is the set of words $w\in A^*$ such that $i\cdot w\in T$. 

All automata considered in this paper are deterministic and we simply
call them  `automata' to mean `deterministic automata'.

The automaton $\A$ is \emph{trim}\index{automaton!trim}\index{trim
  automaton} if for any $q\in Q$, there is a path from $i$ to $q$ and
a path from $q$ to some $t\in T$.

An automaton is called
\emph{simple}\index{automaton!simple}\index{simple automaton} if it is
trim and if it has a unique terminal state which coincides with the
initial state.
The set recognized by a simple automaton is a right unitary
submonoid. Thus it is generated by a
prefix code.

An automaton $\A=(Q,i,T)$ is \emph{complete}%
\index{automaton!complete}\index{complete automaton}
if for any state $p\in Q$
and
any letter $a\in A$, one has $p\cdot a\ne\emptyset$.

For a nonempty set $L\subset A^*$, we denote by $\A(L)$ the \emph{minimal
  automaton}\index{automaton!minimal}\index{minimal automaton}
 of $L$. The states of $\A(L)$ are the nonempty residuals
$u^{-1}L$ for $u\in A^*$.  
For $u\in A^*$ and $a\in A$, one defines $(u^{-1}L)\cdot a=(ua)^{-1}L$.
The initial state
is the set $L$ itself and the terminal states are the sets $u^{-1}L$ for
$u\in L$.


Let $X$ be a prefix code and let $P$ be the set of proper prefixes of
$X$. The \emph{literal automaton}\index{literal
  automaton}\index{automaton!literal} of $X^*$ is the simple automaton
$\A=(P,1,1)$ with transitions defined for $p\in P$ and $a\in A$ by
\begin{displaymath}
  p\cdot a=
  \begin{cases}
    pa&\text{if $pa\in P$}\,,\\
    1&\text{if $pa\in X$}\,,\\
    \emptyset&\text{otherwise}.
  \end{cases}
\end{displaymath}
One verifies that this automaton recognizes $X^*$.
Thus for any prefix code $X\subset A^*$, there is a simple automaton
$\A=(Q,1,1)$ which recognizes $X^*$. Moreover, the minimal
automaton of $X^*$ is simple. Note that
the literal automaton is not minimal in
general (see Example~\ref{exampleLiteral}).
\begin{example}\label{exampleLiteral}
Let $X=\{aa,ab,bba,bbb\}$. The literal and the minimal automata
of $X^*$ are represented in Figure~\ref{figLiteralMinimal}
(the initial state is indicated by an incoming arrow and the terminal
states
by an outgoing one).
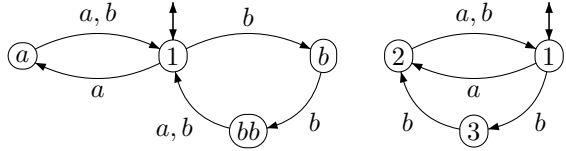
\begin{figure}[hbt]
\centering
\gasset{Nadjust=wh}
\begin{picture}(70,20)
\put(0,0){
\begin{picture}(30,20)
\node(2)(0,10){$a$}
\node[Nmarks=if,iangle=90,fangle=90](1)(20,10){$1$}
\node(3)(40,10){$b$}\node(4)(30,0){$bb$}

\drawedge[curvedepth=3](1,2){$a$}\drawedge[curvedepth=3](2,1){$a,b$}
\drawedge[curvedepth=3](1,3){$b$}
\drawedge[curvedepth=3](3,4){$b$}\drawedge[curvedepth=3](4,1){$a,b$}
\end{picture}
}
\put(50,0){
\begin{picture}(40,20)
\node(2)(0,10){$2$}
\node[Nmarks=if,iangle=90,fangle=90](1)(20,10){$1$}
\node(3)(10,0){$3$}

\drawedge[curvedepth=3](1,2){$a$}\drawedge[curvedepth=3](2,1){$a,b$}
\drawedge[curvedepth=3](1,3){$b$}\drawedge[curvedepth=3](3,2){$b$}

\end{picture}
}
\end{picture}
\caption{The literal and the minimal automata of $X^*$.}\label{figLiteralMinimal}
\end{figure}
\end{example}
A simple automaton $\A=(Q,1,1)$ is said to be
\emph{reversible}\index{automaton!reversible}\index{reversible
  automaton} if for any $a\in A$, the partial map
$\varphi_\A(a):p\mapsto p\cdot a$ is injective. This condition allows
to construct the \emph{reversal} of the automaton\index{reversal of an
  automaton} as follows: whenever $q\cdot a =p$ in $\A$, then $p\cdot
a =q$ in the reversal automaton. The state~$1$ is the initial and the
unique terminal state of this automaton.  Any reversible automaton is
minimal~\cite{Reutenauer1979} (but not conversely). 
The set recognized by a reversible
automaton  is a submonoid generated by a bifix code.

A simple automaton $\A=(Q,1,1)$ is a \emph{group automaton}
if for any $a\in A$ the map $\varphi_\A(a):p\mapsto p\cdot a$
is a permutation of $Q$. Thus in particular, a group automaton
is reversible.
 A finite reversible automaton which is complete is a group automaton.

 The
following result is from~\cite{Reutenauer1979} (see also Exercise
6.1.2 in \cite{BerstelPerrinReutenauer2009}). We denote by
$\langle X\rangle$ the subgroup of the free group $F_A$ generated by $X$.

\begin{proposition}\label{lemmaExercise612}
  Let $X\subset A^+$ be a bifix code.  The following conditions are
  equiva\-lent.
  \begin{enumerate}
  \item[\upshape{(i)}] $X^*=\langle X\rangle\cap A^*$;
  \item[\upshape{(ii)}] the minimal automaton of $X^*$ is reversible.
  \end{enumerate}
\end{proposition}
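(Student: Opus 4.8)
The plan is to prove the two implications separately, after observing that the inclusion $X^*\subseteq\langle X\rangle\cap A^*$ is automatic: every element of $X^*$ is a product of elements of $X$, hence lies in $\langle X\rangle$, and it lies in $A^*$ by construction. So all the work is in the reverse inclusion and in its failure.

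For (ii)$\Rightarrow$(i), I would use reversibility to promote the action of the minimal automaton $\A=(Q,1,1)$ of $X^*$ to a partial action of the whole free group $F_A$. Since each $\varphi_\A(a)$ is an injective partial map, one may set $p\cdot a^{-1}=q$ whenever $q\cdot a=p$, thereby defining $p\cdot w$ for every word $w$ on $A\cup A^{-1}$. I would then establish two facts: (a) for each $x\in X$ one has $1\cdot x=1$, because $x\in X^*$ is recognized, and hence $1\cdot x^{-1}=1$ by running the accepting path backwards; and (b) whenever $1\cdot u$ is defined, $1\cdot u=1\cdot\rho(u)$, because deleting a factor $aa^{-1}$ or $a^{-1}a$ leaves the value unchanged on the part of the domain where the path already exists (this is the elementary identity $p\cdot(aa^{-1})=p$ for a partial injection). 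Given $w\in\langle X\rangle\cap A^*$, I would write $w\equiv x_1^{e_1}\cdots x_m^{e_m}$ with $x_i\in X$ and $e_i\in\{1,-1\}$; by (a) the path of this unreduced word returns to $1$, so by (b) we get $1\cdot w=1\cdot\rho(x_1^{e_1}\cdots x_m^{e_m})=1$. As $w\in A^*$ and $1$ is the terminal state, $w$ is recognized, that is $w\in X^*$.

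For (i)$\Rightarrow$(ii) I would argue by contraposition. If $\A$ is not reversible, there is a letter $a$ and two distinct states $\alpha^{-1}X^*\ne\beta^{-1}X^*$, reached from $1$ by positive words $\alpha,\beta$, with $(\alpha a)^{-1}X^*=(\beta a)^{-1}X^*$. Since $\A$ is trim, I would choose a word $y$ leading this common successor state to the terminal state, so that $\alpha a y\in X^*$ and $\beta a y\in X^*$; then in $F_A$ the product $(\alpha a y)(\beta a y)^{-1}$ collapses to $\alpha\beta^{-1}$, which therefore lies in $\langle X\rangle$. Because the two states differ, there is a positive word $z$ with, say, $\alpha z\in X^*$ but $\beta z\notin X^*$; writing $\beta z\equiv(\beta\alpha^{-1})(\alpha z)$ exhibits $\beta z$ as a product of elements of $\langle X\rangle$, hence $\beta z\in\langle X\rangle$. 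As $\beta z\in A^*$, this yields $\beta z\in(\langle X\rangle\cap A^*)\setminus X^*$, contradicting (i).

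The step I expect to be most delicate is the well-definedness in (ii)$\Rightarrow$(i): one must verify carefully that the partial free-group action is compatible with reduction, i.e. that inserting or deleting a cancelling pair $aa^{-1}$ neither creates a spurious accepting path nor destroys an existing one, which is precisely where the injectivity of each $\varphi_\A(a)$ (that is, reversibility) is used. In the converse direction the only care needed is the free-group bookkeeping guaranteeing the cancellation $(\alpha a y)(\beta a y)^{-1}\equiv\alpha\beta^{-1}$, together with the observation that $\alpha$, $\beta$ and $z$ can all be taken to be ordinary positive words, so that the witness $\beta z$ indeed belongs to $A^*$.
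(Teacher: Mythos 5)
Your proof is correct, but note that the paper does not prove Proposition~\ref{lemmaExercise612} at all: it quotes the result from~\cite{Reutenauer1979} (see also Exercise 6.1.2 in~\cite{BerstelPerrinReutenauer2009}), and the implication (ii)$\Rightarrow$(i) also appears, already packaged, in Proposition~\ref{propGeneratedGroup}. So what you have supplied is a self-contained replacement for the citation, and both directions are sound. In (ii)$\Rightarrow$(i), injectivity of each $\varphi_\A(a)$ is exactly what makes $p\cdot a^{-1}$ a well-defined partial map, and your identities $p\cdot(aa^{-1})=p$ and $p\cdot(a^{-1}a)=p$ (when the left sides are defined) show that deleting a cancelling pair preserves both definedness and the endpoint of a generalized path --- the only direction of reduction you actually need, since you start from the everywhere-defined path of the unreduced word $x_1^{e_1}\cdots x_m^{e_m}$ (each block fixes state $1$ by running accepting paths forwards or backwards). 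One point you should make explicit: concluding $w\in X^*$ from $1\cdot w=1$ uses the fact that the minimal automaton of $X^*$ is \emph{simple}, i.e.\ that $1$ is its unique terminal state, which is where prefixness of $X$ enters; without it, $1\cdot w=1$ would only say $w$ ends at the initial state. In (i)$\Rightarrow$(ii), your contrapositive correctly uses trimness of the minimal automaton (every state is a nonempty residual, so the completing word $y$ exists), the cancellation $(\alpha ay)(\beta ay)^{-1}\equiv\alpha\beta^{-1}$ puts $\alpha\beta^{-1}$ in $\langle X\rangle$, and a word $z$ separating the two distinct residuals yields $\beta z\in(\langle X\rangle\cap A^*)\setminus X^*$ (up to exchanging the roles of $\alpha$ and $\beta$, as your ``say'' indicates). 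Conceptually, this converse is a Stallings folding in miniature: a failure of injectivity at $a$ is precisely a foldable pair of states, and your $\beta z$ witnesses that folding strictly enlarges the positive words described, which matches the paper's discussion of foldings in Section~\ref{sectionAutomata}.
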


Let $\A=(Q,i,T)$ be a deterministic automaton. A \emph{generalized
  path} \index{generalized path}\index{path!generalized} is a sequence
$(p_0,a_1,p_1,a_2,\ldots,p_{n-1},a_n,p_n)$ with $a_i\in A\cup A^{-1}$
and $p_i\in Q$, such that for $1\le i\le n$, one has $p_{i-1}\cdot
a_i=p_i$ if $a_i\in A$ and $p_{i}\cdot a_i^{-1}=p_{i-1}$ if $a_i\in
A^{-1}$.  The \emph{label} of the generalized path is  the reduced
word
equivalent to
$a_1a_2\cdots a_n$. It is an element
 of the free group $F_A$. The set \emph{described} by the automaton
is the set of labels of generalized paths from $i$ to a state in $T$.
Since a path  is a particular case of a generalized
path, the set recognized by an automaton $\A$ is a subset
of the set described by $\A$.

The set described by a simple automaton is a
subgroup of $F_A$. It is called the \emph{subgroup
  described}\index{subgroup described} by $\A$.

\begin{example}\label{exGroupRecognized}
  Let $\A=(Q,1,1)$ be the automaton represented in Figure~\ref{figDescribed}. 
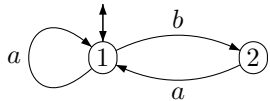
\begin{figure}[hbt]
\gasset{Nadjust=wh}\centering
\begin{picture}(20,10)
\node[Nmarks=if,iangle=90,fangle=90](1)(0,5){$1$}\node(2)(20,5){$2$}
\drawloop[loopangle=180](1){$a$}
\drawedge[curvedepth=3](1,2){$b$}\drawedge[curvedepth=3](2,1){$a$}
\end{picture}
\caption{A simple automaton describing the free group on $\{a,b\}$.}\label{figDescribed}
\end{figure}
The submonoid
  recognized by $\A$ is $\{a,ba\}^*$.  Since $\{a,ba\}$
is a basis of the free group on $A$, the subgroup described by $\A$ is
  the free group on $A$.
\end{example}

The following result is Proposition 6.1.3
 in~\cite{BerstelDeFelicePerrinReutenauerRindone2012}.

\begin{proposition}\label{propGeneratedGroup}
  Let $\A$ be a simple automaton and let $X$ be the prefix code
  generating the submonoid recognized by $\A$. The subgroup  described
  by $\A$ is generated by $X$. If moreover $\A$ is reversible, then
  $X^*=\langle X\rangle\cap A^*$.
\end{proposition}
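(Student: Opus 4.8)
The plan is to establish the two assertions separately, writing $G$ for the subgroup of $F_A$ described by $\A=(Q,1,1)$ and recalling that the submonoid recognized by $\A$ is exactly $X^*$, the set of labels of genuine paths from $1$ to $1$. For the first assertion the inclusion $\langle X\rangle\subseteq G$ is immediate: every $x\in X$ lies in $X^*$, hence labels a path from $1$ to $1$, which is a special case of a generalized path, so $X\subseteq G$, and $G$ is a subgroup. The content is the reverse inclusion $G\subseteq\langle X\rangle$, for which I would run a Reidemeister--Schreier style ``detour'' argument.

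Concretely, using that $\A$ is trim, I would fix for each state $q\in Q$ a word $\gamma_q$ labeling a path from $1$ to $q$ and a word $\lambda_q$ labeling a path from $q$ to $1$, with $\gamma_1=\lambda_1=1$. Given a generalized path $(p_0,a_1,p_1,\dots,a_n,p_n)$ from $1$ to $1$, I would telescope its label in $F_A$ as $a_1\cdots a_n=t_1t_2\cdots t_n$ where $t_i=\gamma_{p_{i-1}}\,a_i\,\gamma_{p_i}^{-1}$ (this uses only $\gamma_{p_0}=\gamma_{p_n}=1$). It then remains to check that each $t_i\in\langle X\rangle$. If $a_i\in A$, then $p_{i-1}\cdot a_i=p_i$ is a genuine edge, so $\gamma_{p_{i-1}}a_i\lambda_{p_i}$ and $\gamma_{p_i}\lambda_{p_i}$ both label paths from $1$ to $1$ and hence lie in $X^*$; writing $t_i=(\gamma_{p_{i-1}}a_i\lambda_{p_i})(\gamma_{p_i}\lambda_{p_i})^{-1}$ exhibits $t_i\in\langle X\rangle$. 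If $a_i\in A^{-1}$, the same computation applied to $t_i^{-1}$ gives $t_i\in\langle X\rangle$. Hence the label lies in $\langle X\rangle$, proving $G\subseteq\langle X\rangle$ and thus $G=\langle X\rangle$.

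For the second assertion, suppose $\A$ is reversible and take $w\in\langle X\rangle\cap A^*$; since $X^*\subseteq\langle X\rangle\cap A^*$ is clear, I only need $w\in X^*$. As $w\in\langle X\rangle\subseteq G$, it is the reduced label of some generalized path from $1$ to $1$. The key point is that, because $\A$ is reversible, free reduction of the label can be realized at the level of the path without moving its endpoints: if two consecutive steps form a pattern $a\,a^{-1}$ (resp. $a^{-1}a$) with $a\in A$, then the intermediate states $p_{j-1},p_j,p_{j+1}$ satisfy $p_{j-1}\cdot a=p_j=p_{j+1}\cdot a$ (resp. $p_j\cdot a=p_{j-1}$ and $p_j\cdot a=p_{j+1}$), and injectivity of $\varphi_\A(a)$ --- respectively determinism --- forces $p_{j-1}=p_{j+1}$. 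Deleting this backtracking yields a shorter generalized path from $1$ to $1$ with the same reduced label.

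Iterating, I obtain a reduced generalized path from $1$ to $1$ whose sequence of steps is literally the reduced word equal to $w$. Since $w\in A^*$ is already reduced and positive, every step of this path is a forward letter, so the path is a genuine path from $1$ to $1$ labeled $w$; therefore $w\in X^*$. This gives $\langle X\rangle\cap A^*\subseteq X^*$, and hence equality. I expect the main obstacle to be the second assertion, and specifically the verification that reversibility makes free-group reduction compatible with the partial transition action on states (the identity $p_{j-1}=p_{j+1}$ above); the generation statement is essentially the standard transversal computation and should be routine.
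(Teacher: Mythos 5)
Your proof is correct. The paper does not prove this statement itself---it refers to Proposition 6.1.3 of \cite{BerstelDeFelicePerrinReutenauerRindone2012}---and your argument is essentially the standard one used there: for generation, the transversal/telescoping computation (trimness supplies the words $\gamma_q$ and $\lambda_q$, and the identity $t_i=(\gamma_{p_{i-1}}a_i\lambda_{p_i})(\gamma_{p_i}\lambda_{p_i})^{-1}$ correctly places each factor in $\langle X\rangle$), and for the reversible case, backtracking removal along the generalized path, where your key verification is exactly right: injectivity of $\varphi_\A(a)$ collapses the pattern $aa^{-1}$ and determinism collapses $a^{-1}a$, so a positive reduced label must come from a genuine path and hence lies in $X^*$.
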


For any subgroup $H$ of $F_A$, the submonoid $H\cap A^*$ 
 is right and left unitary and thus it is
generated by a bifix code (see~\cite{BerstelPerrinReutenauer2009},
Example 2.2.6).
A subgroup $H$ of the free group on $A$ is \emph{positively
  generated}\index{subgroup!positively generated} if there is a subset of
$A^*$ which generates $H$. In this case, the set $H\cap A^*$
generates the subgroup $H$. Let $X$ be the bifix code which generates
the submonoid $H\cap A^*$. Then $X$ generates the subgroup $H$. This
shows that, for a positively generated subgroup $H$, there is a bifix
code which generates $H$. 

It is well known that a subgroup of finite index of the free group is 
positively generated (see e.g. Proposition 6.1.6
in~\cite{BerstelDeFelicePerrinReutenauerRindone2012}).

The following result is contained in
Proposition 6.1.4 and 6.1.5 
in~\cite{BerstelDeFelicePerrinReutenauerRindone2012}.

\begin{proposition}\label{propStallings}
  For any positively generated subgroup $H$ of the free group on $A$, there is a
  unique reversible  automaton $\A$ such that $H$ is the
  subgroup described by $\A$. The subgroup is of finite index
if and only if this automaton is a finite group automaton.
\end{proposition}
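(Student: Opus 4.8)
The plan is to construct the automaton explicitly from a bifix code generating $H$, and then to extract both uniqueness and the finite-index characterization from the fact, recalled above, that every reversible automaton is minimal. First I would settle existence. Since $H$ is positively generated, the discussion preceding the statement furnishes a bifix code $X$ with $X^*=H\cap A^*$ and $\langle X\rangle=H$. Let $\A$ be the minimal automaton of $X^*$, which is simple. As $X$ generates $H$, we have $X^*=\langle X\rangle\cap A^*$, so Proposition~\ref{lemmaExercise612} tells us that $\A$ is reversible, while Proposition~\ref{propGeneratedGroup} tells us that the subgroup described by $\A$ is $\langle X\rangle=H$. Hence a reversible automaton describing $H$ exists.

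For uniqueness, let $\A'$ be any reversible automaton describing $H$, and let $Y$ be the prefix code generating the submonoid it recognizes. By Proposition~\ref{propGeneratedGroup} the subgroup described is $\langle Y\rangle=H$, and since $\A'$ is reversible the same proposition gives $Y^*=\langle Y\rangle\cap A^*=H\cap A^*$. Thus $\A'$ recognizes exactly the language $H\cap A^*$, as does $\A$. Because every reversible automaton is minimal and the minimal automaton of a language is unique up to isomorphism, $\A'=\A$.

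It remains to characterize finite index. If $\A$ is a finite group automaton with state set $Q$, each letter acts as a permutation of $Q$, so the $A^*$-action extends to an action of $F_A$ on $Q$; the subgroup described by $\A$ is precisely the stabilizer of the initial state, and trimness makes this action transitive, so orbit–stabilizer yields $[F_A:H]=\Card(Q)<\infty$. Conversely, if $H$ has finite index I would form the coset automaton whose states are the right cosets $H\backslash F_A$, with transitions $Hg\cdot a=Hga$ and $H$ as initial and unique terminal state. Each letter acts as a permutation of this finite set, so it is a finite group automaton, and the labels of generalized paths from $H$ to $H$ are exactly the elements of $H$. The one point needing care is that it is \emph{simple}, i.e. trim: every coset must be reachable from $H$ by a positive path. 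This holds because the orbit of $H$ under the letter-permutations is closed under each such permutation, hence, on a finite set, under its inverse as well; the orbit is therefore $F_A$-invariant, and by transitivity of the coset action equals all of $H\backslash F_A$ (coreachability is automatic since all transitions are permutations). By the uniqueness already proved, this finite group automaton is $\A$.

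The main obstacle I anticipate is exactly this forward direction of the index statement, namely manufacturing a \emph{simple} reversible automaton out of the mere hypothesis of finite index. The delicate step is trimness of the coset automaton, that is, that reading letters of $A$ alone suffices to reach every coset; the permutation argument above is what makes this go through, after which uniqueness identifies the coset automaton with $\A$ and finishes the proof.
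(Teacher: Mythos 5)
Your proof is correct, but note that the paper itself does not prove Proposition~\ref{propStallings}: it is imported by citation (Propositions 6.1.4 and 6.1.5 of~\cite{BerstelDeFelicePerrinReutenauerRindone2012}), so there is no in-paper argument to match; what you have done is reassemble the standard proof from the ingredients the paper does state, and you have done so accurately. Your existence step (take the bifix code $X$ generating $H\cap A^*$, observe $X^*=\langle X\rangle\cap A^*$ by positive generation, and invoke Proposition~\ref{lemmaExercise612} to get reversibility of the minimal automaton of $X^*$, then Proposition~\ref{propGeneratedGroup} to identify the described subgroup) and your uniqueness step (any reversible automaton describing $H$ recognizes exactly $H\cap A^*$ by the second assertion of Proposition~\ref{propGeneratedGroup}, and reversible automata are minimal, so uniqueness up to isomorphism follows from uniqueness of the minimal automaton) are exactly the intended derivation. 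The one place where you had to supply a genuinely nontrivial argument is the forward direction of the index statement, and you correctly identified and resolved the delicate point: trimness of the coset automaton on $H\backslash F_A$, i.e.\ that positive words alone reach every coset. Your argument --- the positively reachable set is closed under each letter-permutation, hence, being finite, closed under the inverse permutations as well, hence $F_A$-invariant and, by transitivity of the coset action, equal to everything --- is sound, and it also yields coreachability since the monoid generated by finitely many permutations of a finite set is a group. The orbit--stabilizer computation $[F_A:H]=\Card(Q)$ in the other direction is likewise correct (and recovers the paper's remark that the index equals the number of states of the Stallings automaton). Two minor presentational caveats: uniqueness should be read as uniqueness up to isomorphism of automata, and in identifying the described subgroup of a group automaton with the stabilizer of the initial state you are implicitly using that the $A^*$-action extends to an $F_A$-action through which labels of generalized paths factor --- worth one explicit sentence, but not a gap.
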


For an illustration, see Example~\ref{exampleBasisJulien} below.

The reversible automaton $\A$ such that $H$ is the subgroup described
by $\A$ is called the \emph{Stallings automaton}\index{Stallings
  automaton} of the subgroup $H$. It can also be defined for a
subgroup which is not positively generated
(see~\cite{BartholdiSilva2011} or \cite{KapovichMyasnikov2002}).

The Stallings automaton of the subgroup $H$ generated by a bifix
code $X\subset A^*$ can be obtained as follows. Start with the
minimal automaton $\A=(Q,1,1)$ of $X^*$. Then, if there are distinct states
$p,q\in Q$ and $a\in A$ such that $p\cdot a=q\cdot a$, merge
$p,q$ (such a merge is called a \emph{Stallings folding}).
Iterating this operation leads to a reversible automaton
which is the Stallings automaton of $H$
(see~\cite{KapovichMyasnikov2002}).

 A subgroup $H$ of the free group has finite index if and only if
its Stallings automaton is a finite group automaton (see
Proposition~\ref{propStallings}). In this case, the index of $H$
is the number of states of the Stallings automaton.
\begin{example}
Let $X=\{aa,ab,ba\}$.
The minimal automaton of $X^*$ is represented in Figure~\ref{figReversible}
on the left. It is not reversible because $2\cdot a=3\cdot a$.
 Merging the states $2$ and $3$, we obtain the reversible
automaton of Figure~\ref{figReversible} on the right.
It is actually a group automaton, which
 is the Stallings automaton of the subgroup $H=\langle X\rangle$.
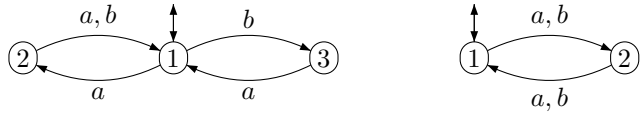
\begin{figure}[hbt]
\gasset{Nadjust=wh}\centering
\begin{picture}(80,12)
\put(0,0){
\begin{picture}(40,10)
\node(2)(0,5){$2$}
\node[Nmarks=if,iangle=90,fangle=90](1)(20,5){$1$}\node(3)(40,5){$3$}
\drawedge[curvedepth=3](1,2){$a$}\drawedge[curvedepth=3](2,1){$a,b$}
\drawedge[curvedepth=3](1,3){$b$}\drawedge[curvedepth=3](3,1){$a$}
\end{picture}
}
\put(60,0){
\begin{picture}(20,10)
\node[Nmarks=if,iangle=90,fangle=90](1)(0,5){$1$}\node(2)(20,5){$2$}
\drawedge[curvedepth=3](1,2){$a,b$}\drawedge[curvedepth=3](2,1){$a,b$}
\end{picture}
}
\end{picture}
\caption{A Stallings folding.}\label{figReversible}
\end{figure}
Since the automaton describes the group $\Z/2\Z$, we conclude that
the subgroup generated by $X$ is of index $2$ in the free group on $A$.
It is actually formed of the reduced words of even length.
\end{example}

\subsection{Strong, weak and neutral words}
Let $S$ be a factorial set.
For a word $w\in S$, let
\begin{displaymath}
m(w)=e(w)-\ell(w)-r(w)+1.
\end{displaymath}
We say that, with respect to
 $S$,  $w$ is \emph{strong} if $m(w)>0$,
\emph{weak} if $m(w)<0$ and \emph{neutral}  if $m(w)=0$.

A biextendable word $w$ is called \emph{ordinary} if $E(w)\subset a\times
A\cup A\times b$ for some $(a,b)\in E(w)$ (see~\cite{BertheRigo2010},
Chapter 4).
If $S$ is biextendable any ordinary word is neutral. Indeed, one has
$E(w)=(a\times (R(w)\setminus b))\cup ((L(w)\setminus a)\times b)\cup (a,b)$
and thus $e(w)=\ell(w)+r(w)-1$.

\begin{example}\label{exSturmianIsOrdinary}
 In a Sturmian set, any word is ordinary. Indeed, for
any bispecial word $w$, there is a unique letter $a$ such that
$aw$ is right-special and a unique letter $b$ such that $wb$
is left-special. Then $awb\in S$ and $E(w)=a\times A\cup A\times b$.
\end{example}
We say that a set $S$ is \emph{strong} (resp. \emph{weak},
resp. \emph{neutral}) if it is
factorial
and  every  word
$w\in S$ is strong or neutral (resp. weak or neutral, resp. neutral). 

The sequence $(p_n)_{n\ge 0}$ with $p_n=Card(S\cap A^n)$ is called the
\emph{factor complexity} (or complexity) of $S$. Set $k=\Card(S\cap A)-1$.
\begin{proposition}\label{propComplexityNeutral}
The factor complexity of a strong (resp. weak, resp. neutral) set $S$
is 
at least  (resp. at most, resp. exactly) equal to $kn+1$.
\end{proposition}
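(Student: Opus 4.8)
The plan is to reduce the statement to the behaviour of the first difference of the complexity function, controlled by the quantities $m(w)$. Write $s_n=p_{n+1}-p_n$. The engine of the proof is a handful of elementary counting identities valid in any factorial set $S$: summing the local data over the words of length $n$ recovers the counts one and two levels up. First I would record that
$$\sum_{w\in S\cap A^n}\ell(w)=\sum_{w\in S\cap A^n}r(w)=p_{n+1},\qquad \sum_{w\in S\cap A^n}e(w)=p_{n+2},$$
together with the trivial $\Card(S\cap A^n)=p_n$. Each is a bijective count: every $u\in S\cap A^{n+1}$ factors uniquely as $u=wa$ with $|w|=n$, and $w\in S$ because $S$ is factorial, so the pairs $(w,a)$ with $w\in S\cap A^n$ and $a\in R(w)$ are exactly the elements of $S\cap A^{n+1}$; the identity for $e$ is the same count applied to the middle factor $w$ of the words $u=awb\in S\cap A^{n+2}$. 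Factoriality is precisely what guarantees that the truncated word still lies in $S$, so it is the one hypothesis used essentially here.

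Summing the definition $m(w)=e(w)-\ell(w)-r(w)+1$ over all $w\in S\cap A^n$ and substituting these identities yields the central formula
$$\sum_{w\in S\cap A^n}m(w)=p_{n+2}-2p_{n+1}+p_n=s_{n+1}-s_n.$$
Thus the total multiplicity at level $n$ is exactly the second difference of the complexity, i.e. the increment of the sequence $(s_n)$. The base value is immediate: since $S$ is factorial and nonempty, $\varepsilon\in S$ and $p_0=1$, while $p_1=\Card(S\cap A)=k+1$, so $s_0=k$.

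From here the three cases follow at once from the sign of $m$. If $S$ is neutral then $m(w)=0$ for every $w\in S$, so $s_{n+1}=s_n$ for all $n$; hence $s_n=s_0=k$ for every $n$, and telescoping $p_n=p_0+\sum_{i=0}^{n-1}s_i$ gives exactly $p_n=kn+1$. If $S$ is strong then $m(w)\ge 0$ throughout, so $(s_n)$ is nondecreasing, whence $s_n\ge s_0=k$ and $p_n=1+\sum_{i=0}^{n-1}s_i\ge kn+1$. The weak case is symmetric: $(s_n)$ is nonincreasing and $p_n\le kn+1$.

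The step I expect to require the most care is the bookkeeping at the boundary rather than anything analytic. I would make sure the counting identities are stated so as to cover $n=0$, treating $\varepsilon\in S$ as an ordinary word of the set, because it is the neutrality of $\varepsilon$ (forcing $p_2=2k+1$) that makes the neutral estimate exact for all $n$ rather than merely asymptotic, and it is the value $s_0=k$ that anchors the two inequalities. Once the second-difference identity and this base value are in place, the three monotonicity arguments are routine.
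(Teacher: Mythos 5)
Your proof is correct and follows essentially the same route as the paper: the identity $\sum_{w\in S\cap A^n}m(w)=s_{n+1}-s_n$ is exactly Lemma~\ref{lemmaEnum} (which the paper cites from Cassaigne), and the conclusion is then drawn, as in the paper, from the monotonicity of $(s_n)$ together with the base value $s_0=k$. The only difference is that you supply the bijective-counting proof of the lemma instead of citing it, which is the standard argument and entirely sound.
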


Given a factorial set $S$ with complexity $p_n$, we denote
$s_n=p_{n+1}-p_n$ the first difference of the sequence $p_n$
and $b_n=s_{n+1}-s_n$ its second difference.
The following is from~\cite{Cassaigne1997}
(it is also part of Theorem 4.5.4 in~\cite[Chapter 4]{BertheRigo2010}).

\begin{lemma}\label{lemmaEnum}
We have, for all $n\ge 0$,
\begin{displaymath}
b_n=\sum_{w\in A^n\cap S}m(w)\quad \text{ and } \quad s_n=\sum_{w\in A^n\cap
  S}(r(w)-1).
\end{displaymath}

\end{lemma}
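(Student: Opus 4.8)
The plan is to prove both identities by double counting, deriving the second-difference formula from the first-difference one by a telescoping argument; the whole thing rests on factoriality of $S$ and on two short bijections.

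First I would settle the formula for $s_n$. Since $S$ is factorial, every word $wa\in S$ of length $n+1$ has its length-$n$ prefix $w$ in $S$, and conversely the words of length $n+1$ in $S$ are exactly the $wa$ with $w\in A^n\cap S$ and $a\in R(w)$. This gives $p_{n+1}=\sum_{w\in A^n\cap S} r(w)$, and subtracting $p_n=\sum_{w\in A^n\cap S}1$ yields $s_n=\sum_{w\in A^n\cap S}(r(w)-1)$ at once. Note this already covers the degenerate case $n=0$, where the single word $w=\varepsilon$ contributes $r(\varepsilon)-1=\Card(S\cap A)-1$.

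For $b_n$, the idea is to apply the identity just proved one level up and then regroup. Writing $s_{n+1}=\sum_{u\in A^{n+1}\cap S}(r(u)-1)$, I would reindex the sum by the length-$n$ suffix of $u$: each $u\in A^{n+1}\cap S$ is uniquely $u=aw$ with $w\in A^n\cap S$ and $a\in L(w)$. The key local computation is
\[
\sum_{a\in L(w)} r(aw)=e(w),
\]
which holds because the pairs $(a,b)$ counted by $e(w)$, i.e. those with $awb\in S$, biject with the pairs consisting of $a\in L(w)$ together with $b\in R(aw)$ (factoriality again: $awb\in S$ forces $aw\in S$). Since $\sum_{a\in L(w)}1=\ell(w)$, the regrouping gives
\[
s_{n+1}=\sum_{w\in A^n\cap S}\bigl(e(w)-\ell(w)\bigr).
\]

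Finally I would subtract the $s_n$ formula from this expression, obtaining
\[
b_n=s_{n+1}-s_n=\sum_{w\in A^n\cap S}\bigl(e(w)-\ell(w)-r(w)+1\bigr)=\sum_{w\in A^n\cap S} m(w),
\]
by the definition of $m(w)$. The argument is essentially bookkeeping, and the only point that needs genuine care—the main (modest) obstacle—is the regrouping step: one must check that the index sets match exactly, i.e. that summing $(r(u)-1)$ over $u\in A^{n+1}\cap S$ really does decompose as a sum over $w\in A^n\cap S$ of $\sum_{a\in L(w)}(r(aw)-1)$, and that the bijection underlying $\sum_{a\in L(w)}r(aw)=e(w)$ is correct. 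Both are where factoriality of $S$ is used, and getting them right is all that separates the plan from a complete proof.
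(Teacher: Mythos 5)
Your proof is correct and complete: the prefix-extension bijection giving $p_{n+1}=\sum_{w\in A^n\cap S}r(w)$, the suffix reindexing $u=aw$, and the identity $\sum_{a\in L(w)}r(aw)=e(w)$ (all resting on factoriality, exactly where you say they do) are precisely the standard double-counting argument. The paper itself gives no proof of this lemma, citing Cassaigne and Berth\'e--Rigo instead, and your argument matches the one found in those references.
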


Proposition~\ref{propComplexityNeutral} follows easily from
the fact that 
if $S$ is strong (resp. weak, resp. neutral), then $s_n\ge k$
(resp. $s_n\le k$, resp. $s_n=k$)
for all $n\ge 0$.

We now give an example of a set of complexity $2n+1$ on an alphabet
with three letters
which is not neutral.
\begin{example}\label{exampleChacon}
Let $A=\{a,b,c\}$.
The \emph{Chacon word} on three letters
is the fixpoint $x=f^\omega(a)$ of the morphism $f$ from
$A^*$ into itself defined by $f(a)=aabc$, $f(b)=bc$ and $f(c)=abc$.
Thus $x=aabcaabcbcabc\cdots$. The \emph{Chacon set} is the set $S$ of
factors of $x$. It is of complexity $2n+1$ (see~\cite{PytheasFogg2002}
Section 5.5.2).

It contains strong, neutral and weak words. Indeed,
$S\cap A^2=\{aa,ab,bc,ca,cb\}$ and thus $m(\varepsilon)=0$
showing that the empty word is neutral. Next
 $m(abc)=1$
and   $m(bca)=-1$, showing that $abc$ is strong while $bca$ is weak.
\end{example}

\subsection{Return words}

Let $S$ be a  set of words. For $w\in S$, let
\begin{displaymath}
\Gamma_S(w)=\{x\in S\mid wx\in S\cap A^+w\}\quad\text{and}\quad
\Gamma'_S(w)=\{x\in S\mid xw\in S\cap wA^+\}
\end{displaymath}
be respectively the set of \emph{right return words} and of
\emph{left return words} to $w$. If $S$ is recurrent, the sets
$\Gamma_S(w)$ and $\Gamma'_S(w)$ are nonempty. Let
\begin{displaymath}
\RR_S(w)=\Gamma_S(w)\setminus\Gamma_S(w) A^+\quad \text{and}\quad
 \RR'_S(w)=\Gamma'_S(w)\setminus A^+\Gamma'_S(w)
\end{displaymath}
be respectively the 
set of \emph{first right  return words} and the set of \emph{first
  left
return words} to $w$.  Note that $w\RR_S(w)=\RR'_S(w)w$.

Note that  a recurrent set $S$ is uniformly recurrent if and only if
the set $\RR_S(w)$ is finite for any $w\in S$. Indeed, if $N$
is the maximal length of the words in $\RR_S(w)$ for a word $w$
of length $n$, then two successive occurrences of $w$
in a word of $S$ are separated by a word of length at most $N-n$.
Thus any word in $S$ of length $N+n$ contains an occurrence
of $w$. The converse is obvious.

The following result has been proved  in~\cite{BalkovaPelantovaSteiner2008},
generalizing a property proved for Sturmian words 
in~\cite{JustinVuillon2000}
and for interval exchange in~\cite{Vuillon2007}.
\begin{theorem}\label{theoremCardReturn}
Let $S$ be a uniformly recurrent neutral set containing the alphabet $A$.
Then for every $w\in S$,
the set
 $\RR_S(w)$ has  $\Card(A)$ elements.
\end{theorem}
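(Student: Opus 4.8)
The plan is to count $\RR_S(w)$ by organizing the first right return words into a tree and then evaluating the resulting branching number through a telescoping argument that exploits neutrality. Let $n=|w|$ and let $\mathcal{U}$ be the set of words $u\in S$ such that $w$ is a prefix of $u$ and $w$ occurs exactly once in $u$; these are precisely the words $wp$ where $p$ is a proper (possibly empty) prefix of a first return word. Reading letters from left to right organizes $\mathcal{U}$, together with the words $wr$ for $r\in\RR_S(w)$, into a rooted tree with root $w$ whose internal nodes are the elements of $\mathcal{U}$ and whose leaves are the words $wr$. I would check that every right extension $wpb\in S$ of an internal node is again a node (another element of $\mathcal{U}$, or a leaf when $wpb$ ends with $w$), so that $wp$ has exactly $r(wp)$ children. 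Since a finite tree satisfies (number of leaves) $=1+\sum(\text{children}-1)$ over its internal nodes, this gives
\[
\Card(\RR_S(w))=1+\sum_{u\in\mathcal{U}}\bigl(r(u)-1\bigr).
\]

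Next I would show that $\sum_{u\in\mathcal{U}}(r(u)-1)=k=\Card(A)-1$. For $j\ge 0$ let $\mathcal{U}_j$ be the set of words of $S$ whose last occurrence of $w$ starts at position $j$, so that $\mathcal{U}_0=\mathcal{U}$ and $\mathcal{U}_{j+1}=\{au\mid u\in\mathcal{U}_j,\ a\in L(u)\}$. Using neutrality in the form $r(u)-1=e(u)-\ell(u)$ together with the elementary identities $e(u)=\sum_{a\in L(u)}r(au)$ and $\ell(u)=\Card(L(u))$, I obtain
\[
\sum_{u\in\mathcal{U}_j}\bigl(r(u)-1\bigr)=\sum_{u\in\mathcal{U}_j}\sum_{a\in L(u)}\bigl(r(au)-1\bigr)=\sum_{v\in\mathcal{U}_{j+1}}\bigl(r(v)-1\bigr).
\]
Hence this sum is a constant $c$ independent of $j$, and by the first step $c=\Card(\RR_S(w))-1$.

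Finally I would pin down $c$ by an asymptotic comparison with the level sums $s_m=\sum_{v\in S\cap A^m}(r(v)-1)$. Since $S$ is uniformly recurrent, the first return words have bounded length $L$, there is $M_0$ with $w$ a factor of every word of length $\ge M_0$, and each $\mathcal{U}_j$ is finite. As every word of $S$ containing $w$ lies in exactly one $\mathcal{U}_j$, summing the constant over $j=0,\dots,J$ and regrouping by length yields
\[
(J+1)\,c=\sum_{m\ge 0}\ \sum_{\substack{v\in S\cap A^m\\ \text{last occ.}\le J}}\bigl(r(v)-1\bigr).
\]
For $M_0\le m\le J+n$ the inner sum is the full $s_m$, which equals $k$ because $S$ is neutral (it contains $A$, so $k=\Card(A)-1$), contributing $kJ+O(1)$; the ranges $m<M_0$ and $J+n<m\le J+n+L-1$ contribute only $O(1)$. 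Dividing by $J$ and letting $J\to\infty$ forces $c=k$, whence $\Card(\RR_S(w))=k+1=\Card(A)$.

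The main obstacle is this last step: the sets $\mathcal{U}_j$ are length-inhomogeneous, so comparing their weights with the level sums $s_m$ requires bounding, \emph{uniformly in $J$}, the boundary contributions coming from words whose tail after the last occurrence of $w$ is long. The key points making this work are that $r(v)-1\ge 0$ (so partial sums are dominated by full ones) and that each $\mathcal{U}_j$ carries the same total weight $c$, so that only $O(L)$ of them can contribute to the boundary; both facts ultimately rest on uniform recurrence.
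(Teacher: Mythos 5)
Your argument is correct, but there is nothing internal to compare it with: the paper does not prove Theorem~\ref{theoremCardReturn}, it imports it from~\cite{BalkovaPelantovaSteiner2008}, so your proposal in effect supplies a self-contained proof using only material the paper already has (the identity $s_m=\sum_{v\in S\cap A^m}(r(v)-1)$ of Lemma~\ref{lemmaEnum}, which for a neutral set containing $A$ gives $s_m=k=\Card(A)-1$ for all $m$, plus uniform recurrence). I checked the three steps and they all hold. The tree count is sound: appending a letter $b$ to an internal node $wp$ can only create a new occurrence of $w$ as a suffix of $wpb$, and a proper prefix of $pb$ in $\Gamma_S(w)$ would force a second occurrence of $w$ inside $wp$, so every child is again a node and the leaves are exactly the words $wr$, $r\in\RR_S(w)$; the tree is finite by uniform recurrence, giving $\Card(\RR_S(w))=1+\sum_{u\in\mathcal{U}}(r(u)-1)$. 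The telescoping step is exactly where neutrality enters, via $\sum_{a\in L(u)}(r(au)-1)=e(u)-\ell(u)=r(u)-1$, and the map $(u,a)\mapsto au$ is indeed a bijection onto $\mathcal{U}_{j+1}$ since a prepended letter can only create an occurrence of $w$ at position $0$, which is never the last one; each $\mathcal{U}_j$ is finite because the tail after the last occurrence of $w$ has length bounded by uniform recurrence. Your final Ces\`aro comparison also goes through as you say: the bookkeeping identity regrouping $\bigcup_{j\le J}\mathcal{U}_j$ by length is exact, the inner sum equals the full $s_m=k$ precisely for $M_0\le m\le J+n$, the range $m>J+n$ is empty beyond $m\le J+n+O(1)$ (a longer tail after the last occurrence would contain $w$), and the boundary contributions are squeezed between $0$ and $s_m=k$ because $r(v)-1\ge 0$ --- which uses that uniformly recurrent sets are right-extendable by definition, a point worth stating explicitly. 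Dividing by $J$ then forces $c=k$ and $\Card(\RR_S(w))=\Card(A)$. What your route buys is a short, purely combinatorial proof within the paper's own toolkit; note, however, that the neutral hypothesis is used in both directions ($e(u)-\ell(u)$ both $\le$ and $\ge$ $r(u)-1$, and $s_m$ exactly constant), so the one-sided refinements for strong and weak sets mentioned after the theorem do not drop out of your scheme without further work.
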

One can actually  prove more
generally,  for a uniformly recurrent set $S$, that
if $S$ is strong (resp. weak, resp. neutral), then for every $w\in S$,
the set
 $\RR_S(w)$ has at least (resp. at most, resp. exactly) $\Card(A)$ elements.

The following example shows that in a set of complexity $kn+1$
the number of first right return words need not be equal to $k+1$.
\begin{example}
Let $S$ be the Chacon set (see Example~\ref{exampleChacon}). We have
$\RR_S(a)=\{a,bca,bcbca\}$ but
$\RR_S(ab)=\{caab,cbcab\}$.
\end{example}

\section{Acyclic, connected and tree sets}\label{sectionAcyclic}
We  introduce in this section the notion of extension graph
of a word. We define acyclic (resp. connected, resp. tree)
sets by the fact that all the
extension graphs of its elements are acyclic (resp. connected,
resp. trees).
We give examples showing that a uniformly recurrent acyclic set
may not be a tree set (Example~\ref{exampleJulienAcyclic})
and that a uniformly recurrent neutral set may not be acyclic
(Example~\ref{exampleJulien}).
We introduce a generalization of the extension graphs
called generalized extension graphs.
 We give conditions under which generalized extension
graphs are acyclic
 (Proposition~\ref{PropStrongTreeCondition}). This allows
in particular to prove the closure under bifix decoding of
the family of acyclic sets, provided the result is biextendable
(Theorem~\ref{decodingAcyclic}).

\subsection{Extension graphs}
Let $S$ be a set of words.
For a  word $w\in S$, we
 consider an undirected graph $E_S(w)$ called its \emph{extension graph}
in $S$
and defined as follows. The set of vertices 
 is the disjoint union of
$L(w)$ and $R(w)$ and its edges are the pairs 
$(a,b)\in E(w)$. We also denote $E(w)$ instead of $E_S(w)$.

\begin{example}
Let $S$ be the Tribonacci set (see Example~\ref{exampleTribonacci}). The graphs $E(\varepsilon)$ and $E(ab)$
are represented in Figure~\ref{figureExtension}.

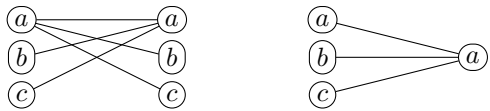
\begin{figure}[hbt]
\centering
\gasset{Nadjust=wh,AHnb=0}
\begin{picture}(70,10)
\put(0,0){
\begin{picture}(30,21)
\node(al)(0,10){$a$}\node(bl)(0,5){$b$}\node(cl)(0,0){$c$}
\node(ar)(20,10){$a$}\node(br)(20,5){$b$}\node(cr)(20,0){$c$}

\drawedge(al,ar){}\drawedge(al,br){}\drawedge(al,cr){}
\drawedge(bl,ar){}\drawedge(cl,ar){}
\end{picture}
}
\put(40,0){
\begin{picture}(30,10)
\node(al)(0,10){$a$}\node(bl)(0,5){$b$}\node(cl)(0,0){$c$}
\node(ar)(20,5){$a$}

\drawedge(al,ar){}
\drawedge(bl,ar){}\drawedge(cl,ar){}
\end{picture}
}
\end{picture}
\caption{The extension graphs $E(\varepsilon)$ and $E(ab)$ in the
Tribonacci set.}\label{figureExtension}
\end{figure}
\end{example}

We say that  $S$ is an \emph{acyclic} (resp. a connected,
resp. a tree) set
 if it is biextendable and if for every  word $w\in S$, the graph
 $E(w)$ is
acyclic (resp. connected, resp. a tree).
Obviously, a tree set is acyclic and connected.

Note  that a biextendable set $S$ is acyclic (resp. connected)
if and only if the graph $E(w)$ is acyclic (resp. connected)
for every bispecial word
$w$.
Indeed, if $w$ is not bispecial, then $E(w) \subset a \times A$ or
$E(w) \subset A  \times a$, thus it is always acyclic and connected. 

If the extension graph $E(w)$ of $w$ is acyclic, then 
$m(w)\le 0$.
Thus $w$ is weak or neutral. More precisely, one has 
in this case, $m(w)=-c+1$ where $c$ is the number
of connected components of the graph $E(w)$. 

Similarly, if $E(w)$
is connected, then $w$ is strong or neutral.
Thus, if $S$ is an acyclic (resp. a connected, resp. a tree) set,
then $S$ is a weak (resp. strong, resp. neutral) set.

\begin{example}
A Sturmian set $S$ is a tree set.
Indeed, any word $w\in S$ is ordinary
(Example~\ref{exSturmianIsOrdinary}),
which implies that $E(w)$ is a tree.
\end{example}

Since a tree set is neutral, we deduce from Proposition~\ref{propComplexityNeutral}
the following statement, where $k=\Card(S\cap A)-1$.
\begin{proposition}
The factor complexity of a tree set is $kn+1$.
\end{proposition}

One may wonder whether  the notion of a
 tree set   is  of  a topological or of a
  measure-theoretic nature
  for the associated symbolic dynamical system.
In particular, one may wonder if uniformly recurrent tree sets have the
property of unique ergodicity, which means that
they have a unique invariant probability measure
(see~\cite{BerstelDeFelicePerrinReutenauerRindone2012} or~\cite{BertheRigo2010}
for the definition of these notions).
An element of answer is provided   by interval exchange sets.

Regular interval exchange sets form a special case of uniformly
recurrent
tree sets (see \cite{BertheDeFeliceDolceLeroyPerrinReutenauerRindone2013}).
 It is well-known  since \cite{Keane1977}
 that  there exist  regular interval exchange sets that  are not
 uniquely  ergodic.  This  shows that  the tree property  does not  
imply  unique  ergodicity.
However  having  complexity  $p_n=kn+1$, which is  a priori of a
topological nature,
  implies  information on invariant measures. Indeed, according 
to~\cite{Boshernitzan1984},
a minimal symbolic dynamical  system  for which $ \liminf p_n/n \leq
k$   is  such
 that there exist at most   $k$ ergodic invariant  measures.
The bound can even be refined to $k-2$ \cite{Monteil2013} by a careful
inspection 
 of the evolution of the Rauzy graphs.
 For $k \leq 2$,  that is for an alphabet of size  at most $3$ in our
 case,  
 one  gets  the following~\cite{Boshernitzan1984}:
a   minimal symbolic        system  such that $ \limsup  p_n/n< 3$
 is uniquely ergodic.
We thus conclude that
any  uniformly recurrent  word  whose set of factors is a  
  tree set   on an alphabet
of size at most $3$ is uniquely ergodic.
\subsection{Two examples}\label{sectionExamples}
 We present 
two examples, due to Julien Cassaigne~\cite{Cassaigne2013}.
The first one is  a
uniformly recurrent  acyclic set which is not a tree set.
\begin{example}\label{exampleJulienAcyclic}
Let $A=\{a,b,c,d\}$ and let $\sigma$ be the morphism from $A^*$
into itself defined by
\begin{displaymath}
\sigma(a)=ab,\ \sigma(b)=cda,\ \sigma(c)=cd,\ \sigma(d)=abc.
\end{displaymath}
Let $S$ be the set of factors of the infinite word
$x=\sigma^\omega(a)$.
Since $\sigma$ is primitive, $S$ is uniformly recurrent. 
The graph $E(\varepsilon)$ is represented in
Figure~\ref{figureGepsilonJulien}.
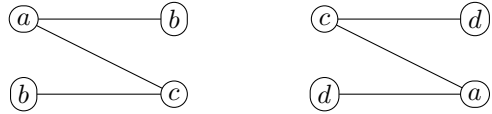
\begin{figure}[hbt]
\centering
\gasset{Nadjust=wh,AHnb=0}

\begin{picture}(70,10)
\node(a)(0,10){$a$}\node(a')(60,0){$a$}
\node(b)(0,0){$b$}\node(b')(20,10){$b$}
\node(c)(20,0){$c$}\node(c')(40,10){$c$}
\node(d)(40,0){$d$}\node(d')(60,10){$d$}

\drawedge(a,b'){}\drawedge(a,c){}
\drawedge(b,c){}
\drawedge(c',a'){}\drawedge(c',d'){}
\drawedge(d,a'){}
\end{picture}
\caption{The graph $E(\varepsilon$).}\label{figureGepsilonJulien}
\end{figure}
It is acyclic with two connected components (and thus $m(\varepsilon)=-1$).
We will show
that for any nonempty word
$w\in S$, the graph $E(w)$ is a tree. This will prove that $S$ is 
acyclic. Actually, let $\pi$ be the morphism from $A^*$ onto $\{a,b\}^*$
defined by $\pi(a)=\pi(c)=a$ and $\pi(c)=\pi(d)=b$. The image of $x$
by $\pi$ is the Sturmian word $y$ which is the fixpoint of the morphism
$\tau:a\mapsto ab,\ b\mapsto aba$. The word $x$ can be obtained back from
$y$ by changing one every other letter $a$  into a $c$ and any letter $b$
after a $c$ into a $d$. Thus every word of the set of factors $G$ of
$y$ gives rise to $2$ words in $S$.

In this way every bispecial word $w$ of $G$ gives two bispecial words
$w',w''$ of $S$ and their extension graphs in $S$ are isomorphic to $E_G(w)$.
For example, the word $ababa$ is bispecial in $G$. It gives the bispecial
words $abcda$ and $cdabc$ in $S$. Their extension graphs are shown below.
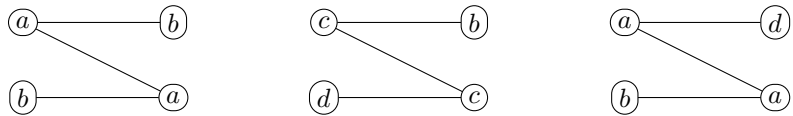
\begin{figure}[hbt]
\centering
\gasset{AHnb=0,Nadjust=wh}
\begin{picture}(100,10)
\put(0,0){
\node(b)(0,0){$b$}\node(a)(0,10){$a$}
\node(a')(20,0){$a$}\node(b')(20,10){$b$}

\drawedge(a,b'){}\drawedge(a,a'){}
\drawedge(b,a'){}
}
\put(40,0){
\node(d)(0,0){$d$}\node(c)(0,10){$c$}
\node(c')(20,0){$c$}\node(b')(20,10){$b$}

\drawedge(c,b'){}\drawedge(c,c'){}
\drawedge(d,c'){}
}
\put(80,0){
\node(b)(0,0){$b$}\node(a)(0,10){$a$}
\node(a')(20,0){$a$}\node(d')(20,10){$d$}

\drawedge(a,d'){}\drawedge(a,a'){}
\drawedge(b,a'){}
}
\end{picture}
\caption{The  graphs  $E_G(ababa)$,
$E_S(abcda)$ and $E_S(cdabc)$.}
\end{figure}

This proves that $S$ is acyclic.
\end{example}

The second example is a uniformly recurrent set which is neutral
but is not a tree set (it is actually not even acyclic).
\begin{example}\label{exampleJulien}

Let $B=\{1,2,3\}$ and let $\tau:A^*\rightarrow B^*$ be defined by
\begin{displaymath}
\tau(a)=12,\quad \tau(b)=2,\quad \tau(c)=3,\quad \tau(d)=13.
\end{displaymath}
Let $G=\tau(S)$ where $S$ is the set of
Example~\ref{exampleJulienAcyclic}.
Thus $G$ is also
 the set of factors of the infinite word $\tau(\sigma^\omega(a))$.

The set $Y=\tau(A)$ is a prefix code. It is not a suffix code but it
is \emph{weakly suffix} in the sense that if $x,y,y'\in X$
and $x'\in X^*$ are such that $xy$ is a suffix of $x'y'$, then $y=y'$.

Let $g:\{a,c\}A^*\cap A^*\{a,c\}\rightarrow B^*$ be the map defined by
\begin{displaymath}
g(w)=\begin{cases}3\tau(w)&\text{if $w$ begins and ends with $a$}\\
3\tau(w)1&\text{if $w$ begins with $a$ and ends with $c$}\\
2\tau(w)&\text{if $w$ begins with $c$ and ends with $a$}\\
2\tau(w)1&\text{if $w$ begins with $c$ and ends with $c$}\\
\end{cases}
\end{displaymath}
It can be verified, using the fact that $Y$
is a prefix and weakly suffix code, that the set of nonempty bispecial words of
$G$ is the
union of $2$, $31$ and of the
set $g(S)$ where $S$ is the set of nonempty bispecial words of $S$.
One may verify that the words of $g(S)$ are neutral.
 Since the  words $2$, $31$ are also neutral,
the set $G$ is neutral. 

It is uniformly recurrent since $S$ is uniformly recurrent
and $\tau$ is a nontrivial morphism. The set $G$ is not a tree set
since the graph $E(\varepsilon)$ is neither
acyclic nor  connected (see Figure~\ref{GepsilonJC}).
\begin{figure}[hbt]
\centering\gasset{Nadjust=wh,AHnb=0}
\begin{picture}(20,10)
\node(a1)(0,10){$1$}\node(a2)(20,0){$1$}
\node(b1)(0,5){$2$}\node(b2)(20,10){$2$}
\node(c1)(0,0){$3$}\node(c2)(20,5){$3$}

\drawedge(a1,b2){}\drawedge(a1,c2){}
\drawedge(b1,b2){}\drawedge(b1,c2){}
\drawedge(c1,a2){}
\end{picture}
\caption{The graph $E(\varepsilon)$ for the set $G$.}\label{GepsilonJC}
\end{figure}
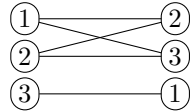

\end{example}

\subsection{Generalized extension graphs}
Let $S$ be a set. For $w\in S$, and $U,V\subset S$,
let 
$U(w)=\{\ell\in U\mid \ell w\in S\}$
 and let $V(w)=\{r\in V\mid wr\in S\}$.
The \emph{generalized extension graph} of $w$ relative to
$U,V$ is the following undirected graph $E_{U,V}(w)$. The set of vertices is
made of two disjoint copies of $U(w)$ and $V(w)$.
 The edges are the pairs $(\ell,r)$
for $\ell\in U(w)$ and $r\in V(w)$
such that $\ell wr\in S$. The extension graph $E(w)$ defined previously
corresponds
to the case where $U,V=A$.

\begin{example}
Let $S$ be the Fibonacci set. Let $w=a$, $U=\{aa,ba,b\}$ and let
$V=\{aa,ab,b\}$. The graph $E_{U,V}(w)$ is represented in
Figure~\ref{figureStrongTree}.
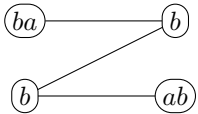
\begin{figure}[hbt]
\centering
\gasset{AHnb=0, Nadjust=wh}
\begin{picture}(20,10)
\node(b)(0,0){$b$}\node(ba)(0,10){$ba$}
\node(ab)(20,0){$ab$}\node(b')(20,10){$b$}

\drawedge(ba,b'){}\drawedge(b,b'){}\drawedge(b,ab){}
\end{picture}
\caption{The graph $E_{U,V}(w)$.}\label{figureStrongTree}
\end{figure}
\end{example}

The following property shows that in an acyclic set, not only
the extension graphs but, under appropriate hypotheses,
 all generalized extension graphs
are acyclic.
\begin{proposition}\label{PropStrongTreeCondition}
Let $S$ be an acyclic set.  For any $w\in S$, any
 finite 
suffix code  $U$ and any  finite  prefix code $V$,
 the generalized extension graph $E_{U,V}(w)$ is  acyclic.
\end{proposition}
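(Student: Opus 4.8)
The plan is to reduce the general statement about the graph $E_{U,V}(w)$ back to the acyclicity of ordinary extension graphs $E(u)$ for words $u\in S$, which holds by hypothesis. The key observation is that for $\ell\in U(w)$ and $r\in V(w)$, the condition $\ell w r\in S$ means that the longer word $\ell w r$ lies in $S$, and since $U$ is a suffix code and $V$ is a prefix code, the factorizations of the edge-words through $U$ and $V$ are essentially unique. I would argue by induction on the total length $|U|+|V|$ of the code pair (or equivalently on the number of vertices), peeling off one letter at a time from the longest element of $U$ or $V$.

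First I would set up the induction. If every element of $U$ and every element of $V$ is a single letter, then $U,V\subset A$ and $E_{U,V}(w)$ is a subgraph of the ordinary extension graph $E(w)$, hence acyclic by assumption. For the inductive step, suppose $U$ contains a word $u=a u'$ of length $\ge 2$ with $a\in A$ (the case where some element of $V$ has length $\ge 2$ is symmetric, exchanging left and right). The idea is to replace the vertex $u$ by the vertex $u'$ relative to the longer central word $aw$: writing $\ell w r = a u' w r$ reorganizes the edge at $u$ as an edge in a generalized extension graph centered at $aw$ rather than $w$. Concretely, I would compare $E_{U,V}(w)$ with the generalized extension graph $E_{U',V}(aw)$, where $U'=(U\setminus\{u\})\cup\{u'\}$ is again a finite suffix code (here one uses that $U$ is a suffix code so $u'$ is not a proper suffix of any remaining element, and conversely), noting that $aw\in S$ since $awr\in S$ for the relevant $r$.

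The main obstacle, and the point requiring care, is that the splitting is not a clean graph isomorphism: several distinct elements of $U$ may, after deletion of $u$ and insertion of $u'$, collide or interact, and one edge in the new graph can correspond to edges at $u$ together with edges at words $\ell$ already in $U$ that happen to equal $u'$ or share the relevant extensions. The clean way to handle this is to show that $E_{U,V}(w)$ is obtained from $E_{U',V}(aw)$ by an operation that cannot create a cycle: either the vertex $u$ in the old graph has exactly the same neighborhood as $u'$ in the new graph (a relabelling, preserving acyclicity), or the passage from $aw$ back to $w$ \emph{merges} the component through $a$-extensions, and one must check that this merging glues trees along at most a single shared vertex. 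Here the suffix-code property of $U$ is exactly what guarantees that two different elements of $U$ beginning with the same letter $a$ give rise, after the reduction, to vertices that cannot be confounded, so no extra cycle is introduced. I expect this bookkeeping — verifying that each reduction step transforms an acyclic generalized graph into another acyclic one without creating a cycle — to be the crux; the base case and the overall induction scheme are routine. Once the induction is complete, finitely many reduction steps bring any pair $(U,V)$ of finite suffix/prefix codes down to the single-letter case, establishing acyclicity of $E_{U,V}(w)$ in full generality.
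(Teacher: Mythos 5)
Your induction skeleton (induction on the total length of the words in $U\cup V$, base case $U,V\subset A$ where $E_{U,V}(w)$ sits inside $E(w)$, peeling one letter off a longest element of $U$, symmetric for $V$) is the same as the paper's, but the reduction step as you specify it is incoherent, and the point you defer as ``bookkeeping'' is precisely the content of the paper's key lemma, not a routine verification. First, your factorization does not match your comparison graph: if $u=au'$ with $a\in A$ the \emph{first} letter, then an edge $(u,r)$ of $E_{U,V}(w)$ reads $uwr=a(u'w)r$, so the longer central word is $u'w$, not $aw$, and the left extension is the letter $a$, not the word $u'$; if instead you meant $u=u'a$, then for every remaining vertex $\ell\in U\setminus\{u\}$ the graph $E_{U',V}(aw)$ tests the condition $\ell aw\in S$, which is not the condition $\ell w\in S$ defining its edges in $E_{U,V}(w)$, so your comparison graph has the wrong edges at all vertices except $u$. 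Second, your claim that $U'=(U\setminus\{u\})\cup\{u'\}$ is again a suffix code ``because $U$ is a suffix code'' is false: the suffix code property only forbids one element of $U$ from being a suffix of another, and it does not prevent $bu'\in U$ for some letter $b\ne a$, in which case $u'$ is a proper suffix of $bu'\in U'$ and the induction hypothesis no longer applies. The paper avoids both problems by taking $u=a\ell$ of \emph{maximal length} in $U$, setting $T=\{b\in A\mid b\ell\in U\}$ and $U'=(U\setminus T\ell)\cup\ell$, i.e., merging all elements $b\ell$ at once; maximality of $|u|$ is what makes this $U'$ a suffix code.

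Third, and this is the genuine gap: the step you yourself flag as the crux needs \emph{two} auxiliary acyclic graphs, not one. The paper's Lemma~\ref{lemmaTree} proves that if $E_{U',V}(w)$ (same center $w$, vertices $b\ell$ merged into $\ell$) and $E_{T,V}(\ell w)$ (longer center $\ell w$, single letters on the left) are both acyclic, then so is $E_{U,V}(w)$: a putative cycle either uses only left vertices of the form $t\ell$, in which case the substitution $t\ell\mapsto t$ carries it injectively to a cycle of $E_{T,V}(\ell w)$, or it meets a vertex of $U\setminus T\ell$, in which case the collapse $t\ell\mapsto\ell$ yields a closed walk in $E_{U',V}(w)$ that still contains a cycle after suppressing the repeated consecutive edges created by the identification. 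Your single graph centered at the longer word cannot absorb cycles that alternate between the special vertices $b\ell$ and untouched elements of $U$ — exactly the configuration your ``merging glues trees along at most a single shared vertex'' sentence would have to rule out, and as written that argument has no proof and would fail on such mixed cycles. So the overall strategy is the right one, but the decomposition must be repaired as above and the cycle-lifting lemma actually established before the induction closes.
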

The proof uses the following lemma.
\begin{lemma}\label{lemmaTree}
Let $S$ be a  biextendable set. Let $w\in S$ and let $U,V,T\subset S$.
Let
 $\ell\in S\setminus U$  be such that $\ell w\in S$. Set
 $U'=(U\setminus T\ell
)\cup\ell$. If the graphs $E_{U',V}(w)$
and $E_{T,V}(\ell w)$
are acyclic   then $E_{U,V}(w)$ is  acyclic.
\end{lemma}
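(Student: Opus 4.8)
The plan is to argue by contradiction: assuming $E_{U,V}(w)$ contains a (simple) cycle $C$, I will produce a cycle either in $E_{U',V}(w)$ or in $E_{T,V}(\ell w)$, contradicting one of the two acyclicity hypotheses. Call a left vertex $u$ of $E_{U,V}(w)$ \emph{marked} if $u\in T\ell$, i.e. $u=t\ell$ with $t\in T$ (such a $t$ is unique by right cancellation in $A^*$), and \emph{unmarked} otherwise. Since $\ell\notin U$, every unmarked left vertex lies in $U\setminus T\ell\subset U'$, while the new vertex $\ell$ lies in $U'(w)$ because $\ell w\in S$; no marked vertex equals $\ell$, again because $\ell\notin U$. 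Hence I can define a vertex map $\phi$ that sends each marked vertex $t\ell$ to $\ell$ and fixes every unmarked left vertex and every right vertex. The first step is to check that $\phi$ is a graph homomorphism from $E_{U,V}(w)$ to $E_{U',V}(w)$: an edge $(t\ell,r)$ gives $t\ell wr\in S$, hence $\ell wr\in S$ since $S$ is factorial, so $(\ell,r)$ is an edge of $E_{U',V}(w)$; an edge at an unmarked vertex is literally an edge of $E_{U',V}(w)$. Note that $\phi$ is injective off the marked vertices and identifies two distinct vertices only when both are marked.

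Now take a cycle $C$ of $E_{U,V}(w)$. If $C$ has no marked vertex, then $\phi$ is the identity on $C$, so $C$ is already a cycle of $E_{U',V}(w)$, a contradiction. Otherwise $C$ meets marked vertices $t_1\ell,\dots,t_m\ell$ in cyclic order, and I cut $C$ into the arcs $P_1,\dots,P_m$ joining consecutive marked vertices, each $P_i$ having only unmarked left vertices and right vertices in its interior. The crucial step is to show each $P_i$ has length $2$, that is, a single interior right vertex. Applying $\phi$, the interior of $P_i$ is fixed while its two endpoints both map to $\ell$, so $\phi(P_i)$ is a closed walk $\ell,x_1,\dots,x_k,\ell$ in $E_{U',V}(w)$ whose intermediate vertices $x_1,\dots,x_k$ are pairwise distinct (they are distinct vertices of the simple cycle $C$) and all different from $\ell$. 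As the graph is bipartite, $x_1$ and $x_k$ are right vertices, so $k\ge 1$. Since $E_{U',V}(w)$ is a forest, this closed walk cannot be a cycle; but if $k\ge 2$ then $x_1\ne x_k$ and $\ell,x_1,\dots,x_k,\ell$ is a genuine cycle, a contradiction. Thus $k=1$ and $P_i=t_i\ell,\,r_i,\,t_{i+1}\ell$ with $r_i$ a right vertex. This reduction is the main obstacle of the argument, and it is exactly where the acyclicity of $E_{U',V}(w)$ is used.

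Once every arc is reduced in this way, $C$ visits only marked and right vertices, namely $t_1\ell,r_1,t_2\ell,r_2,\dots,t_m\ell,r_m$ cyclically, with the $t_i$ pairwise distinct, the $r_i$ pairwise distinct, and $m\ge 2$. Each edge $(t_i\ell,r_i)$ and $(t_{i+1}\ell,r_i)$ of $C$ yields $t_i\ell wr_i\in S$ and $t_{i+1}\ell wr_i\in S$; in particular $t_i,t_{i+1}\in T(\ell w)$ and $r_i\in V(\ell w)$, so $(t_i,r_i)$ and $(t_{i+1},r_i)$ are edges of $E_{T,V}(\ell w)$. Therefore $t_1,r_1,t_2,r_2,\dots,t_m,r_m$ is a cycle of $E_{T,V}(\ell w)$, contradicting its acyclicity. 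Both cases being impossible, $E_{U,V}(w)$ is acyclic. The only delicate point is the length-$2$ reduction of the arcs $P_i$, which relies on the forest structure of $E_{U',V}(w)$ together with the fact that $\phi$ collapses exactly the marked vertices to $\ell$ and is injective elsewhere.
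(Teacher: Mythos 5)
Your proof is correct and takes essentially the same route as the paper: assuming a simple cycle in $E_{U,V}(w)$, you project it either into $E_{U',V}(w)$ by collapsing the vertices of $T\ell$ onto $\ell$, or into $E_{T,V}(\ell w)$ by erasing the suffix $\ell$ from an all-marked cycle, which are exactly the two identifications in the paper's argument. Your arc-by-arc length-$2$ reduction simply makes rigorous the paper's terse step of ``suppressing the possible identical successive edges created by the identification.''
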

\begin{proof}
 Assume  that
$E_{U,V}(w)$ contains a cycle $C$. If the cycle does not use a vertex
in $U'$, it defines a cycle in the graph $E_{T,V}(\ell w)$
obtained by replacing each vertex $t\ell$ for $t\in T$ by a vertex $t$.
Since $E_{T,V}(\ell w)$ is acyclic, this is impossible.
If it uses a vertex of $U'$ it defines a cycle of
the graph $E_{U',V}(w)$ obtained by replacing each possible vertex $t\ell$
by $\ell$ (and suppressing the possible identical
successive edges created by the identification). This is impossible since $E_{U',V}(w)$ is acyclic.
Thus $E_{U,V}(w)$ is  acyclic.
\end{proof}

\begin{proofof}{of Proposition~\ref{PropStrongTreeCondition}}
We show by induction on the sum of the lengths of the words in $U,V$
that for any $w\in S$, the graph $E_{U,V}(w)$ is  acyclic.

Let $w\in S$. We may assume that $U=U(w)$ and $V=V(w)$
and also that $U,V\ne\emptyset$.
If $U,V\subset A$, the property is true since $S$ is acyclic. 

Otherwise, assume for example that $U$ contains words of length at
least $2$. 
Let $u\in U$ be of maximal length. Set $u=a\ell$ with $a\in A$.
Let $T=\{b\in A\mid b\ell\in U\}$.
 Then $U'=(U\setminus T\ell
)\cup\ell$ is a suffix code and $\ell w\in S$ since $U=U(w)$. 

By induction hypothesis, the graphs $E_{U',V}(w)$
and $E_{T,V}(\ell w)$
are acyclic. 
By lemma~\ref{lemmaTree}, the graph  $E_{U,V}(w)$ is acyclic.
\end{proofof}
We prove now a similar statement concerning tree sets. 
\begin{proposition}\label{propStrongTreeConditionBis}
Let $S$ be a tree set.  For any $w\in S$, any
 finite 
$S$-maximal suffix code  $U\subset S$ 
and any  finite  $S$-maximal prefix code $V\subset S$,
 the generalized extension graph $E_{U,V}(w)$ is  a tree.
\end{proposition}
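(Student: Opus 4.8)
The plan is to mirror the structure of the proof of Proposition~\ref{PropStrongTreeCondition}, replacing ``acyclic'' with ``tree'' throughout and using the $S$-maximality hypotheses to control connectivity. First I would establish the analogue of Lemma~\ref{lemmaTree} for trees. Specifically, I would prove: if $S$ is a tree set, $w\in S$, $U,V,T\subset S$, and $\ell\in S\setminus U$ satisfies $\ell w\in S$, then with $U'=(U\setminus T\ell)\cup\ell$, if both $E_{U',V}(w)$ and $E_{T,V}(\ell w)$ are trees, then $E_{U,V}(w)$ is a tree. Acyclicity of $E_{U,V}(w)$ already follows from Lemma~\ref{lemmaTree}, so the new content is connectivity. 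The idea is that $E_{U,V}(w)$ is obtained from the disjoint pieces $E_{U',V}(w)$ and $E_{T,V}(\ell w)$ by gluing them along the common vertex $\ell$ (identified with the vertex $\varepsilon$-translate $\ell$ in the first graph and realized as the extensions $t\ell$ in the second): every vertex $t\ell\in U$ with $t\in T$ contributes edges to exactly the right-vertices $r$ with $t\ell wr\in S$, and these are exactly the edges of $E_{T,V}(\ell w)$ at vertex $t$. Connectivity of the glued graph then follows because the two trees share the articulation vertex $\ell$ and together cover all vertices and edges of $E_{U,V}(w)$.

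Next I would run the same induction as in the proof of Proposition~\ref{PropStrongTreeCondition}, on the sum of the lengths of the words in $U$ and $V$. The base case $U,V\subset A$ is precisely the hypothesis that $S$ is a tree set (after reducing to $U=U(w)$, $V=V(w)$, which here requires the $S$-maximality to guarantee $U(w)=L(w)$ and $V(w)=R(w)$, so that $E_{U,V}(w)=E(w)$ is a tree). For the inductive step, take $u=a\ell\in U$ of maximal length, set $T=\{b\in A\mid b\ell\in U\}$, and form $U'=(U\setminus T\ell)\cup\ell$, which is again an $S$-maximal suffix code; then $E_{U',V}(w)$ and $E_{T,V}(\ell w)$ are trees by the induction hypothesis, and the tree version of Lemma~\ref{lemmaTree} gives that $E_{U,V}(w)$ is a tree.

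The main obstacle will be verifying that the $S$-maximality hypotheses are genuinely used and propagated correctly, since they are what distinguishes this statement from Proposition~\ref{PropStrongTreeCondition}. For a tree (unlike a mere acyclic graph) we need the graph to be \emph{connected}, and connectivity can fail if the vertex sets $U(w)$ or $V(w)$ do not exhaust all left/right extensions of $w$ in $S$; the $S$-maximality of $U$ and $V$ is exactly what forces $U(w)=L(w)$ and $V(w)=R(w)$ via right/left completeness, so that no extension of $w$ is omitted and the extension graph of $S$ is faithfully represented. I would therefore devote care to the reduction $U=U(w)$, $V=V(w)$ and to checking that $U'$ inherits $S$-maximality, and to the gluing argument showing that identifying the vertex $\ell$ in the two trees yields a connected acyclic graph (a tree) rather than merely an acyclic one. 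The acyclicity half is already in hand from the previous proposition, so the delicate part is purely the connectivity bookkeeping at the shared vertex $\ell$.
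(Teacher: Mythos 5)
Your architecture matches the paper's (acyclicity imported from Proposition~\ref{PropStrongTreeCondition}, then an induction on the total length of the words of $U$ and $V$ with a gluing lemma supplying connectivity), but there is a genuine gap at the one point where the tree case actually differs from the acyclic case. In your inductive step you keep $T=\{b\in A\mid b\ell\in U\}$ from the acyclic proof and assert that $E_{T,V}(\ell w)$ is a tree ``by the induction hypothesis''. The induction hypothesis applies only to finite $S$-\emph{maximal} codes, and $T$ is in general not an $S$-maximal suffix code (a suffix code consisting of letters is $S$-maximal only if it contains every letter occurring in $S$). This matters precisely for connectivity: if some letter $b$ with $b\ell w\in S$ had $b\ell\notin U$, then $E_{T,V}(\ell w)$ would be a proper subgraph of $E_{A,V}(\ell w)$, and a subgraph inherits acyclicity (which is why an arbitrary $T$ is harmless in Lemma~\ref{lemmaTree}) but not connectedness. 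The paper's Lemma~\ref{lemmaTreeBis} is accordingly stated with the full alphabet: it takes $U'=(U\setminus A\ell)\cup\ell$, assumes $E_{A,V}(\ell w)$ connected (and $A$ is trivially an $S$-maximal suffix code, so the induction hypothesis covers it), and adds the hypothesis $A\ell\cap S\subset U$, which in the inductive step is derived from the $S$-maximality of $U$ given the choice of $u=a\ell$ of maximal length. That inclusion is the third, and decisive, use of $S$-maximality, beyond the two you identified (the base case giving $U(w)=L(w)$, $V(w)=R(w)$, and the $S$-maximality of $U'$); without it your $T$-version of the lemma cannot be fed by the induction, and this is exactly the obstacle you announced as ``the main obstacle'' without resolving it.

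Two smaller inaccuracies. First, the gluing picture is off as stated: since $\ell\notin U$, the word $\ell$ is not a vertex of $E_{U,V}(w)$ and cannot serve as a shared articulation vertex. What actually happens is that the vertex $\ell$ of $E_{U',V}(w)$ is deleted and replaced by the whole graph $E_{A,V}(\ell w)$ (left vertices $b$ renamed $b\ell$), glued along the set $V(\ell w)$ of right vertices, which is exactly the neighborhood of $\ell$ in $E_{U',V}(w)$; carried out this way your argument is sound, and one must also note that left-extendability of $S$ provides some $a$ with $a\ell w\in S$, so the glued-in piece is nonempty --- the paper's proof of Lemma~\ref{lemmaTreeBis} opens with precisely this remark and then establishes connectivity by a three-step path-rerouting argument rather than a deletion-and-gluing count. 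Second, you cannot ``reduce to $U=U(w)$, $V=V(w)$'' as in the acyclic proof, because $U(w)$ need not be an $S$-maximal suffix code; the paper's base case instead assumes only $U(w),V(w)\subset A$ and invokes the maximality of $U$ and $V$ themselves to conclude $U(w)=L(w)$ and $V(w)=R(w)$.
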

The proof uses the following lemma, analogous to Lemma~\ref{lemmaTree}.
\begin{lemma}\label{lemmaTreeBis}
Let $S$ be a  biextendable set. Let $w\in S$ and let $U,V\subset S$.
Let
 $\ell\in S\setminus U$  be such that $\ell w\in S$ and $A\ell\cap
S\subset U$. Set
 $U'=(U\setminus A\ell
)\cup\ell$. If the graphs $E_{U',V}(w)$
and $E_{A,V}(\ell w)$
are connected   then $E_{U,V}(w)$ is  connected.
\end{lemma}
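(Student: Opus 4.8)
The plan is to exhibit two natural graph identifications relating $E_{U,V}(w)$ to the two graphs assumed connected, and then to argue that connectivity is preserved. Write $L=\{a\ell\mid a\in A,\ a\ell w\in S\}$ for the set of left vertices of $E_{U,V}(w)$ of the form $a\ell$; since $\ell w\in S$ and $S$ is left-extendable, $L$ is nonempty, and the hypothesis $A\ell\cap S\subset U$ guarantees that every such $a\ell$ is indeed a vertex of $E_{U,V}(w)$ (as $a\ell w\in S$ forces $a\ell\in S$, hence $a\ell\in U$). First I would note that the right-vertex set $V(w)$ is common to both $E_{U,V}(w)$ and $E_{U',V}(w)$, and that the left vertices of $E_{U',V}(w)$ are exactly $(U(w)\setminus A\ell)\cup\{\ell\}$. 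The first identification is that $E_{U',V}(w)$ is obtained from $E_{U,V}(w)$ by collapsing the whole block $L$ to the single vertex $\ell$: indeed an edge $(\ell,r)$ is present in $E_{U',V}(w)$ iff $\ell wr\in S$, which by factoriality and left-extendability is equivalent to $a\ell wr\in S$ for some $a$, that is, to the existence of an edge $(a\ell,r)$ in $E_{U,V}(w)$ with $a\ell\in L$.

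The second identification is that the subgraph of $E_{U,V}(w)$ induced by $L$ together with its neighbours is isomorphic to $E_{A,V}(\ell w)$, via $a\ell\mapsto a$ and $r\mapsto r$. Here the bijection between $L$ and the left-vertex set $A(\ell w)=\{a\mid a\ell w\in S\}$ relies on $A\ell\cap S\subset U$, and the matching of edges is immediate since both are governed by the condition $a\ell wr\in S$; the right-vertex sets correspond because $\ell wr\in S$ iff $a\ell wr\in S$ for some $a$. Consequently the connectedness of $E_{A,V}(\ell w)$ tells us precisely that the block $L$ lies in a single connected component of $E_{U,V}(w)$.

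With these two facts in hand, the conclusion follows from a general graph-theoretic principle: if a nonempty vertex set $W$ of a graph $G$ induces a connected subgraph and the quotient $G/W$ obtained by collapsing $W$ to a point is connected, then $G$ itself is connected. I would prove this by lifting: given two vertices of $E_{U,V}(w)$, take a simple path between their images in the connected graph $E_{U',V}(w)$ and lift it edge by edge, replacing each edge $(u,r)$ with $u\neq\ell$ by the identical edge of $E_{U,V}(w)$, and each edge incident to $\ell$ by an edge incident to some concrete vertex $a\ell\in L$. The one delicate point, which I expect to be the main obstacle, is the passage of the path through the collapsed vertex $\ell$: the incoming and outgoing lifted edges may meet $L$ at two different vertices $a\ell$ and $a'\ell$, and one must bridge them inside $E_{U,V}(w)$. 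This is exactly what the connectivity of the block $L$ (the second identification) supplies. It is also where the hypotheses differ essentially from those of Lemma~\ref{lemmaTree}: taking $T=A$ together with $A\ell\cap S\subset U$ is what forces $U'$ to retain no vertex of the form $a\ell$, so that the collapse is clean and the block realises all of $E_{A,V}(\ell w)$ rather than a proper, possibly disconnected, subgraph.
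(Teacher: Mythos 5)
Your proof is correct and is in substance the paper's own argument: the paper's Steps 1--3 are exactly your lifting of paths from the quotient $E_{U',V}(w)$ back to $E_{U,V}(w)$, with the bridging inside the block $L$ supplied by paths coming from $E_{A,V}(\ell w)$ (the paper merely routes everything through one fixed vertex $a\ell$ rather than collapsing $L$ explicitly). One wording fix: since $L$ is an independent set in the bipartite graph it does not \emph{induce} a connected subgraph, so the general principle should be stated with the hypothesis you actually use and actually establish via your second identification --- namely that $L$ lies in a single connected component of $E_{U,V}(w)$ --- under which your edge-by-edge lifting argument goes through verbatim.
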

\begin{proof}
Since $S$ is left extendable, there is a letter $a$ such that $a\ell
w\in S$ and thus $a\ell\in U(w)$.
We proceed by steps.

Step 1. As a preliminary step, let us show that for each $b\in A$
such that $b\ell w\in S$,
 and each  $v\in V(\ell w)$,
there is a path from $b\ell$ to $v$
in  $E_{U,V}(w)$. Indeed,  since the graph $E_{A,V}(\ell
w)$
is connected there is a path from $b$ to $v$ in this graph.
Thus, since $b\ell\in U(w)$, there is a path from $b\ell$ to $v$ in 
$E_{U,V}(w)$.

Step 2. As a second step, let us show that for any 
$m\in U'(w)\setminus\ell$ and $v\in V(w)$,
there is a path from $m$ to $v$ in $E_{U,V}(w)$.
Indeed there is a path from $m$ to $v$ in $E_{U',V}(w)$.
For each  edge of this
path of the form $(\ell,s)$, $s$ is also in $V(\ell w)$ and thus, by Step 1, 
there is a path from $a\ell$ to $s$
in the graph $E_{U,V}(w)$. Thus there is a path from $m$ to $v$
in $E_{U,V}(w)$.

Step 3. For each $b \in A$
such that $b\ell\in U(w)$, for each $v \in V(w)$,
there is a path from $b\ell$ to $v$ in $E_{U,V}(w)$.
 Indeed, since $E_{A,V}(\ell w)$
is connected, there is a path from $b$ to $a$ in $E_{A,V}(\ell w)$,
 thus a path from $b\ell$ to $a\ell$ in 
$E_{U,V}(w)$. Then there is a path
 from $\ell$ to $v$ in $E_{U',V}(w)$ and, in the same way as in Step 2, there 
 is a path from
$a\ell$ to $v$ in $E_{U,V}(w)$.

Consider now $m\in U(w)$ and $v\in V(w)$. If $m\notin A\ell$, then $m\in U'(w)\setminus\ell$
and thus, by Step 2, there is a path from $m$ to $v$
in $E_{U,V}(w)$. Next, assume that  $m=b\ell$ with $b\in A$.
By Step 3, there is a path from $m$ to $v$ in
$E_{U,V}(w)$. This shows that the graph $E_{U,V}(w)$ is connected.
\end{proof}

\begin{proofof}{of Proposition~\ref{propStrongTreeConditionBis}}
The fact that $E_{U,V}(w)$ is acyclic follows from 
Proposition~\ref{PropStrongTreeCondition}.

We show by induction on the sum of the lengths of the words in $U,V$
that for any $w\in S$, the graph $E_{U,V}(w)$ is connected.

Assume first that $U(w),V(w)\subset A$. Since $U$ is an $S$-maximal
suffix code, we have $U(w)=L(w)$. Similarly, $V(w)=R(w)$. Thus
the property is true since $S$ is a tree set. 

Otherwise, assume for example that $U(w)$ contains words of length at
least $2$. 
Let $u\in U(w)$ be of maximal length. Set $u=a\ell$ with $a\in A$.
 Then $U'=(U\setminus A\ell
)\cup\ell$ is an $S$-maximal suffix code and $\ell w\in S$ since
 $a\ell\in U(w)$. 
Moreover, we have $A\ell \cap S\subset U$ since $U$ is an $S$-maximal
suffix code.
Thus $\ell$ satisfies the hypotheses of Lemma~\ref{lemmaTreeBis}.

By induction hypothesis, the graphs $E_{U',V}(w)$
and $E_{A,V}(\ell w)$
are connected. 
By Lemma~\ref{lemmaTreeBis}, the graph  $E_{U,V}(w)$ is connected.
\end{proofof}

Let $S$ be a factorial set and let $f$ be a coding morphism for a
finite  bifix code $X\subset S$. The set $f^{-1}(S)$ is called
a \emph{bifix decoding} of $S$. When $X$ is an $S$-maximal bifix code,
it is called a \emph{maximal bifix decoding} of $S$.

\begin{theorem}\label{decodingAcyclic}
Any biextendable set which is the bifix decoding of an acyclic set is acyclic.
\end{theorem}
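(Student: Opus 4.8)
The plan is to reduce the extension graphs of the decoded set to generalized extension graphs of $S$ and then invoke Proposition~\ref{PropStrongTreeCondition}. Write $T=f^{-1}(S)$ and assume $T$ is biextendable (as hypothesized). I would first note that $T$ is factorial: if $w=w_1w_2w_3\in T$, then $f(w_2)$ is a factor of $f(w)\in S$, so $f(w_2)\in S$ and hence $w_2\in T$. Thus the extension graphs $E_T(w)$ are well defined, and it remains only to show that each of them is acyclic in order to conclude that $T$ is an acyclic set.

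The heart of the argument is a graph isomorphism. Fix $w\in T$ and set $u=f(w)$, which lies in $S$ by the definition of $T$. I claim that the restriction $f|_B\colon B\to X$, which is a bijection since $f$ is a coding morphism for $X$, induces an isomorphism $E_T(w)\cong E_{X,X}(u)$, where $E_{X,X}(u)$ is the generalized extension graph of $u$ in $S$ relative to $U=V=X$. Writing $x_\beta=f(\beta)$ for $\beta\in B$, one has $\beta w\in T$ if and only if $x_\beta u=f(\beta w)\in S$, so $f|_B$ carries the left vertices $L_T(w)$ bijectively onto $\{x\in X\mid xu\in S\}$, and symmetrically on the right. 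For edges, $(\beta,\gamma)$ belongs to $E_T(w)$ iff $\beta w\gamma\in T$ iff $x_\beta u x_\gamma=f(\beta w\gamma)\in S$, which is exactly the condition for $(x_\beta,x_\gamma)$ to be an edge of $E_{X,X}(u)$ (membership of $x_\beta u$ and $ux_\gamma$ in $S$ then follows by factoriality). So vertices and edges correspond under $f|_B$.

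Once this translation is in place, I would finish by observing that $X$, being a bifix code, is simultaneously a finite suffix code and a finite prefix code, so Proposition~\ref{PropStrongTreeCondition} applies with $U=V=X$ and gives that $E_{X,X}(u)$ is acyclic; through the isomorphism $E_T(w)$ is acyclic as well. Since $w\in T$ is arbitrary and $T$ is biextendable, $T$ is acyclic. The only step requiring genuine care is checking that the edge correspondence is exact in both directions, and this is precisely where the bifix nature of $X$ is essential: the suffix property lets $X$ serve as the left code $U$ and the prefix property as the right code $V$, so that the hypotheses of Proposition~\ref{PropStrongTreeCondition} are met. No induction or complexity estimate is needed beyond this translation, since the real combinatorial work has already been done in Proposition~\ref{PropStrongTreeCondition}.
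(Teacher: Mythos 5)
Your proposal is correct and follows essentially the same route as the paper: identify $E_{f^{-1}(S)}(w)$ with the generalized extension graph $E_{X,X}(f(w))$ via the bijection $f|_B$, then apply Proposition~\ref{PropStrongTreeCondition} with $U=V=X$, using that a finite bifix code is both a finite suffix code and a finite prefix code. The paper states the isomorphism $E(w)\cong E_{X,X}(f(w))$ without detail, so your explicit verification of the vertex and edge correspondences (including the use of factoriality) simply fills in what the paper leaves to the reader.
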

\begin{proof}
Let $S$ be an acyclic set and let $f:B^*\rightarrow A^*$ be a coding
morphism for a finite bifix code $X\subset S$ such that
$f^{-1}(S)$ is biextendable.
Let $u\in f^{-1}(S)$ and let $v=f(u)$. 
 Since $X$
is a finite bifix code, it is both a
suffix code and a prefix code. Thus the generalized extension 
graph $E_{X,X}(v)$ is acyclic  by
Proposition~\ref{PropStrongTreeCondition}. Since $E(u)$ is isomorphic
with $E_{X,X}(v)$, it is also acyclic. Thus $f^{-1}(S)$ is acyclic.
\end{proof}
The previous statement is not satisfactory because of the
assumption that $f^{-1}(S)$ is biextendable which is added to
obtain the conclusion. The following example shows that
the condition is necessary.
\begin{example}\label{exampleNotExtendable}
Let $S$ be the Fibonacci set and let $f$ be the coding morphism
for $X=\{aa,ab\}$ defined by $f(u)=aa$, $f(v)=ab$. Then 
$f^{-1}(S)$ is the finite set $\{u,v,vu,vv,vvu\}$
and thus not biextendable. Note however that for any 
$w\in f^{-1}(S)$, the graph $E(w)$ is acyclic.
\end{example}
One may verify that a sufficient condition for $f^{-1}(S)$
to be biextendable is that  $X$ is an $S$-maximal
prefix code and an $S$-maximal suffix code (when $S$ is recurrent,
this is equivalent to the fact that $X$ is an $S$-maximal bifix code).

The  following result is a consequence of
Proposition~\ref{propStrongTreeConditionBis}.
\begin{theorem}\label{InverseImageTree}
Any maximal bifix decoding of a recurrent tree set is a tree set.
\end{theorem}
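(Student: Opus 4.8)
The plan is to mirror the proof of Theorem~\ref{decodingAcyclic}, replacing the acyclicity statement of Proposition~\ref{PropStrongTreeCondition} by the stronger tree statement of Proposition~\ref{propStrongTreeConditionBis}. Let $S$ be a recurrent tree set and let $f\colon B^*\rightarrow A^*$ be a coding morphism for a finite $S$-maximal bifix code $X\subset S$, so that $f^{-1}(S)$ is the given maximal bifix decoding. First I would fix $u\in f^{-1}(S)$, set $v=f(u)\in S$, and show that the extension graph $E(u)$ computed in $f^{-1}(S)$ is isomorphic to the generalized extension graph $E_{X,X}(v)$ computed in $S$. This is the only genuinely combinatorial step, and it is routine: since $f$ maps $B$ bijectively onto $X$, a letter $c\in B$ lies in $L(u)$ if and only if $f(cu)=f(c)v\in S$, that is, if and only if $f(c)$ is a left vertex of $E_{X,X}(v)$; symmetrically $R(u)$ corresponds to the right vertices; and $(c,d)$ is an edge of $E(u)$ exactly when $f(c)\,v\,f(d)\in S$, which is the edge condition in $E_{X,X}(v)$. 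Thus $c\mapsto f(c)$ is the desired graph isomorphism.

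Once this isomorphism is established, it suffices to show that $E_{X,X}(v)$ is a tree, and for this I would invoke Proposition~\ref{propStrongTreeConditionBis} with $U=V=X$. The hypotheses of that proposition require $X$ to be simultaneously a finite $S$-maximal suffix code and a finite $S$-maximal prefix code. Here $X$ is by assumption a finite $S$-maximal bifix code, and since $S$ is recurrent the result recalled in the preliminaries (Theorem~4.2.2 of~\cite{BerstelDeFelicePerrinReutenauerRindone2012}) shows that a finite bifix code is $S$-maximal as a bifix code if and only if it is an $S$-maximal prefix code; by the left--right symmetric statement it is then also an $S$-maximal suffix code. Hence $X$ meets both hypotheses, Proposition~\ref{propStrongTreeConditionBis} applies, and $E_{X,X}(v)$, and therefore $E(u)$, is a tree.

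It then remains only to check that $f^{-1}(S)$ is biextendable, which is part of the definition of a tree set. I would obtain this from the sufficient condition noted after Example~\ref{exampleNotExtendable}: since $X$ is an $S$-maximal prefix code and an $S$-maximal suffix code, the set $f^{-1}(S)$ is biextendable. Combining biextendability with the fact that every $E(u)$ is a tree yields that $f^{-1}(S)$ is a tree set.

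I expect the main obstacle to be bookkeeping rather than depth. The substantive work sits entirely in Proposition~\ref{propStrongTreeConditionBis}, which is already available; here one must verify carefully that $S$-maximality of the \emph{bifix} code $X$ transfers to $S$-maximality as both a prefix code and a suffix code. This is exactly where recurrence of $S$ is used, and it is the reason the statement is confined to \emph{maximal} bifix decodings of \emph{recurrent} tree sets, rather than taking the more general biextendable form of Theorem~\ref{decodingAcyclic}: without $S$-maximality on both sides one cannot apply Proposition~\ref{propStrongTreeConditionBis}, and one even loses the guarantee that $f^{-1}(S)$ is biextendable.
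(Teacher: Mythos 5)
Your proposal is correct and takes essentially the same route as the paper: both arguments rest on the isomorphism $E(u)\cong E_{X,X}(v)$ and on applying Proposition~\ref{propStrongTreeConditionBis} with $U=V=X$, using recurrence of $S$ to convert $S$-maximality of the bifix code $X$ into $S$-maximality as a prefix code and as a suffix code. The only divergence is that where you cite the ``one may verify'' remark after Example~\ref{exampleNotExtendable} for biextendability of $f^{-1}(S)$, the paper carries out that verification explicitly (extend $v$ to $rvs\in S$ with $r,s$ longer than every word of $X$, then use right and left $S$-completeness of $X$ to extract a suffix $r'$ of $r$ and a prefix $s'$ of $s$ in $X$, so that $f^{-1}(r')\,u\,f^{-1}(s')\in f^{-1}(S)$), a short step you would need to spell out for a self-contained proof.
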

\begin{proof}  Let $f:B\rightarrow X$
 be a coding morphism for a finite $S$-maximal bifix code $X$.
Since $S$ is recurrent, it is
biextendable. It implies that $f^{-1}(S)$ is also biextendable. Indeed,
let $u\in f^{-1}(S)$ and let $v=f(u)$. Let $r,s$ be words of $S$
longer than all words of $X$ such that $rvs\in S$. Let
$r'$ (resp. $s'$) be the suffix of $r$ (resp. the prefix of $s$) which
is in $X$. Then $f^{-1}(r')uf^{-1}(s')$ is in $f^{-1}(S)$.
This shows that $f^{-1}(S)$ is biextendable.

Let $u\in f^{-1}(S)$ and let $v=f(u)$.
Since $S$ is a tree set,
it satisfies 
Proposition~\ref{propStrongTreeConditionBis}. Since $S$ is recurrent
and $X$
is a finite $S$-maximal bifix code, $X$ is both an $S$-maximal
suffix code and an $S$-maximal prefix code. Thus the
graph $E_{X,X}(v)$ is a tree. Since $E(u)$ is isomorphic
with $E_{X,X}(v)$, it is also a tree. Thus $f^{-1}(S)$ is a tree set.
\end{proof}
We have no example of a maximal bifix decoding of a recurrent tree
set which is not recurrent.
\begin{example}\label{exampleF2}
Let $S$ be the Fibonacci set and let $X=A^2\cap S=\{aa,ab,ba\}$.
Let $B=\{u,v,w\}$ and let
$f$ be the coding morphism for $X$ defined by $f(u)=aa$,
$f(v)=ab$ and $f(w)=ba$. Then the set $f^{-1}(S)$ is a recurrent tree set
which is actually a regular interval exchange set 
(see~\cite{BertheDeFeliceDolceLeroyPerrinReutenauerRindone2013}).

\end{example}

\section{Return words in tree sets}\label{sectionReturnTreeSets}
We study sets of first return words in tree sets. We first show
that if $S$ is a recurrent connected set, the group described by
any Rauzy graph of $S$ containing the alphabet $A$,
with respect to some vertex is the free group
on $A$ (Theorem~\ref{proposition3}). Next, we prove that
in a uniformly recurrent tree set containing $A$, the set of first return words
to any word of $S$ is a basis of the free group on $A$ (Theorem~\ref{theoremJulien}).
\subsection{Stallings foldings of Rauzy graphs}\label{sectionRauzyGraphs}
We first introduce the notion of a Rauzy graph (for a more detailed
exposition, see~\cite{BertheRigo2010}).
Let $S$ be a factorial set.
The \emph{Rauzy graph} of $S$ of order $n\ge 0$ is the following labeled
graph $G_n(S)$. Its vertices are the words in the set $S\cap A^n$.
Its edges are the triples $(x,a,y)$ for all $x,y\in S\cap A^n$
and $a\in A$ such that $xa\in S\cap Ay$.

Let $u\in S\cap A^n$. The following properties follow easily from the definition
of the Rauzy graph.
\begin{enumerate}
\item[(i)] For any word $w$ such that $uw\in S$,
there is a path  labeled $w$ in $G_n(S)$ from $u$ to the suffix
of length $n$ of $uw$.

\item[(ii)] Conversely, the label of any path of length at most $n+1$ in $G_n(S)$
is in $S$.
\end{enumerate}

When $S$ is recurrent, all Rauzy graph $G_n(S)$
are strongly connected. Indeed, let $u,w\in S\cap A^n$.
Since $S$ is recurrent, there is a $v\in S$ such
that $uvw\in S$. Then there is a path
in $G_n(S)$ from $u$ to $w$ labeled $vw$ by property (i) above.

The Rauzy graph $G_n(S)$ of a recurrent set $S$ with a distinguished
vertex $v$ can be considered as a simple automaton $\A=(Q,v,v)$ with
set of states $Q=S\cap A^n$ (see Section~\ref{sectionAutomata}).

Let $G$ be a labeled graph on a set $Q$ of vertices.
The group described by $G$ with respect to a vertex $v$
is the subgroup described by the simple automaton $(Q,v,v)$.
 We will prove the following statement.
\begin{theorem}\label{proposition3}
Let $S$ be a recurrent connected set containing the
alphabet $A$.
The group described by a Rauzy graph of $S$
with respect to any vertex is the free group on $A$.
\end{theorem}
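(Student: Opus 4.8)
The plan is to prove the apparently stronger statement that for every $n\ge 0$ the group $H_n$ described by $G_n(S)$ (with respect to any vertex) is the free group $F_A$, by induction on $n$. The base case is immediate: $G_0(S)$ has the single vertex $\varepsilon$, and since $S$ contains $A$ it carries exactly one loop labelled $a$ for each $a\in A$, so the subgroup it describes is $\langle A\rangle=F_A$. Because $S$ is recurrent, each $G_n(S)$ is strongly connected, so different base vertices describe conjugate subgroups; as the common value will turn out to be all of $F_A$, the choice of vertex is irrelevant, and $H_n$ is in any case a subgroup of $F_A$.

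For the inductive step I would compare two consecutive Rauzy graphs through the map $\delta\colon G_{n+1}(S)\to G_n(S)$ sending a length-$(n+1)$ vertex to its suffix of length $n$, and an edge $(w,a,w')$ to $(\delta(w),a,\delta(w'))$. This $\delta$ is label-preserving, and because $S$ is biextendable it is surjective on vertices and on edges (every $ua\in S$ with $|u|=n$ lifts, since $ua$ is left-extendable). The fibre $\delta^{-1}(u)$ over a vertex $u$ is $\{cu\mid c\in L(u)\}$, one ``sheet'' per left extension of $u$. Projecting closed generalized paths along $\delta$ gives at once $H_{n+1}\subseteq H_n$; the whole difficulty is the reverse inclusion $H_n\subseteq H_{n+1}$, that is, lifting a closed generalized path of $G_n(S)$ to one of $G_{n+1}(S)$ carrying the same label.

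The key device, and the point where connectedness enters, is a \emph{sheet change} of trivial label. If $c,c'\in L(u)$ have a common right-neighbour $b$ in the extension graph $E(u)$ (i.e. $cub,c'ub\in S$), then in $G_{n+1}(S)$ one has edges $cu\xrightarrow{b}ub$ and $c'u\xrightarrow{b}ub$, so the generalized path $cu\xrightarrow{b}ub\xleftarrow{b}c'u$ joins the sheets $cu$ and $c'u$ with reduced label $bb^{-1}=1$. Since $E(u)$ is connected, chaining such moves along a path of $E(u)$ connects any two sheets over $u$ by a generalized path of label $1$. Consequently, while lifting a path of $G_n(S)$, whenever the current sheet does not allow crossing the next (forward or backward) edge, I first perform a label-$1$ sheet change to a sheet that does; such a sheet always exists, because the letter to be read is, by biextendability, adjacent in $E(u)$ to some left extension and hence lies in the unique connected component. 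Starting and ending at a fixed sheet $\tilde v$ over the base vertex $v$, and closing up by a final label-$1$ sheet change (again available since $E(v)$ is connected), one lifts the whole closed path with unchanged reduced label. This yields $H_n\subseteq H_{n+1}$, hence $H_{n+1}=H_n$, and the induction gives $H_n=F_A$.

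The main obstacle is precisely this lifting argument: one must verify that a label-$1$ sheet change can always be inserted so that both the forward and the backward crossing of each edge become possible, and that the cumulative label is unaffected (all inserted detours reduce to $1$). Connectedness of every $E(u)$ is exactly what makes the fibres ``label-$1$ connected'' and is therefore indispensable; if some $E(u)$ were disconnected the lift could be trapped in a proper subgroup. I would also note that this construction is the reverse of the Stallings foldings of $G_n(S)$: each move $cu\xrightarrow{b}ub\xleftarrow{b}c'u$ undoes an identification that a folding would make, so an equivalent formulation is that folding $G_n(S)$ collapses it to the one-vertex bouquet on $A$, whose described group is $F_A$.
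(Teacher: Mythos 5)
Your proof is correct, and it is essentially the ``unfolding'' dual of the paper's argument rather than a reproduction of it. The paper works downwards: it introduces the equivalence $\theta_n$ identifying $ax$ with $bx$ whenever $a,b\in L(x)$ are joined by a path in $E(x)$, proves that $G_n(S)/\theta_n$ is isomorphic to $G_{n-1}(S)$ (Proposition~\ref{propRauzyGraphs}, where connectedness enters through a counting argument: the $\theta_n$-class of $ax$ is the full fibre $\{bx\mid b\in L(x)\}$), and then observes that this quotient is realized by successive Stallings foldings---precisely your chains $a_0x,a_1x,\ldots,a_kx$ along a path of $E(x)$---which do not change the described group; induction then descends to the one-vertex bouquet $G_0(S)$. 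You go upwards instead, proving $H_{n+1}=H_n$ by projecting and lifting generalized paths, your label-$1$ sheet changes being the inverses of those foldings. This bypasses the quotient-isomorphism lemma and its counting argument entirely, so your proof is more self-contained; the paper's formulation buys a reusable structural fact about the tower of Rauzy graphs (and makes visible, via the failure of Proposition~\ref{propRauzyGraphs} for the Chacon set, exactly where connectedness is needed).

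One step of your lifting must be stated more carefully: the backward edges. If the path in $G_n(S)$ traverses an edge $(u'',b,u)$ backwards, the issue is not whether the current sheet admits an incoming $b$-edge---every sheet $cu$ does, since $cu$ is left-extendable---so your criterion ``the current sheet does not allow crossing'' never triggers a sheet change here. The real constraint is where the backward step lands: if $(w,b,\tilde u)$ is an edge of $G_{n+1}(S)$ with $\delta(w)=u''$, then necessarily $\tilde u=u''b$, the unique sheet over $u$ lifting that edge; from any other sheet $cu$ the backward $b$-step lands over a vertex of $G_n(S)$ different from $u''$, after which the lift can get stuck, since label-$1$ moves never leave a fibre. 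So before each backward traversal you must sheet-change to the specific vertex $u''b\in\delta^{-1}(u)$. Fortunately this vertex exists ($u''b\in S$ because the edge belongs to $G_n(S)$), its first letter lies in $L(u)$, and connectedness of $E(u)$ reaches it from the current sheet, so your argument does close---but your stated justification (``the letter to be read is adjacent in $E(u)$ to some left extension'') covers only the forward case and should be replaced by this one for backward steps.
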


A \emph{morphism} $\varphi$ from a labeled graph $G$ onto a labeled graph
$H$ is a map from the set of vertices of $G$ onto the set
of vertices of $H$ such that $(u,a,v)$ is an edge of $H$ if and only
if there is an edge $(p,a,q)$ of $G$ such that $\varphi(p)=u$
and $\varphi(q)=v$. An \emph{isomorphism} of labeled graphs
 is a bijective morphism.

The \emph{quotient} of a labeled graph $G$ by an equivalence $\theta$,
denoted $G/\theta$,
is the graph with vertices the set of equivalence classes of $\theta$
and an edge from the class of $u$ to the class of $v$
labeled $a$ if there is an edge labeled $a$ from a vertex $u'$
equivalent to $u$ to a vertex $v'$ equivalent to $v$.
The map from a vertex of $G$ to its equivalence class is
a morphism from $G$ onto $G/\theta$.

We consider on a Rauzy graph $G_n(S)$ the equivalence $\theta_n$
formed by the pairs
$(u,v)$ with  $u=ax$, $v=bx$, $a,b\in L(x)$
such that there is
a path from $a$ to $b$ in the extension graph $E(x)$ (and more precisely
from the vertex corresponding to $a$ to the vertex corresponding
to $b$ in the copy corresponding to $L(x)$ in the bipartite graph
$E(x)$).
\begin{proposition}\label{propRauzyGraphs}
If $S$ is connected, for each $n\ge 1$, the quotient
of $G_n(S)$ by the equivalence $\theta_n$ is isomorphic to $G_{n-1}(S)$.
\end{proposition}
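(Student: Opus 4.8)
The plan is to realize the isomorphism through the ``forget the first letter'' map $\pi:S\cap A^n\to S\cap A^{n-1}$ which sends a length-$n$ word $u=ax$ (with $a\in A$, $x\in A^{n-1}$) to its suffix $x$ of length $n-1$. I would first argue that $\pi$ is a surjective morphism of labeled graphs from $G_n(S)$ onto $G_{n-1}(S)$ whose fibers are exactly the classes of $\theta_n$; granting this, $\pi$ factors through a label-preserving vertex bijection $\bar\pi:G_n(S)/\theta_n\to G_{n-1}(S)$, and the equivalence built into the definition of a graph morphism, together with the definition of the quotient graph, upgrades $\bar\pi$ to an isomorphism with essentially no extra work.

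The identification of fibers with $\theta_n$-classes is where connectedness is used. The fiber $\pi^{-1}(x)$ is the set of words $bx$ with $b\in L(x)$, and by definition $ax$ and $bx$ are $\theta_n$-equivalent exactly when $a$ and $b$ lie in the same connected component of $E(x)$. Since $S$ is connected, $E(x)$ is connected for every $x\in S\cap A^{n-1}$, so $L(x)$ is a single component and $\pi^{-1}(x)$ is precisely one $\theta_n$-class. Surjectivity of $\pi$ onto $S\cap A^{n-1}$ follows from biextendability (a connected set is biextendable by definition): every $x\in S\cap A^{n-1}$ is left-extendable and hence has nonempty fiber. The forward edge condition is then a direct unwinding: if $(p,a,q)$ is an edge of $G_n(S)$, so that $pa\in S$ and $q$ is the length-$n$ suffix of $pa$, factoriality gives that the length-$(n-1)$ suffix of $p$, followed by $a$, again lies in $S$, and one checks that $(\pi(p),a,\pi(q))$ is an edge of $G_{n-1}(S)$ carrying the same label.

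The one step needing genuine care is the converse edge condition, i.e.\ lifting an edge of $G_{n-1}(S)$ to $G_n(S)$. Given an edge $(\bar u,a,\bar v)$ of $G_{n-1}(S)$, with $\bar u a\in S$ and $\bar v$ its length-$(n-1)$ suffix, I cannot simply extend $\bar u$ on the left to length $n$ and hope the $a$-transition survives: a left extension $c\bar u\in S$ need not satisfy $c\bar u a\in S$. The correct move is to apply left-extendability to the \emph{longer} word $\bar u a$, choosing $c\in L(\bar u a)$; then $p=c\bar u$ and $q=\bar u a$ both lie in $S\cap A^n$, $(p,a,q)$ is an edge of $G_n(S)$, and $\pi(p)=\bar u$, $\pi(q)=\bar v$. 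This lifting is the main obstacle; once it is in place, the vertex bijection and both edge directions combine, via the general fact that a labeled-graph morphism onto $H$ whose fiber partition is an equivalence $\theta$ induces an isomorphism $G/\theta\cong H$, to give $G_n(S)/\theta_n\cong G_{n-1}(S)$.
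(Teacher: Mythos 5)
Your proof is correct and follows essentially the same route as the paper: the same suffix map $\pi$ from $G_n(S)$ onto $G_{n-1}(S)$, with connectedness of each $E(x)$ making the whole fiber over $x$ a single $\theta_n$-class, hence an induced isomorphism on the quotient. The only differences are presentational: you identify fibers with classes directly where the paper instead counts ($\ell(x)$ elements in each class versus in each fiber, so the surjection $\psi$ is a bijection), and you spell out the edge-lifting half of the morphism condition that the paper treats as clear.
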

\begin{proof}
The map $\varphi:S\cap A^n\rightarrow S\cap A^{n-1}$ mapping a word of
$S$ of length $n$
to its suffix of length $n-1$ is clearly a morphism from $G_n(S)$
onto $G_{n-1}(S)$. If $u,v\in S\cap A^n$ are equivalent modulo $\theta_n$,
then $\varphi(u)=\varphi(v)$. Thus there is a morphism $\psi$ from
$G_n(S)/\theta_n$ onto $G_{n-1}(S)$. It is defined for any word $u\in S\cap
A^n$ by
$\psi(\bar{u})=\varphi(u)$
where $\bar{u}$ denotes the class of $u$ modulo $\theta_n$.
But since $S$ is connected,
the class modulo $\theta_n$
of a word $ax$ of length $n$ has $\ell(x)$ elements, which is the same
as
the number of elements of $\varphi^{-1}(x)$.
This shows that $\psi$ is a surjective map
from a finite set onto a set of the same cardinality
and thus that it is one-to-one. Thus $\psi$ is an isomorphism.
\end{proof}
Let $G$ be a strongly connected labeled graph.
Recall from Section~\ref{sectionAutomata}
that a Stallings folding at vertex $v$ relative to letter $a$ of $G$
consists in 
identifying the edges coming into $v$ labeled $a$ and identifying
their origins. A Stallings folding does not modify the group described
by the graph with respect to some vertex. Indeed, if $p\edge{a}v$, $p\edge{b}r$ and $q\edge{a}v$
are three edges of $G$, then adding the edge $q\edge{b}r$ does
not change the group described since the path
$q\edge{a}v\edge{a^{-1}}p\edge{b}r$
has the same label. Thus merging $p$ and $q$ does not add new labels
of
generalized paths.\\

\begin{proofof}{of Theorem~\ref{proposition3}}
The quotient $G_n(S)/\theta_n$ can be obtained by a sequence of Stallings
foldings from the graph $G_n(S)$. Indeed, a Stallings folding at vertex $v$ identifies
vertices
which are equivalent modulo $\theta_n$.
Conversely, consider $u=ax$ and $v=bx$, with $u,v\in S\cap A^n$
and $a,b\in A$ such that
$a$ and $b$ (considered as elements of $L(x)$),
are connected by a path in $E(x)$. Let $a_0,\ldots a_k$ and
$b_1,\cdots b_{k}$ with $a=a_0$ and $b=a_k$ be such that
$(a_i,b_{i+1})$ for $0\le i\le k-1$
 and $(a_i,b_i)$ for $1\le i\le k$ are in $E(x)$. The successive
Stallings foldings at $xb_1,xb_2,\ldots,xb_k$ identify the
vertices $u=a_0x,a_1x,\ldots,a_kx=v$.
Indeed, since $a_ixb_{i+1}, a_{i+1}xb_{i+1} \in S$,
there are two edges labeled  $b_{i+1}$
going out of $a_ix$ and $a_{i+1}x$ which end at
$xb_{i+1}$. The Stallings folding identifies
 $a_ix$ and $a_{i+1}x$. The conclusion follows by induction.

Since the Stallings foldings do not modify the group described, we
deduce
from Proposition~\ref{propRauzyGraphs} that the group described
by the Rauzy graph $G_n(S)$ is the same as the group described by
the Rauzy graph $G_0(S)$. Since $G_0(S)$ is the graph with
one vertex and with loops labeled by each of the letters,
it describes the free group on $A$.
\end{proofof}
\begin{example}
Let $S$ be the tree set obtained by decoding the Fibonacci set
into blocks of length $2$ (see Example~\ref{exampleF2}). Set $u=aa$, $v=ab$, $w=ba$. The graph
$G_2(S)$ is represented on the left of Figure~\ref{figFiboBlocks}.
\begin{figure}[hbt]
\centering
\gasset{Nadjust=wh}
\begin{picture}(100,20)
\node(wv)(0,10){$wv$}\node(vv)(15,10){$vv$}\node(vu)(30,10){$vu$}\node(uw)(45,10){$uw$}
\node(ww)(20,0){$ww$}
\drawedge(wv,vv){$v$}\drawedge(vv,vu){$u$}\drawedge(vu,uw){$w$}
\drawedge[curvedepth=-7,ELside=r](uw,wv){$v$}\drawedge[curvedepth=5](uw,ww){$w$}
\drawedge[curvedepth=5](ww,wv){$v$}
\drawedge[curvedepth=-5,ELside=r](wv,vu){$u$}

\node(v)(60,10){$v$}\node(u)(75,10){$u$}\node(w)(90,10){$w$}
\drawloop[loopangle=-90](v){$v$}\drawedge(v,u){$u$}\drawedge(u,w){$w$}
\drawloop[loopangle=-90](w){$w$}\drawedge[curvedepth=-7,ELside=r](w,v){$v$}
\end{picture}
\caption{The Rauzy graphs $G_2(S)$ and $G_1(S)$ for the decoding of the Fibonacci set into blocks of length $2$.}\label{figFiboBlocks}
\end{figure}
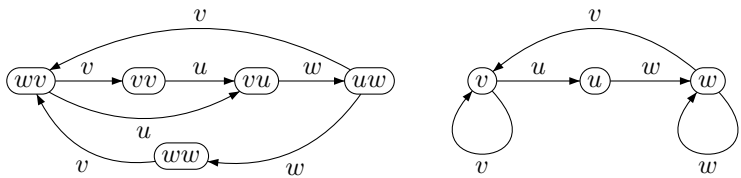
The classes of $\theta_2$ are $\{wv,vv\}$ $\{vu\}$ and $\{ww,uw\}$.
The graph $G_1(S)$
is represented on the right.
\end{example}
The following example shows that Proposition~\ref{propRauzyGraphs} is false for sets
which are not connected.
\begin{example}
Consider again the Chacon set (see Example~\ref{exampleChacon}).

The Rauzy graph $G_1(S)$ corresponding to the Chacon set is
represented in Figure~\ref{figChacon2} on the left.
The graph $G_1(S)/\theta_1$ is represented on the right. It is
not isomorphic to $G_0(S)$ since it has two vertices instead of one.
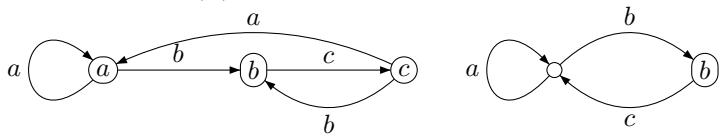
\begin{figure}[hbt]
\centering
\gasset{Nadjust=wh}
\begin{picture}(80,10)
\node(a)(0,5){$a$}\node(b)(20,5){$b$}\node(c)(40,5){$c$}

\drawloop[loopangle=180](a){$a$}\drawedge(a,b){$b$}
\drawedge(b,c){$c$}\drawedge[curvedepth=-5,ELside=r](c,a){$a$}\drawedge[curvedepth=5](c,b){$b$}

\node(ac)(60,5){}\node(b')(80,5){$b$}
\drawloop[loopangle=180](ac){$a$}\drawedge[curvedepth=5](ac,b'){$b$}
\drawedge[curvedepth=5](b',ac){$c$}
\end{picture}
\caption{The graphs $G_1(S)$ and $G_1(S)/\theta_1$.}\label{figChacon2}
\end{figure}
\end{example}

\subsection{The Return Theorem}
We will prove the following result (referred to as the Return Theorem).
\begin{theorem}\label{corollaryJulien}
Let $S$ be a uniformly recurrent tree set containing the alphabet $A$. Then for any $w\in S$, the
set
$\RR_S(w)$ is a basis of the free group on $A$.
\end{theorem}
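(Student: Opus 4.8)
The plan is to combine the two main ingredients that the paper has been carefully assembling: the cardinality result (Theorem~\ref{theoremCardReturn}) and the generation result for connected sets (the as-yet-unstated Theorem~\ref{theoremJulien}, which asserts that in a uniformly recurrent connected set containing $A$, the set $\RR_S(w)$ \emph{generates} the free group on $A$). Since a tree set is in particular connected (a tree is connected), and since it is neutral, both hypotheses are available. The strategy is thus to show that a generating set of the free group $F_A$ of cardinality exactly $\Card(A)$ must already be a basis.

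First I would invoke Theorem~\ref{theoremJulien} to conclude that $\RR_S(w)$ generates $F_A$; this requires only that $S$ be uniformly recurrent and connected, both of which follow from $S$ being a uniformly recurrent tree set. Second, I would invoke Theorem~\ref{theoremCardReturn}: since a tree set is neutral, the set $\RR_S(w)$ has exactly $\Card(A)$ elements. At this point we have a generating set of $F_A$ whose cardinality equals the rank of $F_A$.

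The final step is the purely group-theoretic observation that closes the argument: a free group of rank $n$ cannot be generated by fewer than $n$ elements, and any generating set of cardinality equal to the rank is necessarily a basis. This is a standard Hopfian/rank property of free groups (if a set of $n$ generators were not a basis, the surjection $F_n \to F_A$ it induces would have nontrivial kernel, contradicting the Hopfian property of finitely generated free groups, which are residually finite). I would state this as the Nielsen--Schreier-type fact that in a free group of rank $n = \Card(A)$, any generating set of size $n$ is a basis, and apply it to $\RR_S(w)$. Formally: the map from the free group on an alphabet of size $\Card(A)$ sending generators to the elements of $\RR_S(w)$ is a surjective endomorphism of $F_A$, hence an automorphism by Hopficity, so $\RR_S(w)$ is a basis.

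I expect the only genuine content to be hidden in Theorem~\ref{theoremJulien}, which this short corollary takes as a black box; the Return Theorem itself is essentially the conjunction ``generates $+$ right cardinality $\Rightarrow$ basis,'' and the main obstacle is not in this proof but in having the two prerequisite theorems in hand. One point to be careful about is that $\RR_S(w)$ is a set of \emph{words}, i.e. positive elements of $F_A$, so its cardinality as a set of words coincides with its cardinality as a subset of $F_A$ (distinct nonempty reduced words are distinct group elements); this is immediate but worth noting so that the counting from Theorem~\ref{theoremCardReturn} transfers correctly to the group.
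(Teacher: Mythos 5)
Your proposal is correct and matches the paper's own proof: the paper likewise deduces connectedness (hence Theorem~\ref{theoremJulien}, so $\RR_S(w)$ generates $F_A$) and neutrality (hence Theorem~\ref{theoremCardReturn}, so $\Card(\RR_S(w))=\Card(A)$) from the tree property, and then concludes with the same standard fact that any generating set of $F_A$ with $\Card(A)$ elements is a basis. Your explicit justification via the Hopfian property simply spells out what the paper leaves as a one-line remark.
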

We first show an example of a neutral set which is not a tree set
and for which Theorem~\ref{corollaryJulien} does not hold.
\begin{example}
Consider the set $S$ of Example~\ref{exampleJulien}. Then
$\RR_S(1)=\{2231,31,231\}$. This set has $3$ elements, in agreement
with Theorem~\ref{theoremCardReturn} but it is not a basis of the free
group on $\{1,2,3\}$ since it generates the same group as
$\{2,31\}$.
\end{example}

The proof of Theorem~\ref{corollaryJulien} uses Theorem~\ref{theoremCardReturn}
and the following result.

\begin{theorem}\label{theoremJulien}
Let $S$ be a uniformly 
recurrent connected set containing the alphabet $A$. For any $w\in S$, the set
$\RR_S(w)$ generates the free group on $A$.
\end{theorem}
\begin{proof}
Since $S$ is uniformly recurrent, the set $\RR_S(w)$ is finite. Let $n$
be the maximal length of the words in $w\RR_S(w)$. In this way, any word
in $S\cap A^n$ beginning with $w$ has a prefix in $wR_S(w)$.
Moreover, recall from Property (ii) of Rauzy graphs,
 that the label of any path of
length $n+1$ in the Rauzy graph $G_n(S)$ is in $S$. 

Let $x\in S$ be a word of
length $n$ ending with $w$. Let $\A$ be the simple automaton defined by $G_n(S)$ with initial and
terminal state $x$. Let $X$ be the prefix code generating the
submonoid
recognized by $\A$. Since the automaton $\A$ is simple, by
Proposition~\ref{propGeneratedGroup}, the
set $X$ generates the group described by $\A$. 

We show that $X\subset \RR_S(w)^*$. 
Indeed, let
$y\in X$. Since $y$ is the label
of a path starting at $x$ and ending in $x$, the word $xy$ ends with
$x$
and thus the word $wy$ ends with $w$. Let $\Gamma=\{z\in A^+\mid wz\in
A^*w\}$
and let $R=\Gamma\setminus \Gamma A^+$. Then $R$ is a prefix code
and $\Gamma\cup 1=R^*$, as one may verify easily.
Since $y\in\Gamma$,
 we can write
$y=u_1u_2\cdots u_m$ where each word $u_i$ is in $R$. 
Since $S$ is recurrent and since
$x\in S$, there is $v\in S\cap A^n$ such that $vx\in S$
and thus there is a path labeled $x$ ending at the vertex $x$
by property (i) of Rauzy graphs.
Thus there is a path labeled $xy$ in $G_n(S)$. This implies that
for $1\le i\le m$, there is a path in $G_n(S)$ labeled $wu_i$.

Assume that some $u_i$ is such that
$|wu_i|>n$. 
Then the prefix $p$ of length $n$  of $wu_i$ is the
label of
a path in $G_n(S)$. This implies, 
by Property (ii) of Rauzy graphs,
 that $p$ is in $S$ and thus that $p$
 has a prefix in $wR_S(w)$. But then $wu_i$ has
a proper prefix in $wR_S(w)$, a contradiction. Thus we
have
$|wu_i|\le n$ for all $i=1,2,\ldots,m$. But then the $wu_i$ are in $S$
by property (i) again and
thus the $u_i$ are in $\RR_S(w)$. This shows that $y\in \RR_S(w)^*$.

Thus the group generated by $\RR_S(w)$ contains the group generated by $X$.
But, by Theorem~\ref{proposition3}, the group described by $\A$
is the free group on $A$. Thus $\RR_S(w)$ generates the free group on $A$.
\end{proof}
We illustrate the proof in the following example.
\begin{example}
Let $S$ be the Fibonacci set. We have $\RR_S(aa)=\{baa,babaa\}$. The
Rauzy
graph $G_7(S)$ is represented in Figure~\ref{figureRauzyGraphG_7}.
The set recognized by the automaton obtained using $x=aababaa$ as
initial and terminal state is $X^*$ with
$X=\{babaa,baababaa\}$. In agreement
with the proof of Theorem~\ref{theoremJulien}, we have $X\subset \RR_S(aa)^*$.
\begin{figure}[hbt]
\centering\gasset{Nadjust=wh}
\begin{picture}(80,30)
\node(4)(0,20){$abaabab$}\node(7)(20,20){$baababa$}
\node[Nmarks=if,iangle=-90,fangle=-90](2)(40,20){$aababaa$}\node(5)(60,20){$ababaab$}
\node(8)(80,20){$babaaba$}\node(3)(70,10){$abaabaa$}
\node(6)(40,0){$baabaab$}\node(1)(10,10){$aabaaba$}

\drawedge[curvedepth=3](1,4){$b$}
\drawedge(4,7){$a$}\drawedge(7,2){$a$}
\drawedge(2,5){$b$}\drawedge(5,8){$a$}
\drawedge[curvedepth=3](8,3){$a$}\drawedge[curvedepth=-10](8,4){$b$}
\drawedge[curvedepth=3](3,6){$b$}\drawedge[curvedepth=3](6,1){$a$}

\end{picture}
\caption{The Rauzy graph $G_7(S)$}\label{figureRauzyGraphG_7}
\end{figure}
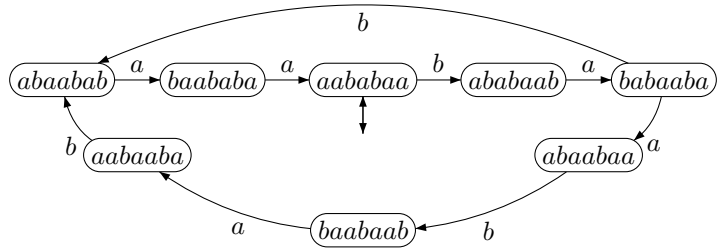
\end{example}

\begin{proofof}{of Theorem~\ref{corollaryJulien}}
 When
$S$ is a tree set, we have 
$\Card(\RR_S(w))=
\Card(A)$ by Theorem~\ref{theoremCardReturn}, which implies the conclusion
since any  set with $\Card(A)$ elements generating $F_A$
is a basis of $F_A$.
\end{proofof}

\section{Bifix codes in acyclic sets}\label{sectionMainResult}
We prove in this section our main results.  Bifix codes in acyclic sets
are bases of the subgroup that they generate
(Theorem~\ref{basisTheorem},
referred to as the Freeness Theorem). Moreover, the
submonoid generated by a finite bifix code $X$ included in
an acyclic set $S$ is such that $X^*\cap S=\langle X\rangle\cap S$
(Theorem~\ref{saturationTheorem}, referred to as the Saturation Theorem).
As a preliminary to the proof, we first define the incidence graph of a finite bifix code (already used
in~\cite{BerstelDeFelicePerrinReutenauerRindone2012}). We prove
a result concerning this graph, implying in particular that
it is acyclic (Proposition~\ref{newLemma633}). We then define
the coset automaton whose states are connected components of
the incidence graph. We prove that this automaton is the Stallings
automaton of the subgroup $\langle X\rangle$
(Proposition~\ref{lemmaBidet}).
Finally, we prove the Freeness and the Saturation Theorems.
\subsection{Freeness and Saturation Theorems}
Let $X$ be a subset of the free group.
We say that $X$ is \emph{free} if
it is a basis of the subgroup $\langle X\rangle$ generated by $X$.
This means that if $x_1,x_2,\ldots,x_n\in X\cup X^{-1}$ 
are such
that $x_1x_2\cdots x_n$ is equivalent to $1$, then $x_ix_{i+1}$
is equivalent to $1$ for some $1\le i<n$.

We will prove the following result (Freeness Theorem).
\begin{theorem}\label{basisTheorem}
A set  $S$ is acyclic if and only if
 any  bifix code  $X\subset S$
is a free subset of the free group $F_A$.
\end{theorem}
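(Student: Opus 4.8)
The plan is to prove the two implications separately, treating the forward implication (acyclic $\Rightarrow$ freeness) as the main content and deriving the converse by an explicit construction. For the forward direction I would first reduce to the case of a \emph{finite} bifix code: a set $X$ is free if and only if each of its finite subsets is free, and every finite subset of a bifix code is again a bifix code contained in $S$. So it suffices to show that every finite bifix code $X\subset S$ is a basis of $\langle X\rangle$.

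For such an $X$, the plan is to analyze the subgroup $\langle X\rangle$ through the incidence graph of $X$, the bipartite graph whose vertices are the nonempty proper prefixes and nonempty proper suffixes of the words of $X$ and whose edges record the factorizations $x=ps$ with $x\in X$. The first step is to show this graph is acyclic (Proposition~\ref{newLemma633}). Here I would use Proposition~\ref{PropStrongTreeCondition}: since $X$ is simultaneously a finite prefix code and a finite suffix code, the relevant generalized extension graphs $E_{U,V}(w)$ are acyclic in the acyclic set $S$, and a cycle in the incidence graph would translate into a cycle in one such generalized extension graph. The second step is to pass to the coset automaton, whose states are the connected components of the incidence graph, and to identify it with the Stallings automaton of $\langle X\rangle$ (Proposition~\ref{lemmaBidet}). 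The final step is a counting argument based on the Euler characteristic: the acyclicity of the incidence graph forces the number of independent cycles of the Stallings automaton to equal $\Card(X)$, so $\rank\langle X\rangle=\Card(X)$, which is exactly the statement that $X$ is a basis of $\langle X\rangle$.

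For the converse I would argue by contraposition: assuming $S$ is not acyclic, I would exhibit a bifix code $X\subset S$ that is not free. If $E(w)$ fails to be acyclic for some $w\in S$, it contains a simple cycle $a_0,b_0,a_1,b_1,\ldots,a_{k-1},b_{k-1},a_0$ with the $a_i\in L(w)$ and $b_i\in R(w)$ pairwise distinct in each part. Taking $X$ to be the set of words $awb$ indexed by the edges of this cycle gives a finite subset of $S$; since all these words share the common length $|w|+2$, the set $X$ is automatically a bifix code. The alternating product $(a_0wb_0)(a_1wb_0)^{-1}(a_1wb_1)(a_2wb_1)^{-1}\cdots(a_{k-1}wb_{k-1})(a_0wb_{k-1})^{-1}$ telescopes, since each cancellation yields $(a_iwb_i)(a_{i+1}wb_i)^{-1}=a_ia_{i+1}^{-1}$, down to the identity of $F_A$, while no two consecutive factors of this word over $X\cup X^{-1}$ cancel (by distinctness of the $a_i$ and of the $b_i$). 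Thus this word witnesses a nontrivial relation and $X$ is not free.

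The step I expect to be the main obstacle is the forward direction, and specifically the identification of the coset automaton with the Stallings automaton of $\langle X\rangle$ (Proposition~\ref{lemmaBidet}) together with the ensuing rank count. One must check that the foldings collapsing the minimal automaton of $X^*$ to its Stallings automaton are governed precisely by the connected components of the incidence graph, so that acyclicity of that graph is equivalent to the foldings never identifying two already-connected states, equivalently to the rank never dropping below $\Card(X)$. Establishing the acyclicity of the incidence graph via Proposition~\ref{PropStrongTreeCondition} is the technical underpinning that makes this count work. By contrast, the converse and the reduction to finite codes are routine.
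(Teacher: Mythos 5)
Your converse direction is exactly the paper's necessity argument (the paper takes $X=AwA\cap S$, you take the edge words of the cycle; the telescoping relation is the same), and your reduction to finite bifix codes is harmless. The forward direction, however, departs from the paper --- which proves freeness \emph{directly}, for arbitrary (possibly infinite) bifix codes, by a cancellation bookkeeping on products $y_1\cdots y_n$ with $y_i\in X\cup X^{-1}$ via ``admissible sequences'' $(u_i,v_i,w_i)$, resting on assertions (iii)--(iv) of Proposition~\ref{newLemma633}; the coset automaton enters the paper only for the Saturation Theorem --- and your pivotal step has a genuine gap. The claim that ``the acyclicity of the incidence graph forces the number of independent cycles of the Stallings automaton to equal $\Card(X)$'' is false as stated: acyclicity of the incidence graph alone governs neither the coset automaton nor the foldings. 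Consider $X=\{xab,xba,yba,yac\}$ over $\{x,y,a,b,c\}$: it is bifix (uniform length) and its incidence graph is a disjoint union of simple paths, hence acyclic; yet $x\equiv y\bmod\theta_X$ (through the shared suffix $ba$) while $xa\not\equiv ya\bmod\theta_X$, so the conclusion of Proposition~\ref{lemmaCompatible} fails and the transition of the class $\{x,y\}$ by $a$ is not even well defined --- the coset construction, and any Euler-characteristic count on it, breaks down before it starts. (This $X$ lies in no acyclic set, of course: assertion (iii) of Proposition~\ref{newLemma633} fails on the reduced path $(ab,x,ba,y,ac)$, since $\lcp(ab,ac)=a$ is not a prefix of $ba$.) What makes the machinery run under the paper's hypothesis is precisely the longest-common-prefix/suffix property (iii)--(iv), not assertion (i). Relatedly, your folding criterion is misstated: \emph{every} Stallings folding identifies two already-connected states (they are joined through the common target); the rank drops exactly when a folding identifies two \emph{parallel} edges.

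Even granting Proposition~\ref{lemmaBidet} (legitimately citable, and proved in the paper from Proposition~\ref{lemmaCompatible}, i.e.\ from (iii)--(iv)), the count itself is the heart of your route and is missing. The literal automaton of $X^*$ has $\Card(P)-1+\Card(X)$ edges on $\Card(P)$ states, hence first Betti number $\Card(X)$; passing to $\B_X$ merges $k_V$ states and $k_E$ transitions, and $\rank\langle X\rangle=\Card(X)-(k_E-k_V)$, so you must prove $k_E=k_V$. Unwinding this equality, it amounts to showing that within each connected component $C$ of the incidence graph, all suffix vertices beginning with a fixed letter $a$ are joined inside $C$ by paths all of whose suffix vertices begin with $a$ --- which is again exactly assertion (iii) --- followed by a double count over components and letters (the example above shows that no argument from acyclicity of the incidence graph alone can succeed). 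With that counting lemma supplied, your strategy does yield a correct alternative proof, arguably more structural than the paper's hands-on cancellation argument, at the price of finiteness of $X$ (which your first reduction restores); as it stands, the key step is asserted with an insufficient justification.
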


Let $M$ be a submonoid of $A^*$ and let $H$ be the subgroup
of $F_A$ generated by $M$. Given a set of words $S$, the submonoid $M$
is said to be \emph{saturated} in $S$ if
$M\cap S=H\cap S$. If $M$ is  generated by $X$,
 then $M$ is saturated in $S$ if and only if $X^*\cap S=\langle X\rangle\cap S$.

Thus, for example, the submonoid recognized by a reversible 
automaton is saturated in $A^*$ (Proposition~\ref{propGeneratedGroup}).

We will prove the
following result (Saturation Theorem).
\begin{theorem}\label{saturationTheorem}
Let $S$ be an acyclic set. The submonoid generated by a bifix code
included in $S$ is saturated in $S$.
\end{theorem}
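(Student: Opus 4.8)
The plan is to prove the two inclusions of $X^*\cap S=\langle X\rangle\cap S$, where $X\subset S$ is a bifix code, $M=X^*$ and $H=\langle X\rangle$. The inclusion $X^*\cap S\subset\langle X\rangle\cap S$ is immediate from $X^*\subset\langle X\rangle$, so the entire content lies in the reverse inclusion $\langle X\rangle\cap S\subset X^*$: every word of $S$ belonging to the subgroup generated by $X$ already belongs to the submonoid generated by $X$. Throughout I would work with the same coset-automaton machinery that yields the Freeness Theorem (Theorem~\ref{basisTheorem}), so that $X$ may be taken to be a basis of $H$.

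The main tool is the coset automaton $\A_X$, whose states are the connected components of the incidence graph of $X$. First I would establish that this incidence graph is acyclic (the content announced as Proposition~\ref{newLemma633}, and the place where the acyclicity of $S$ is consumed) and that $\A_X$ is the Stallings automaton of $H=\langle X\rangle$ (Proposition~\ref{lemmaBidet}). Granting this, $\A_X$ is reversible and describes $H$, so by Proposition~\ref{propGeneratedGroup} the submonoid it recognizes is exactly $H\cap A^*=\langle X\rangle\cap A^*$. In particular, any $w\in\langle X\rangle\cap S$ lies in $\langle X\rangle\cap A^*$ and is therefore recognized by $\A_X$: there is a path labeled $w$ from the initial coset back to itself.

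To finish I would compare $\A_X$ with the minimal automaton $\A_0$ of $X^*$, whose recognized set is exactly $X^*$ and from which $\A_X$ is obtained by Stallings foldings, i.e. by merging proper prefixes of $X$ lying in the same component of the incidence graph. Reading $w$ in $\A_0$ from the initial state $1$ ends at a state $q=1\cdot w$, with $q=1$ precisely when $w\in X^*$; applying the folding projection, the accepting path of $w$ in $\A_X$ shows that $q$ lies in the same incidence-graph component as the empty prefix $1$. The crux, and the step I expect to be the main obstacle, is to prove that a nonempty proper prefix reached by reading a word of $S$ cannot lie in the component of $1$: such a coincidence would close up a cycle in the incidence graph, and hence, through the correspondence between the edges of the incidence graph and the extension graphs $E(\cdot)$, a cycle in some $E(u)$ with $u\in S$, contradicting the acyclicity of $S$. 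This forces $q=1$, hence $w\in X^*$, which completes the reverse inclusion and the proof.
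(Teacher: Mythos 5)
Your setup matches the paper up through Proposition~\ref{lemmaBidet}: the incidence graph is acyclic, the coset automaton $\B_X$ is the reversible Stallings automaton describing $H=\langle X\rangle$, and consequently $H\cap A^*=Z^*$, where $Z$ is the bifix code generating the submonoid recognized by $\B_X$. The gap is in your finishing step. You write that ``reading $w$ in $\A_0$ from the initial state $1$ ends at a state $q=1\cdot w$'' --- but this run need not exist. Stallings foldings do not merely merge states; they create new transitions: a $\theta_X$-class can read a letter $a$ in $\B_X$ because \emph{some} member $p'$ of the class has $p'a\in P\cup X$, while the member $p$ actually reached by the run in the unfolded automaton has no $a$-transition at all. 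Proposition~\ref{lemmaCompatible} only gives compatibility when both $pa$ and $qa$ lie in $P\cup X$; it does not transfer readability across a class. Concretely, in Example~\ref{exampleBasisJulien} the word $bb$ belongs to $\langle X\rangle$ and is accepted by the Stallings automaton, yet its run in the minimal automaton of $X^*$ dies after the first letter; only $bb\notin S$ saves the statement. Proving that every $w\in\langle X\rangle\cap S$ is readable in $\A_0$ is essentially equivalent to the theorem itself, so your plan assumes what it must prove.

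Moreover, the step you flag as the crux is in fact trivial and consumes no acyclicity: the $\theta_X$-class of the empty prefix is the singleton $\{1\}$ simply because $X$ is a suffix code (if $s,ps\in X$ with $p\ne 1$, then $X$ is not suffix) --- the paper notes this inside the proof of Proposition~\ref{lemmaBidet}. So if the run of $w$ did exist and end at $q$, determinism of $\B_X$ would immediately force $q=1$, with no cycle in any $E(u)$ needed; the difficulty is elsewhere. The paper sidesteps the lifting problem entirely: it sets $Y=Z\cap S$, observes that $Y$ is a bifix code contained in $S$ and hence free by Theorem~\ref{basisTheorem}, derives $\langle X\rangle\cap S\subset Z^*\cap S=Y^*\cap S$ (each $Z$-factor of a word of $S$ lies in $S$ since $S$ is factorial, hence in $Y$), and then compares the two expressions of $x\in\langle X\rangle\cap S$ --- a reduced product over $X\cup X^{-1}$ and a positive product over $Y$ --- using uniqueness of reduced words over the basis $Y\supset X$ to force every factor into $X$. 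To salvage your automaton-lifting route you would need a genuinely new argument that a word of $S$ never exits $P\cup X$ through a folded transition, and the incidence-graph cycle argument you sketch does not supply it.
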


We note the following corollary, which shows that
bifix codes in acyclic sets satisfy a property
which is stronger than being bifix (or more precisely that
the submonoid $X^*$ satisfies a property stronger
than being right and left unitary).
\begin{corollary}\label{corollaryChristophe}
Let $S$ be an acyclic set, let $X\subset S$ be a bifix code
and let $H=\langle X\rangle$. For any $u,v\in S$,
\begin{enumerate}
\item[\rm (i)] if
$u,uv\in H\cap S$, then $v\in X^*$,
\item[\rm (ii)] if $v,uv\in H\cap S$, then $u\in X^*$.
\end{enumerate}
\end{corollary}
\begin{proof}
Assume that $u,uv\in H\cap S$. Since $v\equiv u^{-1}(uv)$, we have $v\in H$.
But $v\in H\cap S$ implies $v\in X^*$ by
Theorem~\ref{saturationTheorem}. This proves (i).
The proof of (ii) is symmetric.
\end{proof}
We can express Corollary~\ref{corollaryChristophe} in a different way.
Let $S$ be an acyclic set and let $X\subset S$ be a bifix code.
Then no nonempty word of $\langle X\rangle$ can be a proper
prefix (or suffix) of a word of $X$. Indeed,
assume that $u\in \langle X\rangle$ is a prefix of a word
of $X$.  Then $u$ is
in  $\langle X\rangle\cap S$ and thus in $X^*$ since $X^*$ is
saturated
in $S$.
This implies $u=1$ or $u\in X$.

We illustrate Theorem~\ref{basisTheorem} in the following example.
\begin{example}\label{exampleBasisJulien}
Let $S$ be as in Example~\ref{exampleJulienAcyclic} (recall that $S$
is not a tree set) and let
$X=S\cap A^2$. We have
\begin{displaymath}
X=\{ab,ac,bc,ca,cd,da\}.
\end{displaymath}
The set $X$ is an $S$-maximal bifix code. It is a basis
of a subgroup of infinite index. Indeed, the minimal
automaton of $X^*$ is represented in Figure~\ref{figureGroupJulien} on the left.
The Stallings automaton of the subgroup $H$
generated by $X$ is obtained by merging $3$ with $4$ and $2$ with
$5$. It is represented in Figure~\ref{figureGroupJulien} on the right.
Since it is not a group automaton,
the subgroup has infinite index (see Proposition~\ref{propStallings}).
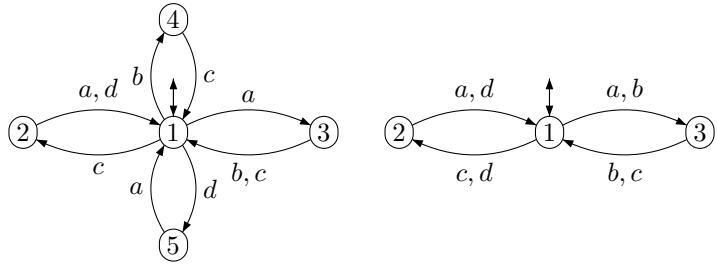
\begin{figure}[hbt]
\gasset{Nadjust=wh}
\centering
\begin{picture}(80,30)
\put(0,10){
\begin{picture}(40,10)(0,-5)
\node(2)(0,0){$2$}
\node[Nmarks=if,iangle=90, fangle=90](1)(20,0){$1$}\node(3)(40,0){$3$}
\node(4)(20,15){$4$}\node(5)(20,-15){$5$}

\drawedge[curvedepth=3](1,2){$c$}\drawedge[curvedepth=3](2,1){$a,d$}
\drawedge[curvedepth=3](1,3){$a$}\drawedge[curvedepth=3](3,1){$b,c$}
\drawedge[curvedepth=3](1,4){$b$}\drawedge[curvedepth=3](4,1){$c$}
\drawedge[curvedepth=3](1,5){$d$}\drawedge[curvedepth=3](5,1){$a$}
\end{picture}
}
\put(50,10){
\begin{picture}(40,10)(0,-5)
\node(2)(0,0){$2$}\node[Nmarks=if,iangle=90, fangle=90](1)(20,0){$1$}\node(3)(40,0){$3$}

\drawedge[curvedepth=3](1,2){$c,d$}\drawedge[curvedepth=3](2,1){$a,d$}
\drawedge[curvedepth=3](1,3){$a,b$}\drawedge[curvedepth=3](3,1){$b,c$}
\end{picture}
}
\end{picture}
\caption{The minimal automaton of $X^*$ and the
Stallings automaton of $\langle X\rangle$.}
\label{figureGroupJulien}
\end{figure}
The set $X$ is a basis of $H$ by Theorem~\ref{basisTheorem}.
This can also be seen by performing Nielsen transformations
on the set $X$ (see~\cite{LyndonSchupp2001} for example). Indeed, replacing $bc$ and $da$ by $bc(ac)^{-1}$ and
 $da(ca)^{-1}$, we obtain
  $X'=\{ab,ac,ba^{-1},ca,cd,dc^{-1}\}$
which is Nielsen reduced. Thus $X'$ is a basis of $H$ and thus also
$X$.

Note that, in agreement with Theorem~\ref{saturationTheorem}, the two
words of length $2$ which are in $H$ but not in $X^*$,
namely $bb$ and $dd$, are not in $S$.

\end{example}

Theorem~\ref{basisTheorem} is false if $X$ is prefix but not bifix, as
shown in the following example.
\begin{example}
Let $S$ be the Fibonacci set and let $X\subset S$ be the
prefix code $X=\{aa,ab,b\}$. Then $a=(ab)b^{-1}$ is in $\langle
X\rangle$ and thus $X$ generates the free group on $A$.
Thus $X$ is not a basis and $X^*\cap S$ is strictly included
in $\langle X\rangle\cap S$ (for example  $a\notin X^*$).
\end{example}

\subsection{Incidence graph}
 Let $X$ be a set, let $P$ be the set of its proper prefixes
and $S$ be the set of its proper suffixes. Set $P'=P\setminus \{1\}$
and $S'=S\setminus \{1\}$.
Recall
from~\cite{BerstelDeFelicePerrinReutenauerRindone2012}
that the incidence graph of $X$ is the undirected graph $G$ defined as
follows.
The set of vertices is the \emph{disjoint union}
of $P'$ and $S'$. The edges of
$G$ are the pairs $(p,s)$ for $p\in P'$ and $s\in S'$
such that $ps\in X$. As in any undirected graph, a connected component
of $G$ is a maximal set of vertices connected by paths.

The following result is proved
in~\cite{BerstelDeFelicePerrinReutenauerRindone2012} in the case of
a Sturmian set (Lemma 6.3.3). We give here a proof in the more
general case of an acyclic set.
We call a path reduced if it does not use equal
consecutive edges.
\begin{proposition}\label{newLemma633}
Let $S$ be an acyclic
set, let $X\subset S$ be a 
bifix code and let $G$ be the incidence graph of $X$. Then the
following
assertions hold.
\begin{enumerate}
\item[\rm(i)]
The graph $G$ is
acyclic.
\item[\rm(ii)]
The intersection of $P'$ (resp. $S'$) with each connected
component
of $G$ is a suffix (resp. prefix) code.
\item[\rm(iii)]
 For every reduced path $(v_1,u_1,\ldots,u_n,v_{n+1})$ in $G$ with
$u_1,\ldots,u_n\in P'$ and $v_1,\ldots,v_{n+1}$ in $S'$,
the longest common prefix of $v_1,v_{n+1}$ is a proper prefix of all
$v_1,\ldots,v_n,v_{n+1}$. 
\item[\rm(iv)]
Symmetrically, for every reduced path $(u_1,v_1,\ldots,v_n,u_{n+1})$ in $G$
with $u_1,\ldots,$ $u_{n+1}\in P'$ and $v_1,\ldots,v_n\in S'$,
the longest common suffix of $u_1,u_{n+1}$ is a proper
suffix of $u_1,u_2,\ldots,u_{n+1}$.
\end{enumerate}
\end{proposition}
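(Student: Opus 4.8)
The plan is to prove all four assertions simultaneously by showing that a strong structural statement about reduced paths (essentially items (iii) and (iv)) holds, and then deriving acyclicity (i) and the code properties (ii) as consequences. The reason to bundle them is that the natural induction is on the length of a reduced path in the incidence graph $G$, and the "longest common prefix/suffix decreases strictly" statements are exactly what keeps the induction from collapsing. So first I would fix notation: a reduced path alternates between $P'$ and $S'$, and for an edge $(p,s)$ we have $ps\in X$. The key observation I would isolate at the outset is a local consistency property of consecutive edges: if $v_i$ and $v_{i+1}$ are two suffix-vertices adjacent to the same prefix-vertex $u_i$ (so $u_iv_i, u_iv_{i+1}\in X$), then since $X$ is a \emph{prefix} code neither of $v_i,v_{i+1}$ can be a prefix of the other; symmetrically, two prefix-vertices adjacent to a common suffix-vertex are suffix-incomparable because $X$ is a \emph{suffix} code. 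This is where the bifix hypothesis enters, and it is the seed of everything.

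Next I would carry out the main induction, say for (iii). The aim is: along a reduced path $v_1,u_1,v_2,\ldots,u_n,v_{n+1}$ the longest common prefix $w=\lcp(v_1,v_{n+1})$ is a \emph{proper} prefix of every intermediate $v_j$. I would track the word $w_j=\lcp(v_1,v_{j})$ as $j$ grows and argue that it can only shrink (never grow past what is forced), using the local property above at each step: passing through $u_i$ from $v_i$ to $v_{i+1}$, the relations $u_iv_i\in X$ and $u_iv_{i+1}\in X$ together with prefix-incomparability force the point of divergence of the $v$'s to move in a controlled way. The heart of the matter, and the step I expect to be the main obstacle, is ruling out that the longest common prefix stabilizes at some $v_j$ itself — i.e. that some intermediate $v_j$ equals $w$ and is therefore a prefix of both endpoints. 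This is precisely the configuration that would create a cycle, so this is where acyclicity is really being proved. To exclude it I would argue by contradiction: if the common prefix coincided with an intermediate vertex, one could splice the two halves of the path to produce a strictly shorter \emph{closed} reduced path, and then translate that closed path in $G$ into a cycle in an extension graph $E(u)$ for a suitable word $u$ (the common prefix/suffix), contradicting the acyclicity of $S$. The translation from "cycle in the incidence graph" to "cycle in some $E(u)$" is the technical crux, and it is exactly the kind of argument that the generalized-extension-graph machinery (Proposition~\ref{PropStrongTreeCondition}) is designed to support: one realizes the incidence edges as extensions of the word $u$ relative to finite suffix/prefix codes drawn from the path, and acyclicity of $E_{U,V}(u)$ forbids the cycle.

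With (iii) and its mirror image (iv) in hand, the remaining items are short. For (i), a cycle in $G$ would in particular be a reduced closed path; applying (iii) to it forces the longest common prefix of $v_1=v_{n+1}$ to be a proper prefix of $v_1$ itself, which is absurd — so no cycle exists. For (ii), suppose two elements $p,p'$ of $P'$ lie in the same connected component with $p$ a suffix of $p'$; I would take a reduced path joining them and apply (iv), concluding that the common suffix (which is $p$ itself) is a \emph{proper} suffix of $p$, again a contradiction, so $P'$ meets each component in a suffix code, and dually for $S'$ using (iii). The whole proof thus rests on one induction whose only delicate point is converting a hypothetical short closed path in the incidence graph into a genuine cycle in an extension graph, where the acyclicity hypothesis on $S$ can be invoked.
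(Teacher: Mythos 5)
Your plan's skeleton does coincide with the paper's proof: assertions (iii) and (iv) are proved jointly by induction on the length of the path, the base case is exactly the prefix/suffix incomparability you isolate from the bifix hypothesis, assertions (i) and (ii) are then derived from (iii)/(iv) by the same short arguments you give, and the crux is indeed an appeal to Proposition~\ref{PropStrongTreeCondition} applied to generalized extension graphs whose vertex sets $U=\{u_1,\ldots,u_n\}$ and $V=\{v_1,\ldots,v_n\}$ are drawn from the path and are suffix/prefix codes \emph{by the induction hypothesis} (this is precisely how the paper obtains it).

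There is, however, a genuine gap at the inductive crux. You propose to rule out that ``the longest common prefix stabilizes at some $v_j$ itself, i.e.\ that some intermediate $v_j$ equals $w$.'' That is not the negation of (iii): the word $p=\lcp(v_1,v_{n+1})$ fails to be a proper prefix of an intermediate $v_j$ as soon as $p$ is not a prefix of $v_j$ at all, i.e.\ when the agreement with $v_1$ dips below $|p|$ along the path and later recovers --- and this is the only hard case. The comparable cases ($v_j=p$, or $v_j$ a prefix of $v_1$) follow at once from the induction hypothesis applied to the shorter subpath from $v_1$ to $v_j$, and your local observation (consecutive $v$'s are prefix-incomparable) cannot control the dip, since two incomparable words may agree with $v_1$ to any depth. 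The device you are missing is the paper's truncation of the code at $p$: after excluding $v_1=p$ and $v_{n+1}=p$ by a cycle through the vertex $p$ in $E_{U,V}(\varepsilon)$, one sets $W=p^{-1}V$ and $V'=(V\setminus pW)\cup p$ (still a prefix code), chooses $i<j$ with $p$ a proper prefix of $v_i,v_j$ but of none of $v_{i+1},\ldots,v_{j-1}$, and exhibits the cycle $(p,u_i,v_{i+1},u_{i+1},\ldots,v_{j-1},u_{j-1},p)$ in $E_{U,V'}(\varepsilon)$ --- the edges at $p$ exist because $u_ip$ and $u_{j-1}p$ are prefixes of $u_iv_i,u_{j-1}v_j\in X\subset S$ --- contradicting Proposition~\ref{PropStrongTreeCondition}. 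Two further corrections: these extension graphs are based at the \emph{empty word}, with $p$ occurring as a vertex of $V'$, not at a word $u=p$ as you suggest; and your proposed mechanism of ``splicing the two halves into a strictly shorter closed reduced path'' does not work --- since $v_1\ne v_{n+1}$ no closed path in $G$ arises from the splice, and the paper never manufactures closed paths in $G$ (it is proving that $G$ has none).
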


\begin{proof}
Assertions (iii) and (iv) imply Assertions (i) and (ii). Indeed, assume that
(iii) holds. Consider
a  reduced path $(v_1,u_1,\ldots,u_n,v_{n+1})$ in $G$ with
$u_1,\ldots,u_n\in P'$ and $v_1,\ldots,v_{n+1}$ in $S'$. If
$v_1=v_{n+1}$,
then the longest common prefix of  $v_1,v_{n+1}$ is not a proper
prefix of them. Thus $G$ is acyclic and (i) holds. Next, if
$v_1$, $v_{n+1}$ are comparable for the prefix order, their longest
common prefix is one of them, a contradiction with (iii) again.
The assertion on $P'$ is proved in an analogous way using  assertion (iv).

We prove  (iii) and (iv)
by induction on $n\ge 1$.

The assertions holds for $n=1$. Indeed, if $u_1v_1,u_1v_2\in X$
and if $v_1\in S'$ is a prefix of
$v_2\in S'$,
then $u_1v_1$ is a prefix of $u_1v_2$, a contradiction with the
hypothesis
that $X$ is a prefix code. The same
holds symmetrically for $u_1v_1,u_2v_1\in X$ since $X$ is a suffix code.

Let $n\ge 2$ and assume that the assertions hold for any path
of length at most $2n-2$. We treat the case of a path
$(v_1,u_1,\ldots,u_n,v_{n+1})$ in $G$ with
$u_1,\ldots,u_n\in P'$ and $v_1,\ldots,v_{n+1}$ in $S'$. The other
case is symmetric.

Let $p$ be the longest common prefix of $v_1$ and $v_{n+1}$.
We may assume that $p$ is nonempty since otherwise the
statement is obviously true.
  Any two elements of the
set $U=\{u_1,\ldots,u_n\}$ are connected by a path of length
at most $2n-2$ (using elements of $\{v_2,\ldots v_n\}$).
Thus, by induction hypothesis, 
$U$ is a suffix code. Similarly, any two elements of the set 
$V=\{v_1,\ldots,v_n\}$ are connected by a path of length
at most $2n-2$ (using elements of $\{u_1,\ldots u_{n-1}\}$).
Thus $V$ is a prefix code.  We cannot have $v_1=p$ since
otherwise, using the fact that $u_np$ is a prefix of $u_nv_{n+1}$
and thus in $S$, the generalized extension
graph $E_{U,V}(\varepsilon)$ would have
the cycle $(p,u_1,v_2,\ldots,u_n,p)$, a contradiction since $E_{U,V}(\varepsilon)$
is acyclic by Proposition~\ref{PropStrongTreeCondition}. Similarly, we
cannot
have $v_{n+1}=p$.

Set $W = p^{-1}V$ and $V' = (V \setminus pW) \cup p$. Since
$V$ is a prefix code and since
$p$ is a proper prefix of $V$, the set $V'$ is
a prefix code.
Suppose that $p$ is not a proper prefix of all $v_2,\dots,v_n$. 
Then there exist $i,j$ with $1\le i<j\le n+1$ such that
$p$ is a proper prefix of $v_i,v_j$ but not of any $v_{i+1},\ldots,v_{j-1}$.
Then $v_{i+1},\dots,v_{j-1} \in V'$ and there is the cycle 
$(p,u_i,v_{i+1},u_{i+1},\dots,v_{j-1},u_{j-1},p)$ in the graph
$E_{U,V'}(\varepsilon)$. 
This is in contradiction with Proposition~\ref{PropStrongTreeCondition} because, 
$V'$ being a prefix code, $E_{U,V'}(\varepsilon)$ is acyclic. 
Thus $p$ is a proper prefix of all $v_2,\dots,v_n$.
\end{proof}

Let $X$ be a bifix code and let $P$ be the set of proper prefixes of
$X$. Consider the equivalence $\theta_X$ on $P$ which is the
transitive closure of the relation formed by the pairs $p,q\in P$ such
that $ps,qs\in X$ for some $s\in A^+$. Such a pair corresponds, when
$p,q\ne1$, to a path $p\rightarrow s\rightarrow q$ in the
incidence graph of $X$.  Thus a class of $\theta_X$ is either reduced
to the empty word or it is the intersection of $P\setminus1$ with a connected
component of the incidence graph of $X$.

The following property relates the equivalence $\theta_X$ with the
right cosets of $H=\langle X\rangle$.
It is Proposition 6.3.5 in~\cite{BerstelDeFelicePerrinReutenauerRindone2012}.
\begin{proposition}\label{propTheta}
  Let $X$ be a bifix code, let $P$ be the set of proper prefixes of
  $X$ and let $H$ be the subgroup generated by $X$. For any $p,q\in
  P$, $p\equiv q\bmod\theta_X$ implies $Hp=Hq$.
\end{proposition}

Let $\A=(P,1,1)$ be the literal automaton of $X^*$.
  We show that the equivalence
$\theta_X$ is compatible with the transitions of the automaton $\A$ in
the following sense.

The following is proved
in~\cite{BerstelDeFelicePerrinReutenauerRindone2012}
 (Lemma 6.3.6 and Lemma 6.4.2) in the case of
a Sturmian set $S$.

\begin{proposition}\label{lemmaCompatible}
  Let $S$ be an acyclic set.
 Let $X\subset S$ be a bifix code and let $P$
  be the set of proper prefixes of $X$.  Let $p,q\in P$ and $a\in
  A$ be such that  $pa,qa\in P\cup X$. Then in the literal automaton
  of $X^*$,
one has $p\equiv q\bmod\theta_X$ if and only if $p\cdot
  a\equiv q\cdot a\bmod\theta_X$.
\end{proposition}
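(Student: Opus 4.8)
The plan is to prove both implications inside a single connected component of the incidence graph of $X$, turning every forbidden configuration into a cycle of a generalized extension graph, which is impossible by Proposition~\ref{PropStrongTreeCondition}. Two preliminary observations organize the argument. First, since $X$ is a suffix code, the class of $1$ modulo $\theta_X$ is reduced to $\{1\}$: if $1s,qs\in X$ with $s\in A^+$ and $q\ne1$, then $s$ would be a proper suffix of $qs\in X$. Second, I record a structural fact: if $C$ is a connected component of the incidence graph, then for $u\in C\cap P'$ and $w\in C\cap S'$ one has $uw\in S$ if and only if $uw\in X$. Indeed the generalized extension graph $E_{C\cap P',C\cap S'}(\varepsilon)$ is acyclic by Proposition~\ref{PropStrongTreeCondition} (the sets $C\cap P'$ and $C\cap S'$ are a suffix and a prefix code by Proposition~\ref{newLemma633}(ii)), while its edges coming from $X$ already connect all its vertices; any further edge $uw\in S\setminus X$ would close a cycle. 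In particular, for $p\in C\cap P'$ with $pa\in P\cup X$, the position of $pa$ is governed by $C$ alone: if $a\in C\cap S'$ then $pa\in X$, and otherwise $pa\in P'$. This rules out the mixed situation $p\cdot a=1$, $q\cdot a\in P'$ for $p\equiv q$, and reduces matters to two genuine cases.

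For the forward implication I assume $p\equiv q\bmod\theta_X$ with $pa,qa\in P\cup X$; I may assume $p\ne q$, so $p,q$ lie in a common component $C$ with $p,q\in C\cap P'$. If $a\in C\cap S'$, the structural fact gives $pa,qa\in X$, hence $p\cdot a=q\cdot a=1$. If $a\notin C\cap S'$, then $pa,qa\in P'$ and I must show $pa\equiv qa$. I argue by induction on the length of the reduced path $p=u_1,v_1,\dots,v_m,u_{m+1}=q$ joining $p$ to $q$ in $C$ (with $u_i\in P'$, $v_j\in S'$, $u_jv_j,u_{j+1}v_j\in X$). The key point is that some $v_j$ must start with the letter $a$: otherwise $a$ is incomparable with every $v_j$, so $\{v_1,\dots,v_m,a\}$ is a prefix code, and the edges $(u_j,v_j),(u_{j+1},v_j)$ together with the two extra edges $(u_1,a),(u_{m+1},a)$ (present because $pa,qa\in S$) form a cycle in the acyclic graph $E_{\{u_1,\dots,u_{m+1}\},\{v_1,\dots,v_m,a\}}(\varepsilon)$, a contradiction. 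Writing $v_j=av_j'$ with $v_j'\in A^+$, the relations $u_jav_j',u_{j+1}av_j'\in X$ show $u_ja\equiv u_{j+1}a$; the two subpaths $p\to u_j$ and $u_{j+1}\to q$ are strictly shorter and join points at which $a$ may be read, so by induction $pa\equiv u_ja\equiv u_{j+1}a\equiv qa$.

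For the backward implication I assume $p\cdot a\equiv q\cdot a$. If both equal $1$, then $pa,qa\in X$ share the suffix $a\in A^+$, whence $p\equiv q$ at once; the case where exactly one equals $1$ cannot occur, since the class of $1$ is $\{1\}$. Otherwise $pa,qa\in P'$ and $pa\equiv qa$, and here Proposition~\ref{newLemma633}(iv) does the work: along a reduced path $pa=t_1,w_1,\dots,w_l,t_{l+1}=qa$ in the incidence graph, the longest common suffix of $pa$ and $qa$ is a proper suffix of every $t_i$. Since $pa$ and $qa$ both end with $a$, this common suffix ends with $a$, so every $t_i$ ends with $a$; writing $t_i=\hat t_ia$, the relations $t_jw_j,t_{j+1}w_j\in X$ become $\hat t_j(aw_j),\hat t_{j+1}(aw_j)\in X$, exhibiting single links $\hat t_j\equiv\hat t_{j+1}$. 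As $\hat t_1=p$ and $\hat t_{l+1}=q$, this yields $p\equiv q$.

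I expect the forward implication to be the main obstacle, precisely because the transitive closure defining $\theta_X$ may route through proper prefixes $u_i$ for which $u_ia$ is not even a factor of $S$, so a naive link-by-link transport of the relation through the literal automaton breaks down. The device that overcomes this is the cycle argument of the second paragraph: instead of transporting each link, it uses acyclicity of the generalized extension graph to force the connecting path to contain a link along which $a$ can be appended, and then recurses. The backward implication is comparatively soft once Proposition~\ref{newLemma633}(iv) is invoked, the common-suffix bound making every intermediate prefix end in $a$ automatically.
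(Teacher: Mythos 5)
Your proof is correct, and while its backward half coincides with the paper's, the forward half takes a genuinely different route. For the converse implication you do exactly what the paper does: take a reduced path in the incidence graph from $pa$ to $qa$, invoke Proposition~\ref{newLemma633}(iv) to see that the longest common suffix, which ends in $a$, is a proper suffix of every vertex along the path, strip the final $a$, and read off the $\theta_X$-links (your explicit elimination of the mixed case $p\cdot a=1$, $q\cdot a\in P'$ via the singleton class of $1$ is left implicit in the paper). For the direct implication the paper argues in one global step: since $pa,qa\in P\cup X$ there are $v,w$ with $pav,qaw\in X$, so the path from $p$ to $q$ extends to a reduced path from $v_0=av$ to $v_{n+1}=aw$, and Proposition~\ref{newLemma633}(iii) (the longest-common-prefix statement) forces \emph{every} intermediate $v_i$ to begin with $a$ simultaneously; this yields the shifted path $(u_0a,v_1',\ldots,v_n',u_na)$ when $pa,qa\in P$, and when $pa\in X$ it forces $w=\varepsilon$, hence $qa\in X$. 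You instead first prove a spanning-tree dichotomy --- within a connected component $C$ of the incidence graph, $uw\in S$ if and only if $uw\in X$, so whether appending $a$ lands in $X$ or in $P'$ depends only on whether $a\in C\cap S'$ --- and then, in the $P'$ case, locate one link $v_j$ beginning with $a$ by a cycle argument in $E_{\{u_1,\ldots,u_{m+1}\},\{v_1,\ldots,v_m,a\}}(\varepsilon)$ and recurse on the two shorter subpaths. Your route is longer, but it isolates a structural fact of independent interest and makes transparent why the mixed case cannot occur in the forward direction; the paper's lcp argument transports the whole path at once and avoids the induction entirely.

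One imprecision worth fixing, though it is not a genuine gap: the proposition does not assume $X$ finite, so the component $C$ may be infinite, whereas Proposition~\ref{PropStrongTreeCondition} is stated for \emph{finite} suffix and prefix codes. Your application of it to $E_{C\cap P',C\cap S'}(\varepsilon)$ in the preliminary structural fact should therefore be localized: a putative edge $uw\in S\setminus X$ closes a cycle with a finite path of $X$-edges joining $u$ to $w$ inside $C$, and restricting $U$ and $V$ to the finitely many words occurring on that cycle --- still a suffix code and a prefix code by Proposition~\ref{newLemma633}(ii) --- gives the contradiction. The cycle argument in your induction already uses only the finite sets $\{u_1,\ldots,u_{m+1}\}$ and $\{v_1,\ldots,v_m,a\}$, so no change is needed there; the paper's proof sidesteps the issue throughout by invoking Proposition~\ref{PropStrongTreeCondition} only on finite sets extracted from paths.
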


\begin{proof}
Assume first that $p\equiv q\bmod\theta_X$. We may assume that
$p,q$ are nonempty.
Let $(u_0,v_1,
u_1,\ldots,v_n,u_n)$ be a reduced path in the incidence graph $G$ of $X$ with $p=u_0$,
$u_n=q$.  The corresponding words in $X$ are $u_0v_1,
u_1v_1,u_1v_2,\dots,u_nv_n$. 
We may assume that the words $u_i$ are pairwise distinct,
and that the $v_i$ are pairwise distinct.  Moreover, since $pa,qa\in
P\cup X$
 there exist words $v,w$ such that $pav,qaw\in
X$. Set $v_0=av$ and $v_{n+1}=aw$. 

By Proposition~\ref{newLemma633}, $a$ is a proper prefix of
$v_0,v_1,\ldots,v_{n+1}$. Set $v_i=av'_i$ for $0\le i\le n+1$.

If $pa,qa\in P$, then
$(u_0a,v'_1,u_1a,\ldots,v'_n,u_na)$ is a path from $pa$ to $qa$ in $G$.
This shows that $pa\equiv qa\bmod\theta_X$.

Next, suppose that $pa\in X$ and thus that $v_0=a$.
By Proposition~\ref{newLemma633},  we have $w=\varepsilon$
since otherwise $v_0=a$ is  a proper prefix of $v_{n+1}$.
Thus $qa\in X$ and $p\cdot a=q\cdot a$.

Conversely, if $p\cdot a\equiv q\cdot a\bmod\theta_X$, assume first that 
$pa,qa\in P$. Then $pa\equiv qa\bmod\theta_X$ and thus there is a reduced path
$(u_0,v_1,\ldots,v_n,u_n)$ in $G$ with $u_0=pa$ and $u_n=qa$.
By Proposition~\ref{newLemma633}, $a$ is a proper suffix
of $u_1,\ldots,u_n$. Set $u_i=u'_ia$.  Thus $(p,av_1,u'_1,\ldots,q)$
is a path in $G$, showing that $p\equiv q\bmod\theta_X$.

Finally, if $pa,qa\in X$, then $(p,a,q)$ is a path in $G$ and
thus $p\equiv q\bmod\theta_X$.
\end{proof}

\subsection{Coset automaton}

Let $S$ be an acyclic set  and
let $X\subset S$ be a bifix code.
We introduce a new automaton denoted $\B_X$ 
 and called the \emph{coset automaton}\index{coset
  automaton}\index{automaton!coset} of $X$. 
Let $R$ be the set of classes of $\theta_X$ with the class of $1$
still denoted $1$. 
The coset automaton of $X$  is the automaton
$\B_X=(R,1,1)$ with set of states $R$ and transitions induced by the
transitions of the literal automaton $\A=(P,1,1)$ of $X^*$. Formally,
for $r,s\in R$ and $a\in A$, one has $r\cdot a=s$ in the automaton
$\B_X$ if there exist $p$ in the class $r$ and $q$ in the class $s$ such
that $p\cdot a=q$ in the automaton $\A$.

Observe first that the definition is consistent since, by
Proposition~\ref{lemmaCompatible}, if $p\cdot a$ and $p'\cdot a$ are
nonempty and $p, p'$ are in the same class $r$, then $p\cdot a$ and
$p'\cdot a$ are in the same class. 

Observe next that if there is a path from $p$ to $p'$ in  the
automaton $\A$ labeled $w$, then there is a path from the class $r$
of $p$ to the class $r'$ of $p'$ labeled $w$ in $\B_X$.

\begin{figure}[hbt]\label{figureBX}
  \centering
  \begin{picture}(40,10)(0,-5)
    \node[Nmarks=if, iangle=90,fangle=90](1)(0,0){$1$}
    \node[linecolor=red](2)(20,0){\textcolor{red}{$2$}}
    \node[linecolor=blue](3)(40,0){\textcolor{blue}{$3$}}
    \drawedge[curvedepth=3](1,2){$b$}\drawedge[curvedepth=3](2,1){$b$}
    \drawedge[curvedepth=3](2,3){$a$}\drawedge[curvedepth=3](3,2){$a$}
    \drawloop[loopangle=180](1){$a$}\drawloop[loopangle=0](3){$b$}
  \end{picture}
  \caption{The automaton $\B_X$.}
\end{figure}

\begin{example} Let $S$ be the Fibonacci set and
let 
\begin{displaymath}
X=\{a,baab,babaabab,babaabaabab\}.
\end{displaymath}
 The set $X$ is an $S$-maximal
bifix code of $S$-degree $3$ 
(see~\cite{BerstelDeFelicePerrinReutenauerRindone2012}, Example 6.3.1).
The automaton $\B_X$ has three
  states. It is a group automaton. State $2$  is the class containing
  $b$, and state $3$ is the class containing $ba$. The bifix code 
  generating the submonoid  recognized by this automaton is
  $Z=a\cup b(ab^*a)^*b$. 
\end{example}

The following result shows that the coset automaton of $X$ is the
Stallings automaton of the subgroup generated by $X$.

\begin{proposition}\label{lemmaBidet} Let $S$ be an acyclic set, and let 
  $X\subset S$ be a bifix code. The coset automaton $\B_X$ is
  reversible and describes the subgroup generated by $X$.
Moreover $X\subset Z$, where $Z$ is the bifix code
  generating the submonoid recognized by $\B_X$.
\end{proposition}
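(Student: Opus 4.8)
The plan is to establish the three assertions in the order: $\B_X$ is simple and reversible, the described subgroup equals $\langle X\rangle$, and finally the sharper inclusion $X\subset Z$.

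First I would note that $\B_X$ is simple. The literal automaton $\A$ of $X^*$ is trim, since every proper prefix $p$ of a word of $X$ is reached from $1$ by reading $p$ and reaches $1$ by reading the complementary suffix; as the quotient map sends paths of $\A$ to paths of $\B_X$, the automaton $\B_X$ is trim with unique terminal state $1$ coinciding with the initial state. For reversibility, suppose $r\cdot a=s$ and $r'\cdot a=s$ in $\B_X$. By definition there are $p\in r$, $p'\in r'$ and $q,q'\in s$ with $p\cdot a=q$ and $p'\cdot a=q'$ in $\A$; in particular $pa,p'a\in P\cup X$ and $q\equiv q'\bmod\theta_X$. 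Proposition~\ref{lemmaCompatible} then yields $p\equiv p'\bmod\theta_X$, that is $r=r'$. Hence each $\varphi_{\B_X}(a)$ is injective and $\B_X$ is reversible, so the submonoid it recognizes is generated by a bifix code $Z$, and by Proposition~\ref{propGeneratedGroup} the subgroup described by $\B_X$ is $\langle Z\rangle$.

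Next I would identify $\langle Z\rangle$ with $H=\langle X\rangle$. For $\langle X\rangle\subset\langle Z\rangle$, reading any $x\in X$ leads from $1$ back to $1$ in $\A$, hence in $\B_X$, so $x\in Z^*$ and, $\langle Z\rangle$ being a group, $\langle X\rangle\subset\langle Z\rangle$. For the reverse inclusion I would use the coset map $\kappa(r)=Hp$ for $p$ in the class $r$, well defined by Proposition~\ref{propTheta}. If $r\cdot a=s$ in $\B_X$ with $a\in A$, choosing representatives with $p\cdot a=q$ in $\A$ gives either $q=pa$ or $pa\in X$ with $q=1$; in both cases $Hq=Hpa$, so $\kappa(s)=\kappa(r)\,a$. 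This identity extends to inverse edges (if $p_i\cdot b=p_{i-1}$ then $\kappa(p_i)=\kappa(p_{i-1})\,b^{-1}$), hence to generalized paths, giving $\kappa(r_n)=\kappa(r_0)\,g$ for a generalized path from $r_0$ to $r_n$ of label $g$. Applied to a generalized path from $1$ to $1$ this forces $H=Hg$, i.e. $g\in H$; thus the subgroup described by $\B_X$ is contained in $H$. Combining the two inclusions gives $\langle Z\rangle=\langle X\rangle$.

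Finally I would prove $X\subset Z$. Since $Z$ is the prefix code generating the submonoid recognized by the simple automaton $\B_X$, a word $w\in Z^*$ lies in $Z$ exactly when the path labeled $w$ from $1$ returns to $1$ only at its end. The key preliminary is that the $\theta_X$-class of $1$ is the singleton $\{1\}$: if $1$ were related to a nonempty $p\in P$, there would be $s\in A^+$ with $s,ps\in X$, making $s$ a proper suffix of $ps$ and contradicting that $X$ is a suffix code. Now for $x=a_1\cdots a_m\in X$, the path reading $x$ in $\A$ passes through the nonempty proper prefixes $a_1\cdots a_i\in P'$ for $1\le i\le m-1$ before returning to $1$; in $\B_X$ it passes through their classes, each distinct from the class $1=\{1\}$ since each contains a nonempty word. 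Thus the path returns to $1$ only at the end, so $x\in Z$. The step I expect to be the main obstacle is the inclusion of the described subgroup into $H$: one must check that $\kappa$ is well defined via Proposition~\ref{propTheta} and compatible with the inverse edges of generalized paths, so that every closed generalized path at $1$ has label in $H$. Reversibility is then essentially immediate from Proposition~\ref{lemmaCompatible}, and $X\subset Z$ reduces to the suffix-code property of $X$ through the singleton class of $1$.
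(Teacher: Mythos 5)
Your proposal is correct and takes essentially the same route as the paper: reversibility is deduced from Proposition~\ref{lemmaCompatible}, the inclusion $X\subset Z$ from the fact that the $\theta_X$-class of $1$ is the singleton $\{1\}$ (which, as you rightly make explicit, rests on $X$ being a suffix code), and the identification of the described subgroup with $\langle X\rangle$ via the coset map of Proposition~\ref{propTheta}. The only variation is cosmetic: where the paper proves $Hpw=Hq$ by induction on positive words $w\in A^*$ and concludes $Z\subset H$ (which suffices since Proposition~\ref{propGeneratedGroup} says the described subgroup is $\langle Z\rangle$), you propagate the coset map along generalized paths with inverse edges, an equivalent formulation of the same compatibility argument.
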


\begin{proof} Let $\A=(P,1,1)$ be the literal automaton of $X^*$ and set
  $\B_X=(R,1,1)$. By  Proposition~\ref{lemmaCompatible}, the
automaton $\B_X$ is reversible.

Let $Z$ be the bifix code generating the submonoid recognized by
$\B_X$.
To show the inclusion $X\subset Z$, consider a word $x\in X$. There is
a path from $1$ to $1$ labeled $x$ in $\A$, hence also in $\B_X$.
Since
the path in  $\A$ does
not pass by $1$ except at its ends and since
the class of $1$ modulo $\theta_X$ is reduced to $1$, the path in
$\B_X$ does not pass by $1$ except at its ends. Thus $x$ is in $Z$.

Let us finally show that the coset automaton describes the group
$H=\langle X\rangle$. By Proposition~\ref{propGeneratedGroup}, the subgroup 
described
by $\B_X$ is equal to $\langle Z\rangle$. Set $K=\langle Z\rangle$.
Since $X\subset Z$, we have $H\subset K$. To show the converse
inclusion, let us show by induction on the length of $w\in A^*$
that if, for $p,q\in P$, there is a path from the class of $p$ to the
class of $q$ in $\B_X$ with label $w$ then $Hpw=Hq$. By
Proposition~\ref{propTheta}, this holds for $w=1$. Next,
assume
that it is true for $w$ and consider $wa$ with $a\in A$. Assume
that there are states $p,q,r\in P$ such that 
there is a path from the class of $p$ to the class of $q$ in $\B_X$ with
label $w$,
and an edge from the class of $q$ to the class of $r$ in $\B_X$ with
the label $a$. By induction
hypothesis, we have $Hpw=Hq$. Next, by definition of $\B_X$, there
is an $s\equiv q\bmod \theta_X$ such that $s\cdot a\equiv
r\bmod\theta_X$.
If $sa\in P$, then $s\cdot a=sa$, and
by Proposition~\ref{propTheta}, we have $Hs=Hq$ and $Hsa=Hr$.
 Otherwise, $sa\in X\subset H$ and $s\cdot a=r=1$
because the class of $1$ is a singleton and thus $Hqa=Hsa=H=Hr$.
In both cases,  $Hpwa=Hqa=Hsa=Hr$.
This property shows that if $z\in Z$, then
$Hz=H$, that is $z\in H$. Thus $Z\subset H$ and finally $H=K$.
\end{proof}

\subsection{Proof of the main results}

We can now prove Theorem~\ref{basisTheorem}. The proof uses
Proposition~\ref{newLemma633}. We will also use the elementary
fact that if $X$ is a bifix code, and $x,y\in X$ with $x\ne y$, then
 $x$ cannot cancel completely with $y^{-1}$, which means
that $\rho(xy^{-1})$ cannot be a prefix of $x$ or a suffix of $y^{-1}$.
Indeed, if
$xy^{-1}$ is equivalent to a prefix of $x$, then $y$ is a suffix of
$x$
and if $xy^{-1}$ is equivalent to a suffix of $y^{-1}$ then $x$ is a
suffix
of $y$. A symmetric argument holds for $x^{-1}$ and $y$.
\\

\begin{proofof}{ of Theorem~\ref{basisTheorem}}
To prove the necessity of the condition, assume that for some
$w\in S$
the graph $E(w)$ contains a cycle $(a_1,b_1,\ldots,a_p,b_p,a_1)$
with $p\ge 2$, $a_i\in L(w)$ and $b_i\in R(w)$ for
$1\le i\le p$. 
Consider the bifix code $X=AwA\cap S$. Then
$a_1wb_1,a_2wb_1,\ldots,$ $a_pwb_p,a_1wb_p\in X$.
 But
\begin{displaymath}
a_1wb_1(a_2wb_1)^{-1}a_2wb_2\cdots a_pwb_p(a_1wb_p)^{-1}\equiv 1,
\end{displaymath}
contradicting the fact that $X$ is free.

Let us now show the converse. Assume that $S$ is acyclic
and let $X\subset S$ be a bifix code.
Set $Y=X\cup X^{-1}$. Let $y_1,\ldots,y_n\in Y$. We intend to
show that provided $y_iy_{i+1}\not\equiv 1$ for $1\le i<n$, 
we have $y_1\cdots y_n\not\equiv 1$.
We may assume $n\ge 3$.

We say that a sequence $(u_i,v_i,w_i)_{1\le i\le n}$ of elements of
the free group on $A$ is \emph{admissible} with respect to
$y_1,\ldots,y_n$
if the following conditions are satisfied (see Figure~\ref{figurey_i}):
\begin{enumerate}
\item[(i)] $y_i=u_iv_iw_i$ for $1\le
i\le n$,
\item[(ii)] $u_1=w_n=1$ and $v_1,v_n\ne 1$,
\item[(iii)]  $w_iu_{i+1}\equiv 1$ for $1\le i\le  n-1$.
\item[(iv)] For $1\le i<j\le n$, if $v_i,v_j\ne 1$ and $v_k=1$
for $i+1\le k\le j-1$, then $v_iv_j$ is reduced.
\end{enumerate}

Note that if the sequence $(u_i,v_i,w_i)_{1\le i\le n}$ is  admissible 
with respect to $y_1,\ldots,y_n$,
then $y_1\cdots y_n$ is equivalent to the word $v_1\cdots v_n$
 which is a reduced nonempty word. Thus, in particular
$y_1\cdots y_n\not\equiv 1$.

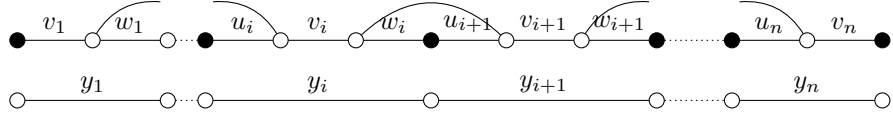
\begin{figure}[hbt]
\centering
\gasset{Nadjust=wh,AHnb=0}
\begin{picture}(100,10)(0,-5)
\node[Nframe=n](wh)(15,5){}
\node[fillcolor=black](l)(-5,0){}\node(ul)(5,0){}\node(wl)(15,0){}

\node[Nframe=n](0h)(20,5){}
\node[fillcolor=black](0)(20,0){}\node(u0)(30,0){}\node(v0)(40,0){}\node[fillcolor=black](w0)(50,0){}
\node(u1)(60,0){}\node(v1)(70,0){}\node[fillcolor=black](w1)(80,0){}\node[Nframe=n](1h)(80,5){}

\node[Nframe=n](rh)(90,5){}
\node[fillcolor=black](r)(90,0){}\node(ur)(100,0){}\node[fillcolor=black](wr)(110,0){}

\drawedge[curvedepth=2](ul,wh){}
\drawedge(l,ul){$v_1$}\drawedge(ul,wl){$w_1$}
\drawedge[dash={0.2 0.5}0](wl,0){}

\drawedge[curvedepth=2](0h,u0){}
\drawedge(0,u0){$u_i$}\drawedge(u0,v0){$v_i$}\drawedge(v0,w0){$w_i$}
\drawedge(w0,u1){$u_{i+1}$}\drawedge(u1,v1){$v_{i+1}$}\drawedge(v1,w1){$w_{i+1}$}

\drawedge[curvedepth=5](v0,u1){}\drawedge[curvedepth=2](v1,1h){}

\drawedge[dash={0.2    0.5}0](w1,r){}\drawedge(r,ur){$u_n$}\drawedge(ur,wr){$v_n$}
\drawedge[curvedepth=2](rh,ur){}

\node(lb)(-5,-8){}\node(lb')(15,-8){}
\node(0b)(20,-8){}\node(1b)(50,-8){}\node(2b)(80,-8){}
\node(3b)(90,-8){}\node(4b)(110,-8){}

\drawedge(lb,lb'){$y_1$}\drawedge(0b,1b){$y_i$}\drawedge(1b,2b){$y_{i+1}$}
\drawedge(3b,4b){$y_n$}

\drawedge[dash={0.2 0.5}0](lb',0b){}
\drawedge[dash={0.2 0.5}0](2b,3b){}
\end{picture}
\caption{The word $y_1\cdots y_{n}$.}\label{figurey_i}
\end{figure}
Let us show by induction on $n$ that for any $y_1,\ldots,y_n$
such that $y_iy_{i+1}\not\equiv 1$ for $1\le i\le n-1$, there exists an admissible
sequence with respect to $y_1\ldots,y_n$.

The property is true for $n=1$. Indeed, we take $u_1=w_1=1$.

Assume that the property is true for $n$. Among the possible
admissible sequences with respect to the $y_1,\ldots,y_n$, we choose
one such that $|v_n|$ is maximal.

Set $v_n=v'_nw'_n$ and
$y_{n+1}=u_{n+1}v_{n+1}$ with $|w'_n|=|u_{n+1}|$ maximal such that
$w'_nu_{n+1}\equiv 1$. Note that $v_{n+1}\ne 1$ since otherwise
$y_{n+1}$ would cancel completely with $y_n$.

If $v'_n\ne 1$, the sequence 
\begin{displaymath}
(1,v_1,w_1),\ldots,(u_{n-1},v_{n-1},w_{n-1}),(u_n,v'_n,w'_n),
(u_{n+1},v_{n+1},1)
\end{displaymath}
 is admissible with respect to $y_1,\ldots,y_{n+1}$.

Otherwise, let $i$ with $1\le i< n$ be the largest integer such that
$v_i\ne 1$. Observe that $w_i,w_{i+1},\ldots, w_{n-1}, w'_n$ are nonempty.
Indeed, if $w_j=1$ with $i\le j\le n-1$, then $u_{j+1}=1$ and thus
$y_{j+1}$ cancels completely with $y_{j+2}$. Next, if $v_n=w'_n=1$,
then $y_n$ cancels completely with $y_{n-1}$.

Assume that $y_i\in X$ (the other case is symmetric).

If $y_{n+1}\in X$ (and thus $n-i$ is odd), then $v_iv_{n+1}$ is
reduced because they are both in $A^*$
and $v_{n+1}\ne 1$ as we have already seen. Thus 
 the sequence 
\begin{displaymath}(1,v_1,w_1),\ldots,(u_{n-1},v_{n-1},w_{n-1}),(u_n,1,w'_n),
(u_{n+1},v_{n+1},1)
\end{displaymath}
 is admissible with respect to $y_1,\ldots,y_{n+1}$.

\begin{figure}[hbt]
\centering
\gasset{Nadjust=wh,AHnb=0}
\begin{picture}(50,45)
\node(1)(0,40){$u_iv_i$}\node(2)(30,40){$w_i=u_{i+1}^{-1}$}
\node(3)(0,30){$u_{i+2}=w_{i+1}^{-1}$}\node(4)(30,30){$w_{i+2}=u_{i+3}^{-1}$}

\node(5)(0,10){$u_n=w_{n-1}^{-1}$}\node(6)(30,10){$v_{n}=u_{n+1}^{-1}$}
\node(7)(0,0){$v_{n+1}^{-1}$}

\drawedge(1,2){}\drawedge(3,2){}
\drawedge(3,4){}\drawedge[dash={0.2 0.5}0](5,4){}
\drawedge(5,6){}\drawedge(7,6){}
\end{picture}
\caption{The graph $G(X)$.}\label{figureGX}
\end{figure}
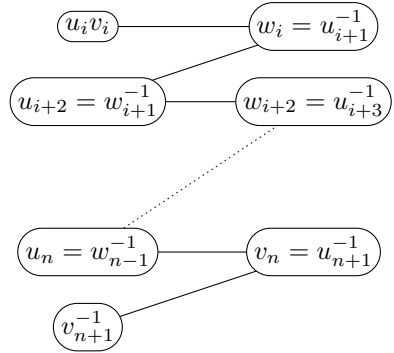
Otherwise, let $s$ be the longest common suffix of $u_iv_i$ and
$v_{n+1}^{-1}$.

There is a path in the incidence graph $G(X)$
from $u_iv_i$ to $v_{n+1}^{-1}$ (see Figure~\ref{figureGX}).
By Proposition~\ref{newLemma633}, $s$ is a proper suffix of
$u_iv_i,w_{i+1}^{-1},\ldots,w_{n-1}^{-1},v_{n+1}^{-1}$. 
This implies that $s^{-1}$ is a proper prefix of
$w_{i+1},\ldots,w_{n-1},v_{n+1}$.

It is not possible that $v_i$ is a suffix of $s$.
Indeed, this would imply that $v_i^{-1}$ is a proper prefix of
$w_{i+1},\ldots,w_{n-1},v_{n+1}$. But then we 
could change the $n-i+1$ last terms of the sequence
$(u_j,v_j,w_j)_{1\le j\le n}$ into $(u_i,1,v_iw_i)$,
$(u_{i+1}v_i^{-1},1,\rho(v_iw_{i+1})),\ldots,$
$(\rho(u_nv_ i^{-1}),v_iv_n,1)$ resulting in an admissible sequence with a
longer $v_n$. 

Thus $s$ is a proper suffix of $v_i$.
Since $s$ is a proper 
suffix of $v_i$ and $v_{n+1}^{-1}$, there are nonempty words $p,q\in A^*$ such
that $v_i=ps$ and $v_{n+1}^{-1}=qs$. Moreover, the word $pq^{-1}$ is
reduced
since $s$ is the longest common suffix of $v_i$ and $v_{n+1}^{-1}$.
Thus we can change the last $n-i+2$ terms of the sequence formed by
$(u_j,v_j,w_j)_{1\le j\le n-1}$ followed by
$(u_n,1,v_n),(u_{n+1},v_{n+1},1)$
into
\begin{displaymath}
(u_i,p,sw_i),
(u_{i+1}s^{-1},1,\rho(sw_{i+1})),\ldots,
(\rho(u_ns^{-1}),1,s v_n),(u_{n+1}s^{-1},q^{-1},1)
\end{displaymath}
 (see Figure~\ref{figurey_is}).
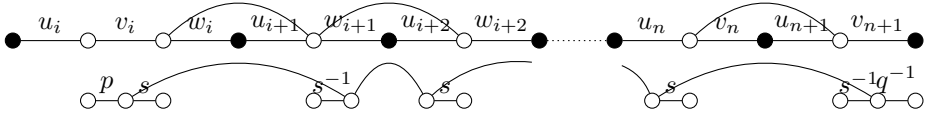
\begin{figure}[hbt]
\centering
\gasset{Nadjust=wh,AHnb=0}
\begin{picture}(120,15)
\node[fillcolor=black](ui)(0,8){}\node(vi)(10,8){}\node(wi)(20,8){}
\node[fillcolor=black](ui+1)(30,8){}\node(wi+1)(40,8){}
\node[fillcolor=black](ui+2)(50,8){}\node(wi+2)(60,8){}
\node[fillcolor=black](ui+3)(70,8){}
\node[fillcolor=black](un)(80,8){}\node(wn)(90,8){}
\node[fillcolor=black](un+1)(100,8){}\node(vn+1)(110,8){}\node[fillcolor=black](r)(120,8){}

\node(p)(10,0){}
\node(s1)(15,0){}\node(s1r)(20,0){}
\node(s2)(40,0){}\node(s2r)(45,0){}
\node(s3)(55,0){}\node(s3r)(60,0){}
\node[Nframe=n](ui+3b)(70,5){}
\node[Nframe=n](unb)(80,5){}
\node(s4)(85,0){}\node(s4r)(90,0){}
\node(s5)(110,0){}\node(s5r)(115,0){}
\node(q)(120,0){}

\drawedge(ui,vi){$u_i$}\drawedge(vi,wi){$v_i$}
\drawedge(wi,ui+1){$w_i$}\drawedge(ui+1,wi+1){$u_{i+1}$}
\drawedge(wi+1,ui+2){$w_{i+1}$}\drawedge(ui+2,wi+2){$u_{i+2}$}
\drawedge(wi+2,ui+3){$w_{i+2}$}
\drawedge[dash={0.2    0.5}0](ui+3,un){}
\drawedge(un,wn){$u_n$}\drawedge(wn,un+1){$v_n$}
\drawedge(un+1,vn+1){$u_{n+1}$}\drawedge(vn+1,r){$v_{n+1}$}
\drawedge[curvedepth=5](wi,wi+1){}\drawedge[curvedepth=5](wi+1,wi+2){}
\drawedge[curvedepth=5](wn,vn+1){}

\drawedge(p,s1){$p$}
\drawedge(s1,s1r){$s$}\drawedge(s2,s2r){$s^{-1}$}
\drawedge(s3,s3r){$s$}
\drawedge(s4,s4r){$s$}
\drawedge(s5,s5r){$s^{-1}$}
\drawedge(s5r,q){$q^{-1}$}
\drawedge[curvedepth=5](s1,s2r){}
\drawedge[curvedepth=5](s2r,s3){}
\drawedge[curvedepth=2](s3,ui+3b){}
\drawedge[curvedepth=1](unb,s4){}
\drawedge[curvedepth=5](s4,s5r){}
\end{picture}
\caption{The word $y_i\cdots y_{n+1}$.}\label{figurey_is}
\end{figure}
Since  the word $pq^{-1}$ is reduced, the new sequence is admissible. 

This shows that $y_1\cdots y_n\not\equiv 1$ for any sequence
$y_1,\ldots,y_n\in X\cup X^{-1}$ such that $y_iy_{i+1}\not\equiv 1$
for $1\le i<n$. Thus $X$ is free.
\end{proofof}
We now give a proof of Theorem~\ref{saturationTheorem}. It uses
 Proposition~\ref{lemmaBidet}.
\\

\begin{proofof}{ of Theorem~\ref{saturationTheorem}}
Let $S$ be an acyclic set and let
$X\subset S$ be a bifix code. We have to prove that $X^*\cap S=\langle
X\rangle\cap S$. Since $X^*\cap S\subset \langle
X\rangle\cap S$, we only need to prove the reverse inclusion.

  Consider the bifix code $Z$
  generating the submonoid re\-cog\-ni\-zed by the coset automaton
  $\B_X$ associated to $X$. Set $Y=Z\cap S$. By
  Theorem~\ref{basisTheorem},
$Y$ is a basis of $\langle Y\rangle$.

  By Proposition~\ref{lemmaBidet}, we have
  $X\subset Z$ and thus $X\subset Y$. 

  Since any reversible automaton is minimal and since the automaton
  $\B_X$ is reversible by Proposition~\ref{lemmaBidet}, it is equal to the
  minimal automaton of $Z^*$.  Let $K$ be the subgroup generated by
  $Z$. By Proposition~\ref{lemmaExercise612}, we have $K\cap A^*=Z^*$.

  This shows that
  \begin{displaymath}
    \langle X\rangle\cap S\subset K\cap S= K\cap A^*\cap S=Z^*\cap S=
    Y^*\cap S\subset Y^*. 
  \end{displaymath}
  The first inclusion holds because $X\subset Z$ implies $\langle
  X\rangle\subset K$.  The last equality follows from the fact that if
  $z_1\cdots z_n\in S$ with $z_1,\ldots ,z_n\in Z$, then each $z_i$ is
  in $S$ (because $S$ is factorial) and hence in $Z\cap S=Y$.  Thus $\langle X\rangle\cap S\subset
  Y^*$. Consider $x\in\langle X\rangle\cap S$. Then
  $x\equiv x_1\cdots x_n$ with $x_i\in X\cup X^{-1}$. But since $\langle X\rangle\cap S\subset
  Y^*$, we have also $x=y_1\cdots y_m$ with $y_i\in Y$. 
Since $X\subset Y$ and since $Y$ is free,
this forces $n=m$ and $x_i=y_i$. Thus all $x_i$ are in $X$
and $x$ is in $X^*$. This shows that
 $\langle X\rangle\cap S\subset X^*$ which was to be proved.
\end{proofof}

The proof of Theorem~\ref{basisTheorem} proves not only that bifix
codes
in acyclic sets are free, but also that, in a sense made
more precise below, the associated reductions are of low complexity.

We first define the \emph{heigth} of $w$ on $A\cup A^{-1}$ equivalent
to $1$ as the least
integer $h$ such that $w$ is a concatenation of words of the form
$w=uvu^{-1}$ where $u$ is a word on $A\cup A^{-1}$ and $v$
is a word of heigth $h-1$ equivalent to $1$. The empty word
is the only word equivalent to $1$ of
heigth $0$. 

We then define the height of an arbitrary word $w$ on $A\cup A^{-1}$
as the least integer $h$ such that $w=z_0v_1z_1\cdots v_nz_n$
with $z_0,\ldots,z_n$ equivalent to $1$ of height at most $h$
and $v_1\cdots v_n$ reduced. 

In this way, any word on $A\cup A^{-1}$ has finite height.
For example, the word $aa^{-1}cbb^{-1}$
has heigth $1$ and $aaa^{-1}bb^{-1}a^{-1}$ has height $2$.
The words of height $0$ are the reduced words.

\begin{proposition}
Let $S$ be an
acyclic set
and let $X\subset S$ be a bifix code. Any word $y=y_1\cdots y_n$
with $y_i\in X\cup X^{-1}$ for $1\le i\le n$
such that  $y_iy_{i+1}\not\equiv 1$
for $1\le i\le n-1$
has height at most $1$.
\end{proposition}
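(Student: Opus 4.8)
The plan is to extract a height-$\le 1$ certificate directly from the admissible sequence $(u_i,v_i,w_i)_{1\le i\le n}$ that the proof of Theorem~\ref{basisTheorem} constructs for $y_1,\dots,y_n$. Recall that this sequence satisfies $y_i\equiv u_iv_iw_i$, $u_1=w_n=1$, $w_iu_{i+1}\equiv1$, and that $v_1\cdots v_n$ is a reduced word. The first step is to argue that the sequence may be taken so that each $y_i=u_iv_iw_i$ is a factorization of the reduced word $y_i$ into three consecutive factors, i.e.\ is itself reduced. Granting this, the literal word $y=y_1\cdots y_n$ equals, as a string on $A\cup A^{-1}$, exactly $u_1v_1w_1\,u_2v_2w_2\cdots u_nv_nw_n$.

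Next I would exploit the condition $w_iu_{i+1}\equiv1$: since $w_i$ and $u_{i+1}$ are reduced words, this forces $u_{i+1}=w_i^{-1}$ letter by letter, so each junction block $w_iu_{i+1}$ is the literal string $w_iw_i^{-1}$, a single block of the form $uu^{-1}$ and hence a word equivalent to $1$ of height at most $1$. Using $u_1=w_n=1$, the word $y$ is therefore the concatenation $v_1(w_1w_1^{-1})v_2(w_2w_2^{-1})\cdots(w_{n-1}w_{n-1}^{-1})v_n$. Let $\tilde v_1,\dots,\tilde v_m$ denote the nonempty $v_i$ in order; their product is the reduced word $v_1\cdots v_n$. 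Collecting the junction blocks lying between two consecutive surviving factors into a word $z_j$, each $z_j$ is a concatenation of $uu^{-1}$ blocks, hence equivalent to $1$ of height at most $1$. Since $v_1,v_n\ne1$ while $u_1=w_n=1$, no cancelled material precedes $\tilde v_1$ or follows $\tilde v_m$, so $y=\tilde v_1z_1\tilde v_2\cdots z_{m-1}\tilde v_m$ is exactly a height-$\le1$ decomposition, which is the claim.

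The one real obstacle is the first step: verifying that the admissible sequence can be chosen with all $y_i=u_iv_iw_i$ reduced. This forces a second pass through the inductive construction in the proof of Theorem~\ref{basisTheorem}. In the base case and the two straightforward inductive cases the triples are visibly built from a prefix, an inner factor and a suffix of the relevant $y_i$. The delicate case is the one driven by the longest common suffix $s$, where the new triples involve $\rho(u_ks^{-1})$ and $\rho(sw_k)$. Here I would invoke Proposition~\ref{newLemma633}, which guarantees that $s$ is a proper suffix (resp. $s^{-1}$ a proper prefix) of the prefix/suffix pieces sitting on the relevant side of the incidence-graph path; this is precisely what makes each application of $\rho$ a no-op that merely slides the cut point inside $y_k$ rather than introducing genuine cancellation, so that reducedness is preserved. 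Carrying out this bookkeeping---tracking, for each index $k$, into which side of $y_k$ the block $s$ is absorbed, and checking that the forbidden situation in which $s$ would be cancelled on both sides never occurs (it would contradict $y_k$ being reduced)---is the only genuinely technical part of the argument.
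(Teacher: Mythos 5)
Your proposal is correct and takes essentially the same route as the paper, whose proof simply reads the decomposition $y=z_0v_1z_1\cdots z_{n-1}v_nz_n$ (with the junctions $w_iu_{i+1}=w_iw_i^{-1}$ furnishing the height-$\le 1$ words $z_j$ and $v_1\cdots v_n$ reduced) directly off the admissible sequence constructed in the proof of Theorem~\ref{basisTheorem}. The literal-factorization bookkeeping you single out as the one real obstacle is precisely what the paper leaves implicit, and your verification of it via Proposition~\ref{newLemma633} (the letter $s$ being a proper suffix, resp. $s^{-1}$ a proper prefix, of the relevant pieces, so each $\rho$ merely slides the cut point inside $y_k$) is sound.
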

\begin{proof}
 The proof of
Theorem~\ref{basisTheorem} shows that 
$y=z_0v_1z_1\cdots z_{n-1}v_nz_n$ where
\begin{enumerate}
\item[(i)] $z_0,\ldots,z_n$  have height at most $1$,
\item[(ii)] $v_1\cdots v_n$ is reduced.
\end{enumerate}
Thus $y$ has height at most $1$.
\end{proof}
\begin{example}
Let $X$ be as in Example~\ref{exampleBasisJulien}. The word
$bc(ac)^{-1}ab$, which reduces to $bb$,
has height $1$.
\end{example}

\bibliographystyle{plain}
\bibliography{acyclicConnectedTree}

\end{document}